\newtheorem{theorem}{Theorem}[section]
\newtheorem{corollary}[theorem]{Corollary}
\newtheorem{observation}[theorem]{Observation}
\newtheorem{question}{Question}
\newtheorem{proposition}[theorem]{Proposition}
\newtheorem{lemma}[theorem]{Lemma}
\theoremstyle{definition}
\newtheorem{remark}[theorem]{Remark}
\newcommand{\XK}{K'}
\newcommand{\XI}{I'}
\title{The Upper Clique Transversal Problem\thanks{A preliminary version appeared in the proceedings of the 49th International Workshop on Graph-Theoretic Concepts in Computer Science (WG 2023)~\cite{MR4657732}.}}
\author{Martin Milani\v c\\
\small FAMNIT and IAM, University of Primorska, Koper, Slovenia\\
\small \texttt{martin.milanic@upr.si}
\and
Yushi Uno\\
\small Graduate School of Informatics,
\small Osaka Metropolitan University,
\small Sakai, Osaka, Japan\\
\small \texttt{yushi.uno@omu.ac.jp}
}
\date{}
\begin{document}
\maketitle
\begin{abstract}
\begin{sloppypar}

A \emph{clique transversal} in a graph is a set of vertices intersecting all maximal cliques.
The problem of determining the minimum size of a clique transversal has received considerable attention in the literature.
In this paper, we initiate the study of the ``upper'' variant of this parameter, the \emph{upper clique transversal number}, defined as the maximum size of a minimal clique transversal.
We investigate this parameter from the algorithmic and complexity points of view, with a focus on various graph classes.
We show that the corresponding decision problem is \hbox{{\sf NP}-complete} in the classes of chordal graphs, chordal bipartite graphs, cubic planar bipartite graphs, and line graphs of bipartite graphs, but solvable in linear time in the classes of split graphs, proper interval graphs, and cographs, and in polynomial time for graphs of bounded cliquewidth.

\bigskip
\noindent{\bf Keywords:} clique transversal, upper clique transversal number, vertex cover, graph class, polynomial-time algorithm, {\sf NP}-completeness

\bigskip
\noindent{\bf MSC (2020):}
05C69, 
05C85, 
05C75, 
05C76, 
68Q25, 
68R10 
\end{sloppypar}
\end{abstract}

\section{Introduction}

A set of vertices of a graph $G$ that meets all maximal cliques of $G$ is called a \emph{clique transversal} in $G$.
Clique transversals in graphs have been studied by Payan in 1979~\cite{MR539710}, by Andreae, Schughart, and Tuza in 1991~\cite{MR1099264}, by Erd\H{o}s, Gallai, and Tuza in 1992~\cite{MR1189850}, and also extensively researched in the more recent literature (see, e.g.,~\cite{MR1201987,MR1375117,MR1413638,MR1423977,MR1737764,MR4213405,MR4264990,MR3875141,MR3350239,MR3325542,MR3131902,MR2203202}).
What most of these works have in common is that they focus on questions regarding the \textit{clique transversal number} of a graph, that is, the minimum size of a clique transversal of the graph.
For example, Chang, Farber, and Tuza showed in~\cite{MR1201987} that computing the clique transversal number for split graphs is {\sf NP}-hard, and Guruswami and Pandu Rangan showed in~\cite{MR1737764} that the problem is {\sf NP}-hard for cocomparability, planar, line, and total graphs, and solvable in polynomial time for Helly circular-arc graphs, strongly chordal graphs, chordal graphs of bounded clique size, and cographs.

In this paper, we initiate the study of the ``upper'' version of this graph invariant, the \textit{upper clique transversal number}, denoted by $\tau_c^+(G)$ and defined as the maximum size of a minimal clique transversal, where a clique transversal in a graph $G$ is said to be \emph{minimal} if it does not contain any other clique transversal.
The corresponding decision problem is defined as follows.

\begin{center}
\fbox{\parbox{.97\linewidth}{\noindent
\textsc{Upper Clique Transversal (UCT)}\\[.8ex]
\begin{tabular*}{.93\textwidth}{rl}
{\em Input:} & A graph $G$ and an integer $k$.\\
{\em Question:} & Does $G$ contain a minimal clique transversal $S$ such that $|S|\ge k$?
\end{tabular*}
}}
\end{center}

Our study contributes to the literature on upper variants of graph minimization problems, which already includes the upper vertex cover (also known as maximum minimal vertex cover; see~\cite{MR2772557,MR3717814,MR3399960}), upper feedback vertex set (also known as maximum minimal feedback vertex set; see~\cite{MR4322274,lampis2023parameterized}), upper edge cover (see~\cite{MR4075059}), upper domination (see~\cite{MR1088560,MR3807977,MR3767516}), and upper edge domination (see~\cite{MR4266848}).

\subsection*{Our results}
We provide a first set of results on the algorithmic complexity of \textsc{Upper Clique Transversal}.
Since clique transversals have been mostly studied in the class of chordal graphs and related classes, we also find it natural to first focus on this interesting graph class and its subclasses.
In this respect, we provide an {\sf NP}-completeness result as well as two very different linear-time algorithms.
We show that UCT is {\sf NP}-complete in the class of chordal graphs, but solvable in linear time in the classes of split graphs and proper interval graphs.
Note that the result for split graphs is in contrast with the aforementioned {\sf NP}-hardness result for computing the clique transversal number in the same class of graphs~\cite{MR1201987}.
In addition, we provide {\sf NP}-completeness proofs for three more subclasses of the class of perfect graphs, namely for chordal bipartite graphs, cubic planar bipartite graphs, and line graphs of bipartite graphs.
We also show that UCT is solvable in linear time in the class of cographs and in polynomial time in any class of graphs with bounded cliquewidth.

The diagram in \Cref{fig:graphclass} summarizes the relationships between various graph classes studied in this paper and indicates some boundaries of tractability of the UCT problem.
We define those graph classes in the corresponding later sections in the paper.
For further background and references on graph classes, we refer to~\cite{MR1686154}.

\begin{figure}[htb]
 	\centering
 	\includegraphics[width=0.85\textwidth]{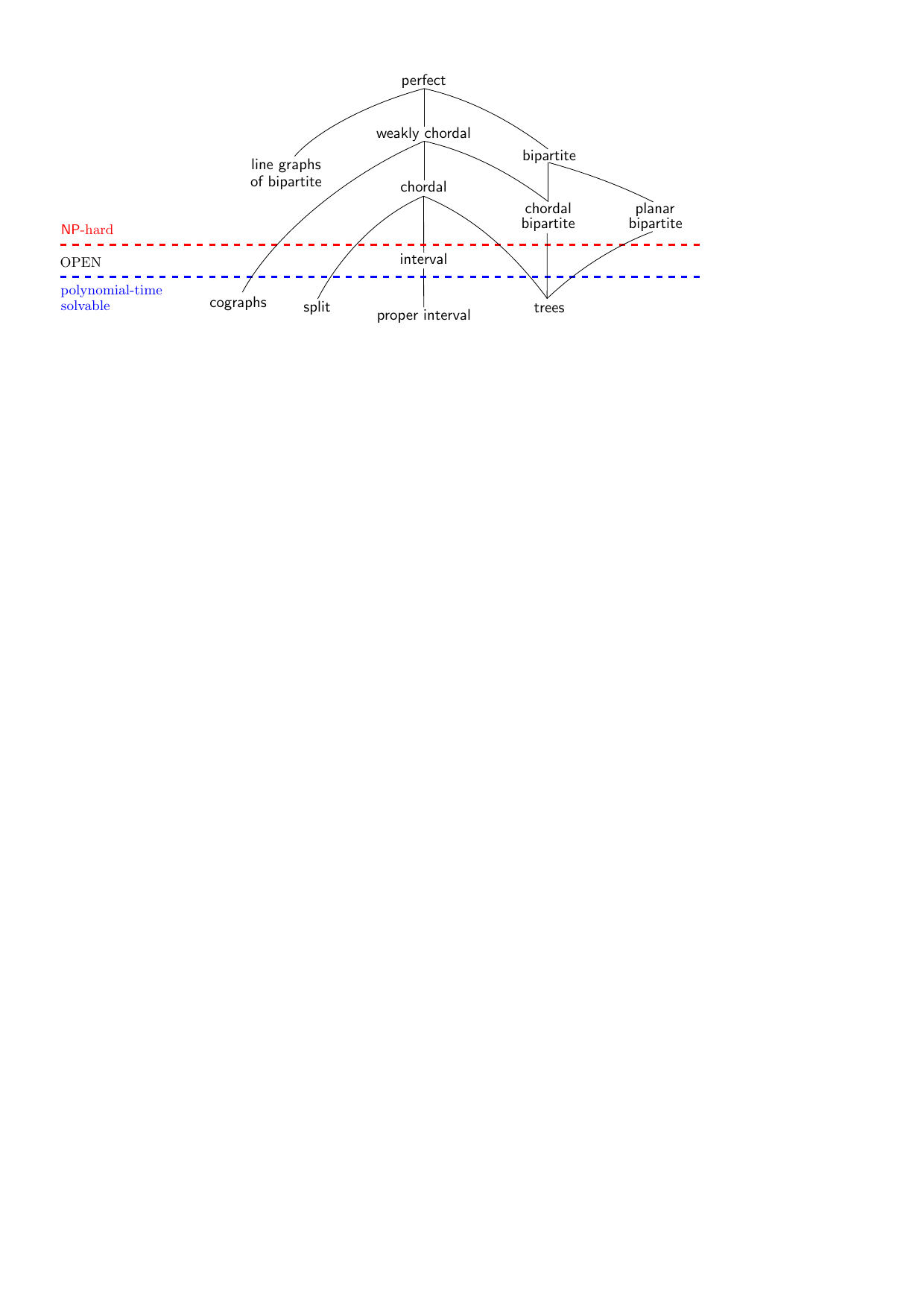}
 	\caption{The complexity of UCT in various graph classes studied in this paper.}
 	\label{fig:graphclass}
\end{figure}

\subsection*{Our approach}

We identify and make use of a number of connections between the upper clique transversal number and other graph parameters.
For example, several of our {\sf NP}-completeness proofs are based on the fact that for triangle-free graphs without isolated vertices, minimal clique transversals are exactly the minimal vertex covers, and they are closely related with minimal edge covers via the line graph operator.
In particular, if $G$ is a triangle-free graph without isolated vertices, then the upper clique transversal number of $G$ equals the upper vertex cover number of $G$, that is, the maximum size of a minimal vertex cover.

Since the upper vertex cover number of a graph $G$ plus the independent domination number of $G$ equals the order of $G$, there is also a connection with the independent dominating set problem.
Let us note that, along with a linear-time algorithm for computing a minimum independent set in a tree~\cite{MR0485473}, the above observations suffice to justify the polynomial-time solvability of the upper clique transversal problem on trees, as indicated in \Cref{fig:graphclass}.
They are also instrumental in the {\sf NP}-completeness proofs for the classes of chordal bipartite graphs and cubic planar bipartite graphs.

The {\sf NP}-completeness proofs for the classes of chordal graphs and line graphs of bipartite graphs are based on a reduction from {\sc Spanning Star Forest}, the problem of computing a spanning subgraph with as many edges as possible that consists of disjoint stars; this problem, in turn, is known to be closely related to the dominating set problem.

The linear-time algorithm for computing the upper clique transversal number of proper interval graphs relies on a linear-time algorithm for the maximum induced matching problem in bipartite permutation graphs due to Chang~\cite{MR2024264}.
More precisely,  we prove that the upper clique transversal number of a given graph cannot exceed the maximum size of an induced matching of a derived bipartite graph, the \emph{vertex-clique incidence graph}, and show, using new insights on the properties of the matching computed by Chang's algorithm, that for proper interval graphs, the two quantities are the same.

Our approach in the case of split graphs is based on a characterization of minimal clique transversals of split graphs.
A clique transversal that is an independent set is also called a \emph{strong independent set} (or \emph{strong stable set}; see~\cite{MR4273625} for a survey).
It is not difficult to see that every strong independent set is a minimal clique transversal.
We show that every split graph has a maximum minimal clique transversal that is independent (and hence, a strong independent set).
In particular, this results implies that within the class of split graphs, the independence number is an upper bound for the upper clique transversal number.

\begin{sloppypar}
The linear-time algorithm for UCT in the class of cographs is based on the recursive structure of cographs and the fact that, within the class of cographs, the upper clique transversal number coincides with the independence number.
Finally, we complement our polynomial results by observing that UCT can be formulated in {\sf MSO$_1$} logic, which immediately leads to a polynomial-time algorithm for computing the upper clique transversal number of graphs with bounded cliquewidth, by applying a metatheorem due to  Courcelle, Makowsky, and Rotics~\cite{MR1739644}.
\end{sloppypar}

\subsection*{Structure of the paper}

In \Cref{sec:prelim} we introduce the relevant graph theoretic background.
Hardness results are presented in \Cref{sec:hardness}.
Linear-time algorithms for UCT in the classes of split graphs, proper interval graphs, and cographs are developed in \Cref{sec:split-graphs,sec:PIGs,sec:cographs}, respectively.
In \Cref{sec:bounded-cliquewidth}, we show that UCT can be solved in polynomial time in any class of graphs with bounded cliquewidth.
We conclude the paper with a number of open questions in \Cref{sec:conclusion}.

\medskip
\begin{sloppypar}
The paper contains detailed proofs of all the results presented in the conference version~\cite{MR4657732}, as well as new results, namely the \textsf{NP}-completeness of UCT in the class of cubic planar bipartite graphs and polynomial-time algorithms for the classes of cographs and graphs with bounded cliquewidth.
It also contains a more extensive concluding discussion, including several open questions and remarks on the complexity of UCT in relation to width parameters other than cliquewidth, namely for graph classes having bounded tree-independence number, mim-width, sim-width, or twin-width.
\end{sloppypar}

\section{Preliminaries}\label{sec:prelim}

Throughout the paper, graphs are assumed to be finite, simple, and undirected.
We use standard graph theory terminology, following West~\cite{MR1367739}.
A graph $G$ with vertex set $V$ and edge set $E$ is often denoted by $G=(V, E)$; we write $V(G)$ and $E(G)$ for $V$ and $E$, respectively.
The set of vertices adjacent to a vertex $v\in V$ is the \emph{neighborhood} of $v$, denoted $N(v)$; its cardinality is the \emph{degree} of $v$, denoted $\deg(v)$.
A graph $G$ is \emph{cubic} if every vertex has degree~$3$.
The \emph{closed neighborhood} is the set $N[v]$, defined as $N(v)\cup \{v\}$.
An \emph{independent set} in a graph is a set of pairwise non-adjacent vertices; a \emph{clique} is a set of pairwise adjacent vertices.
An independent set (resp.,~clique) in a graph $G$ is \emph{maximal} if it is not contained in any other independent set (resp.,~clique).
A \emph{clique transversal} in a graph is a subset of vertices that intersects all the maximal cliques of the graph.
A \textit{dominating set} in a graph $G = (V,E)$ is a set $S$ of vertices such that every vertex not in $S$ has a neighbor in $S$.
An \textit{independent dominating set} is a dominating set that is also an independent set.
The \textit{(independent) domination number} of a graph $G$ is the minimum size of an (independent) dominating set in $G$.
Note that a set $S$ of vertices in a graph $G$ is an independent dominating set if and only if $S$ is a maximal independent set.
In particular, the independent domination number of a graph is a well-defined invariant leading to a decision problem called {\sc Independent Dominating Set}.

The \emph{clique number} of $G$ is denoted by $\omega(G)$ and defined as the maximum size of a clique in $G$.
An \emph{upper clique transversal} of a graph $G$ is a minimal clique transversal of maximum size.
The \emph{upper clique transversal number} of a graph $G$ is denoted by $\tau_c^+(G)$ and defined as the maximum size of a minimal clique transversal in $G$.
A \emph{vertex cover} in $G$ is a set $S\subseteq V(G)$ such that every edge $e\in E(G)$ has at least one endpoint in $S$.
A vertex cover in $G$ is \emph{minimal} if it does not contain any other vertex cover.
These notions are illustrated in~\Cref{fig:uct}.
Note that if $G$ is a triangle-free graph without isolated vertices, then the maximal cliques of $G$ are exactly its edges, and hence the clique transversals of $G$ coincide with its vertex covers.

\begin{figure}[htb]
 	\centering
 	\includegraphics[width=.90\textwidth]{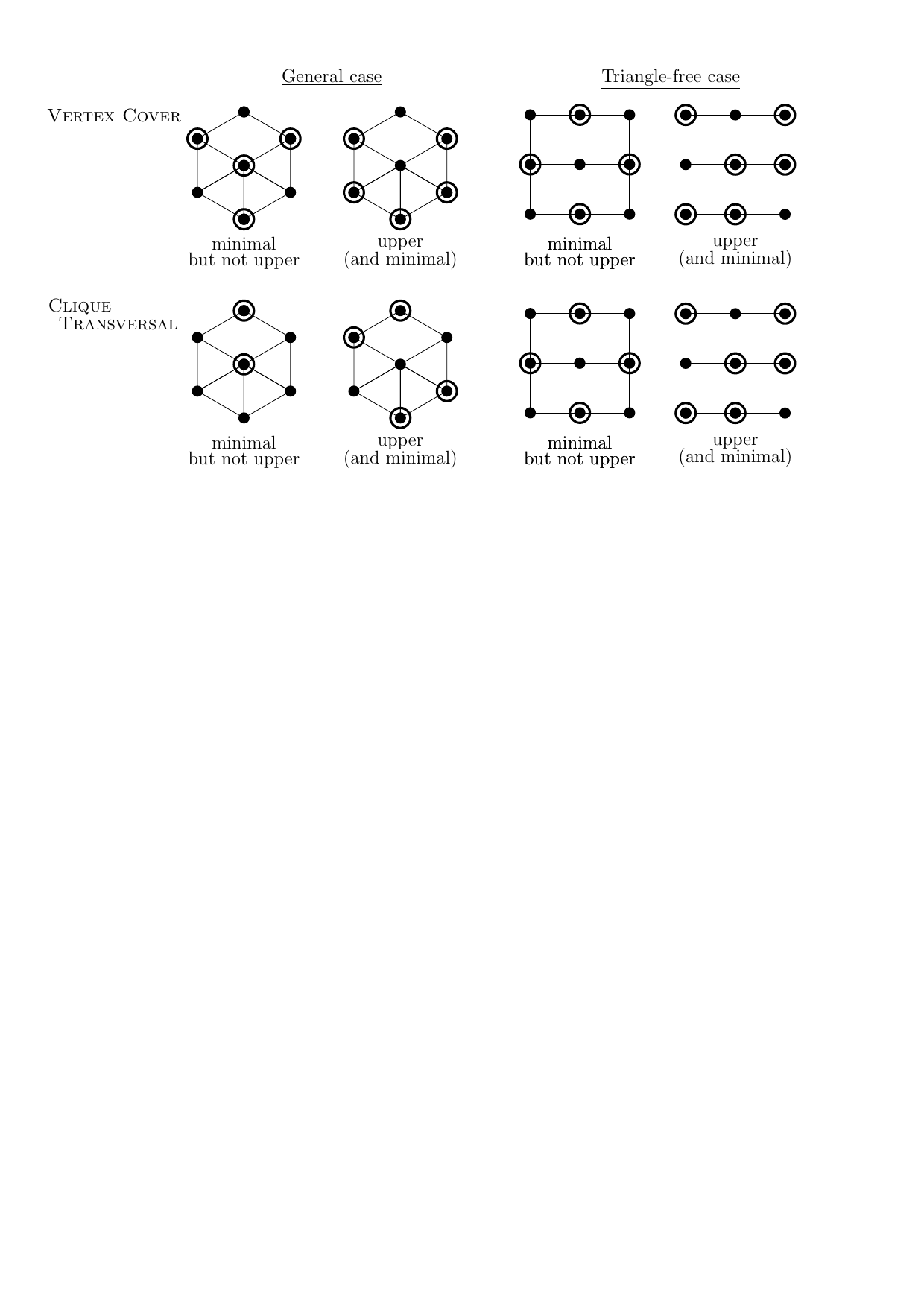}
 	\caption{Upper clique transversal and related notions.}
 	\label{fig:uct}
\end{figure}

\section{Intractability of UCT for some graph classes}\label{sec:hardness}

In this section we prove that {\sc Upper Clique Transversal} is {\sf NP}-complete in the classes of chordal graphs, chordal bipartite graphs, cubic planar bipartite graphs, and line graphs of bipartite graphs.
First, let us note that for the class of all graphs, we do not know whether the problem is in {\sf NP}.
If $S$ is a minimal clique transversal in $G$ such that $|S|\ge k$, then a natural way to verify this fact would be to certify separately that $S$ is a clique transversal and that it is a minimal one.
Assuming that $S$ is a clique transversal, one can certify minimality simply by exhibiting for each vertex $u\in S$ a maximal clique $C$ in $G$ such that $C\cap S = \{u\}$.
However, unless {\sf P} = {\sf NP}, we cannot verify the fact that $S$ is a clique transversal in polynomial time.
This follows from a result of Zang~\cite{MR1344757}, showing that it is {\sf co-NP}-complete to check, given a weakly chordal graph $G$ and an independent set $S$, whether $S$ is a clique transversal in $G$.
A graph $G$ is \textit{weakly chordal} if neither $G$ nor its complement contain an induced cycle of length at least five.

We do not know whether {\sc Upper Clique Transversal} is in {\sf NP} when restricted to the class of weakly chordal graphs.
However, for their subclasses chordal graphs and chordal bipartite graphs, membership of UCT in {\sf NP} is a consequence of the following proposition.

\begin{proposition}\label{prop:poly-many-maximal-cliques}
Let $\mathcal{G}$ be a graph class such that every graph $G\in \mathcal{G}$ has at most polynomially many maximal cliques.
Then, \textsc{Upper Clique Transversal} is in {\sf NP} for graphs in $\mathcal{G}$.
\end{proposition}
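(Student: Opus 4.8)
The plan is to show that $S\subseteq V(G)$ being a minimal clique transversal can be verified in polynomial time whenever $G$ has at most polynomially many maximal cliques, so that such an $S$ with $|S|\ge k$ serves as a certificate. First I would observe that membership in $\mathsf{NP}$ requires a certificate of polynomial size together with a polynomial-time verification procedure. The natural certificate is the set $S$ itself, which has size at most $|V(G)|$ and hence is polynomially bounded; the content of the proof is therefore entirely in establishing polynomial-time verifiability of the two conditions ``$S$ is a clique transversal'' and ``$S$ is minimal.''

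The key step is to use the hypothesis that $G$ has at most polynomially many maximal cliques. Under this assumption, I would invoke an algorithm that enumerates all maximal cliques of $G$ in time polynomial in the input size and the number of maximal cliques, for instance the classical algorithm of Tsukiyama, Ide, Ariyoshi, and Shirakawa (or Bron--Kerbosch); since the number of maximal cliques is polynomial by hypothesis, the total enumeration time is polynomial. Let $\mathcal{C}_1,\dots,\mathcal{C}_m$ denote the enumerated maximal cliques, with $m$ polynomial in $|V(G)|$. To check that $S$ is a clique transversal, I simply verify that $\mathcal{C}_i\cap S\neq\emptyset$ for every $i$, which takes polynomial time. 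To check minimality, I use the characterization already noted in the excerpt: assuming $S$ is a clique transversal, it is minimal if and only if for every vertex $u\in S$ there is a maximal clique $\mathcal{C}$ with $\mathcal{C}\cap S=\{u\}$. Equivalently, for each $u\in S$, the set $S\setminus\{u\}$ fails to be a clique transversal, i.e.\ some $\mathcal{C}_i$ satisfies $\mathcal{C}_i\cap S=\{u\}$. With the full list of maximal cliques available, this condition is checked in polynomial time by scanning, for each $u\in S$, all cliques $\mathcal{C}_i$ and testing whether $\mathcal{C}_i\cap S=\{u\}$.

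I do not anticipate any genuine obstacle here, as the argument is a routine combination of polynomial-time clique enumeration with the stated minimality characterization; the only point requiring mild care is confirming that the chosen enumeration algorithm indeed runs in time polynomial in both the input and the output (the number of maximal cliques), so that the polynomial bound on the number of maximal cliques guarantees overall polynomial running time. Putting these together, the verifier accepts $(G,k,S)$ precisely when $S$ is a clique transversal, $S$ is minimal, and $|S|\ge k$, each checkable in polynomial time, which establishes that \textsc{Upper Clique Transversal} is in $\mathsf{NP}$ for graphs in $\mathcal{G}$.
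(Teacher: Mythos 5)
Your proposal is correct and follows essentially the same route as the paper: use $S$ itself as the certificate, enumerate all maximal cliques with an output-polynomial algorithm (the paper cites Makino--Uno), verify the transversal property directly, and verify minimality by checking that each $u\in S$ has a maximal clique $C$ with $C\cap S=\{u\}$. The only small caveat is your mention of Bron--Kerbosch, which is not output-polynomial in general, but you correctly flag that the enumeration algorithm must be output-sensitive and your alternative (Tsukiyama et al.) suffices.
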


\begin{proof}
Given a graph $G\in \mathcal{G}$ and an integer $k$, a polynomially verifiable certificate
of the existence of a minimal clique transversal $S$ in $G$ such that $|S|\ge k$ is any such set $S$.
Indeed, in this case we can enumerate all maximal cliques of $G$ in polynomial time by using any of
the output-polynomial algorithms for this task~(e.g.,~\cite{MR2159537}). In particular, we
can verify that $S$ is a clique transversal of $G$ in polynomial time. We can also verify minimality in polynomial time, by determining whether for each vertex $u\in S$ there exists a maximal clique $C$ in $G$ such that $C\cap S = \{u\}$.
\end{proof}

A \textit{star} is a graph that has a vertex that is adjacent to all other vertices, and there are no other edges.
A \textit{spanning star forest} in a graph $G = (V,E)$ is a spanning subgraph $(V,F)$ consisting of vertex-disjoint stars.
Some of our hardness results will make use of a reduction from {\sc Spanning Star Forest}, the problem that takes as input a graph $G$ and an integer $\ell$, and the task is to determine whether $G$ contains a spanning star forest $(V,F)$ such that $|F|\ge \ell$.

\begin{theorem}\label{thm:ssf-bip-delta-two}
\textsc{Spanning Star Forest} is {\sf NP}-complete in the class of bipartite graphs with minimum degree at least $2$.
\end{theorem}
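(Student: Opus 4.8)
The plan is to first dispatch membership in {\sf NP} and then establish {\sf NP}-hardness by reducing from {\sc Dominating Set}, which is well known to be {\sf NP}-complete. Membership is immediate: a spanning star forest $(V,F)$ with $|F|\ge \ell$ is itself a polynomially verifiable certificate, since one checks in linear time that $(V,F)$ is spanning, that each of its connected components is a star (equivalently, has at most one vertex of degree larger than one), and that $|F|\ge\ell$. For the hardness, I would lean on the standard identity relating spanning star forests to domination: for every graph $H$ on $n$ vertices, the maximum number of edges in a spanning star forest of $H$ equals $n-\gamma(H)$, where $\gamma(H)$ denotes the domination number. Indeed, the centres of the stars of any spanning star forest form a dominating set, and conversely every dominating set $D$ yields a spanning star forest with exactly $|D|$ stars by making each vertex of $D$ a centre and attaching every other vertex to a neighbour in $D$; the number of edges is then $n$ minus the number of stars. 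Consequently, asking whether $H$ has a spanning star forest with at least $\ell$ edges is the same as asking whether $\gamma(H)\le n-\ell$, and it suffices to prove that {\sc Dominating Set} is {\sf NP}-complete in the class of bipartite graphs of minimum degree at least~$2$.

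For the reduction I would start from a graph $G=(V,E)$ and first preprocess it so that it has no isolated vertices (isolated vertices lie in every dominating set and can be removed while decreasing the target accordingly); this guarantees $\delta(G)\ge 1$. As a backbone I would use the closed-neighbourhood bipartite incidence graph, with one side a copy of $V$ and the other side a set $\{r_v:v\in V\}$, joining $u$ to $r_v$ exactly when $u\in N_G[v]$. This construction is bipartite, and since $\deg(u)=\deg(r_u)=\deg_G(u)+1\ge 2$, it already has minimum degree at least~$2$. Selecting $u$-vertices to dominate the $r$-side corresponds precisely to dominating $G$; the difficulty is that the $r$-side can also be used to dominate the $u$-side ``for free'', so that the domination number of the plain incidence graph can drop strictly below $2\gamma(G)$ and, crucially, is \emph{not} a clean function of $\gamma(G)$ (small examples such as $P_4$ already show this). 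To neutralise this, I would attach to each vertex a small bipartite forcing gadget of minimum degree $2$ (for instance a short even cycle threaded through the vertex) whose role is to pin down the domination cost of one side at a fixed amount per vertex and to remove the incentive to dominate from the ``wrong'' side, so that the total domination number becomes $\gamma(G)+c|V|$ for a constant $c$ depending only on the gadget.

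The routine direction is the upper bound: any dominating set $D$ of $G$ lifts to a dominating set of the augmented graph $H$ by selecting the corresponding incidence vertices together with one prescribed vertex in each gadget, giving $\gamma(H)\le \gamma(G)+c|V|$. The main obstacle, and the step that dictates the choice of gadget, is the matching lower bound $\gamma(G)\le\gamma(H)-c|V|$: I must show that no dominating set of $H$ can undercut this value by exploiting cross-side domination. The plan here is a normalisation (exchange) argument that rewrites an arbitrary minimum dominating set of $H$ into a canonical one in which every gadget is dominated at exactly its forced cost and in which the remaining choices project to a dominating set of $G$ of size at most $\gamma(H)-c|V|$. Keeping the gadget simple enough that this exchange stays clean, while remaining bipartite and of minimum degree at least~$2$, is the delicate point, and checking that the projection is genuinely dominating in $G$ (so that the two inequalities meet) is where the real work lies. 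Once the equality $\gamma(H)=\gamma(G)+c|V|$ is established, translating back through the spanning-star-forest identity and setting the threshold $\ell$ accordingly completes the reduction.
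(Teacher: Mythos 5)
Your first two steps match the paper exactly: membership in {\sf NP} via the star forest itself as certificate, and the identity that a graph on $n$ vertices has a spanning star forest with at least $\ell$ edges if and only if it has a dominating set of size at most $n-\ell$. The divergence, and the problem, is in how you establish that {\sc Dominating Set} is {\sf NP}-hard on bipartite graphs of minimum degree at least $2$. The paper does not construct anything here: it simply invokes the known {\sf NP}-completeness of {\sc Dominating Set} for (chordal) bipartite graphs due to M\"uller and Brandst\"adt, and observes that the instances produced by that reduction already have no vertices of degree zero or one. You instead attempt a fresh reduction from general {\sc Dominating Set} via the closed-neighbourhood incidence graph, and this is where the proposal has a genuine gap.

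Concretely, you correctly identify that the plain incidence construction fails because $r$-vertices can dominate $u$-vertices ``for free,'' so $\gamma(H)$ is not a clean function of $\gamma(G)$. Your fix is ``a small bipartite forcing gadget of minimum degree $2$'' attached to each vertex, with the claim that the total domination number becomes $\gamma(G)+c|V|$. But the gadget is never defined, the constant $c$ is never pinned down, and the exchange argument that would prove the lower bound $\gamma(H)\ge\gamma(G)+c|V|$ is only described as a plan; you yourself flag that ``checking that the projection is genuinely dominating in $G$ \ldots is where the real work lies.'' A short even cycle through a vertex, for instance, does not obviously prevent gadget vertices from contributing to dominating the incidence vertices, so the asserted equality is not self-evident and the argument as written cannot be checked. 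As it stands the hardness half of the proof is a proof sketch with its central lemma missing. The simplest repair is the paper's route: replace the whole gadget construction with a citation to an existing {\sf NP}-completeness result for {\sc Dominating Set} on bipartite graphs whose constructed instances have minimum degree at least $2$, and then your (correct) translation through the spanning-star-forest identity finishes the proof. If you insist on a self-contained reduction, you must exhibit the gadget explicitly and prove both inequalities for $\gamma(H)$.
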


\begin{proof}
Membership in {\sf NP} is clear.
\textsc{Spanning Star Forest} is {\sf NP}-complete due to its close relationship with {\sc Dominating Set}, the problem that takes as input a graph $G$ and an integer $k$, and the task is to determine whether $G$ contains a dominating set $S$ such that $|S|\le k$.
The connection between the spanning star forests and dominating sets is as follows: a graph $G$ has a spanning star forest with at least $\ell$ edges if and only if $G$ has a dominating set with at most $|V|-\ell$ vertices (see~\cite{MR1887943,MR2421073}).
{\sc Dominating Set} is known to be {\sf NP}-complete in the class of bipartite graphs (see, e.g.,~\cite{MR761623}) and even in the class of chordal bipartite graphs, as shown by M\"{u}ller and Brandst\"{a}dt~\cite{MR918093}.
The graphs constructed in the  {\sf NP}-hardness reduction from~\cite{MR918093} do not contain any vertices of degree zero or one, hence the claimed result follows.
\end{proof}

We present the hardness results in increasing order of difficulty of the proofs, starting with two subclasses of the class of bipartite graphs.
A \emph{chordal bipartite} graph is a bipartite graph in which all induced cycles are of length four.

\begin{theorem}\label{thm:NP-c-chordal-bip}
\textsc{Upper Clique Transversal} is {\sf NP}-complete in the classes of chordal bipartite graphs and cubic planar bipartite graphs.
\end{theorem}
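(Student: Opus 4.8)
The plan is to treat both classes uniformly: first reduce \textsc{Upper Clique Transversal} to an independent-domination question via the correspondence between minimal clique transversals and vertex covers, and then establish the required {\sf NP}-hardness on each target class separately.

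First, membership in {\sf NP}. Both chordal bipartite graphs and cubic planar bipartite graphs are triangle-free, so the maximal cliques of such a graph are precisely its edges together with its isolated vertices; a graph on $n$ vertices therefore has at most $\binom{n}{2}+n$ maximal cliques. Hence \Cref{prop:poly-many-maximal-cliques} applies and \textsc{Upper Clique Transversal} lies in {\sf NP} on both classes.

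For hardness I would exploit the connections recorded in the introduction. If $G$ is triangle-free without isolated vertices, then the minimal clique transversals of $G$ are exactly its minimal vertex covers, so $\tau_c^+(G)$ equals the upper vertex cover number of $G$. Taking complements turns minimal vertex covers into maximal independent sets and back, whence $\tau_c^+(G)=|V(G)|-i(G)$, where $i(G)$ denotes the independent domination number (the minimum size of a maximal independent set). Consequently, on any subclass of triangle-free graphs without isolated vertices the question whether $\tau_c^+(G)\ge k$ is equivalent to whether $i(G)\le |V(G)|-k$; that is, \textsc{Upper Clique Transversal} and \textsc{Independent Dominating Set} are polynomially equivalent there. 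It thus suffices to prove that \textsc{Independent Dominating Set} is {\sf NP}-complete on chordal bipartite graphs and on cubic planar bipartite graphs, with instances having no isolated vertices.

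For the chordal bipartite case I would build on the {\sf NP}-hardness of \textsc{Dominating Set} on chordal bipartite graphs due to M\"uller and Brandst\"adt~\cite{MR918093} (already invoked in \Cref{thm:ssf-bip-delta-two}). Since $i(G)$ and the ordinary domination number can differ, the point to verify is that some minimum dominating set in the constructed graph can be chosen independent---either by inspecting that the M\"uller--Brandst\"adt reduction already enjoys this property, or by attaching to each vertex a small chordal bipartite gadget (such as a pendant $C_4$) that forces independence while preserving chordal bipartiteness, keeping the graph free of isolated vertices, and shifting the optimum by a controlled additive amount. The cubic planar bipartite case is the main obstacle, because the target class is extremely rigid: one must simultaneously enforce $3$-regularity, planarity, and bipartiteness. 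Here I would reduce from an independent-domination (or domination, combined with the independence trick above) problem on cubic planar graphs, replacing every vertex and every edge by planar bipartite gadgets of degree exactly three that encode the ``in/out'' choice and transmit the domination constraints, using subdivision-type gadgets to achieve bipartiteness and degree-filling attachments to restore $3$-regularity. The delicate steps are confirming planarity of the assembled graph, checking that every vertex ends up with degree exactly three, and proving that $i(\cdot)$ shifts by an additive constant determined solely by the gadget count, so that the threshold $k$ transfers correctly.
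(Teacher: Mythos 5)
Your reduction is exactly the one the paper uses: membership in {\sf NP} via \Cref{prop:poly-many-maximal-cliques}, and hardness via the equivalence, in triangle-free graphs without isolated vertices, between minimal clique transversals, minimal vertex covers, and complements of maximal independent sets, which gives $\tau_c^+(G)=|V(G)|-i(G)$ and hence a reduction from \textsc{Independent Dominating Set}. That part is correct and matches the paper.

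The gap is in the base hardness results you then need. The paper closes the argument simply by citing that \textsc{Independent Dominating Set} is already known to be {\sf NP}-complete on chordal bipartite graphs (Damaschke, M\"uller, and Kratsch~\cite{MR1081460}) and on cubic planar bipartite graphs (Loverov and Orlovich~\cite{MR4112850}); you do not invoke either result and instead propose to re-derive them. Your sketches do not do so. For the chordal bipartite case, {\sf NP}-hardness of \textsc{Dominating Set}~\cite{MR918093} does not transfer to independent domination: $\gamma(G)$ and $i(G)$ are genuinely different parameters (independent domination is a min--max quantity), ``inspecting that the reduction already enjoys this property'' is not an argument, and a pendant-$C_4$ gadget neither obviously forces a minimum \emph{maximal independent set} to behave as claimed nor comes with any verification of how it shifts $i(G)$. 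For the cubic planar bipartite case you give only a generic description of vertex/edge gadgets with none of the gadgets specified and none of the claimed properties (planarity, $3$-regularity, the additive shift of $i(\cdot)$) checked; this is essentially the entire content of a published {\sf NP}-completeness proof and cannot be left at this level of detail. Either cite the two known results, or supply and verify the constructions in full.
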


\begin{proof}
Proposition~\ref{prop:poly-many-maximal-cliques} implies that UCT is in {\sf NP} when restricted to any class of bipartite graphs.
Let $\mathcal G$ be either the classes of chordal bipartite graphs or cubic planar bipartite graphs.
To prove {\sf NP}-hardness, we make a reduction from {\sc Independent Dominating Set} in $\mathcal G$, the problem that takes as input a  graph $G\in \mathcal G$ and an integer $k$, and the task is to determine whether $G$ contains a maximal independent set $I$ such that $|I|\le \ell$.
This problem is {\sf NP}-complete, as proved by Damaschke, M\"uller, and Kratsch~\cite{MR1081460} for the class of chordal bipartite graphs, and by  Loverov and Orlovich~\cite{MR4112850} for the class of chordal bipartite graphs.
We may assume without loss of generality that the input graph does not have any isolated vertices.
Then, given a set $I\subseteq V(G)$, the following statements are equivalent:
\begin{enumerate}[(i)]
\item $I$ is a (maximal) independent set in $G$,
\item $V(G)\setminus I$ is a (minimal) vertex cover in $G$, and
\item $V(G)\setminus I$  is a (minimal) clique transversal in $G$.
\end{enumerate}
Statements (i) and (ii) are equivalent for any graph, while the equivalence between statements (ii) and (iii) follows from the fact that the maximal cliques in $G$ are precisely its edges, since $G$ is triangle-free.
It follows that $G$ has a maximal independent set $I$ such that $|I|\le \ell$ if and only if
$G$ has a minimal clique transversal $S$ such that $|S|\ge k$ where $k = |V(G)|-\ell$.
This completes the proof.
\end{proof}

We next consider the class of line graphs of bipartite graphs.
The \emph{line graph} of a graph $G$ is the graph $H$ with
$V(H) = E(G)$ in which two distinct vertices are adjacent if and only if they share an endpoint as edges in~$G$.

\begin{lemma}\label{lem:clique-line}
Let $G$ be a triangle-free graph with minimum degree at least $2$ and let $H$ be the line graph of $G$.
Then, the maximal cliques in $H$ are exactly the sets $E_v$ for $v\in V(G)$, where
$E_v$ is the set of edges in $G$ that are incident with $v$.
\end{lemma}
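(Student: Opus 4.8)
The plan is to establish the equality of two families of sets by proving both inclusions separately, and throughout the two hypotheses on $G$ will do all the work: that $G$ is triangle-free and that every vertex has degree at least two. First I would record the easy direction, that each $E_v$ is a clique of $H$: any two edges incident with $v$ share the endpoint $v$, hence are adjacent as vertices of $H$.

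Next I would argue that each $E_v$ is in fact a maximal clique. Suppose, for contradiction, that some edge $e \in E(G) \setminus E_v$ is adjacent in $H$ to every element of $E_v$. Since $\deg(v) \ge 2$, I can choose two distinct edges $vw_1, vw_2 \in E_v$ with $w_1 \ne w_2$. As $e$ is not incident with $v$ yet must share an endpoint with each of $vw_1$ and $vw_2$, it is forced to contain both $w_1$ and $w_2$, so $e = w_1w_2$; but then $\{v, w_1, w_2\}$ induces a triangle in $G$, contradicting triangle-freeness. Hence no such $e$ exists and $E_v$ is maximal. Note that the degree hypothesis is exactly what guarantees there are two distinct edges to play off against each other; without it a pendant edge could fail to give a maximal clique.

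The converse inclusion is the core of the lemma and the step I expect to be the main obstacle. Let $C$ be an arbitrary maximal clique of $H$. I would first observe that $|C| \ge 2$: if $C = \{e\}$ with $e = vw$, then since $\deg(v) \ge 2$ there is a second edge at $v$ adjacent to $e$ in $H$, contradicting maximality. So I may fix two edges $e_1, e_2 \in C$; being adjacent in $H$ they share a common endpoint $v$, and since they are distinct I can write $e_1 = va$, $e_2 = vb$ with $a \ne b$. The crucial claim is that every edge $f \in C$ is incident with $v$. Indeed, if $f$ did not contain $v$, then adjacency of $f$ with both $e_1$ and $e_2$ would force $f$ to contain $a$ and $b$, giving $f = ab$ and hence a triangle $\{v, a, b\}$ in $G$ --- impossible. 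This ``pass through the common vertex or complete a triangle'' dichotomy is precisely where triangle-freeness enters, and keeping the incidence bookkeeping straight is the delicate part of the whole argument. It yields $C \subseteq E_v$.

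Finally I would combine the two directions: since $E_v$ is itself a clique containing the maximal clique $C$, maximality forces $C = E_v$, while the second paragraph shows that every $E_v$ is a maximal clique. Together these prove that the maximal cliques of $H$ are exactly the sets $E_v$ for $v \in V(G)$.
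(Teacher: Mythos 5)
Your proof is correct and follows essentially the same route as the paper's (very terse) argument: triangle-freeness forces every clique of $H$ to consist of edges through a common vertex, and the minimum-degree-two hypothesis ensures each $E_v$ is maximal. The only cosmetic difference is that you verify maximality of $E_v$ directly, whereas the paper phrases the same fact as the incomparability of the sets $E_u$ and $E_v$ for $u\neq v$; your version fills in all the details the paper leaves implicit.
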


\begin{proof}
Since $G$ is triangle-free, any clique in $H$ corresponds to a set of edges in $G$ having a common endpoint.
Furthermore, since $G$ is of minimum degree at least $2$, any two sets $E_u$ and $E_v$ for $u\neq v$ are incomparable with respect to inclusion.
\end{proof}

An \emph{edge cover} of a graph $G$ is a set $F$ of edges such that every vertex of $G$ is incident with some edge of $F$.

\begin{lemma}\label{lem:clique-transversal-line}
Let $G$ be a triangle-free graph with minimum degree at least $2$ and let $H$ be the line graph of $G$.
Then, a set $F\subseteq E(G)$ is a clique transversal in $H$ if and only if $F$ is an edge cover in $G$.
Consequently, a set $F\subseteq E(G)$ is a minimal clique transversal in $H$
if and only if $F$ is a minimal edge cover in $G$.
\end{lemma}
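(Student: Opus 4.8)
The plan is to obtain both equivalences directly from \Cref{lem:clique-line}, which identifies the maximal cliques of $H$ as exactly the sets $E_v$, $v\in V(G)$, where $E_v$ consists of the edges of $G$ incident with $v$. Since a clique transversal in $H$ is by definition a subset of $V(H)=E(G)$ meeting every maximal clique, I would first note that a set $F\subseteq E(G)$ is a clique transversal in $H$ if and only if $F\cap E_v\neq\emptyset$ for every vertex $v\in V(G)$.

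The next step is to translate this condition back into $G$: for a fixed vertex $v$, the requirement $F\cap E_v\neq\emptyset$ says precisely that some edge of $F$ is incident with $v$, so imposing it for all $v$ is exactly the assertion that every vertex of $G$ is covered by an edge of $F$. This is the definition of $F$ being an edge cover of $G$, which establishes the first equivalence.

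For the ``consequently'' part, the observation I would use is that the first equivalence shows that ``clique transversal in $H$'' and ``edge cover in $G$'' are the same predicate on subsets of $E(G)$; hence the two set families coincide, and so do their inclusion-minimal members. Spelling this out, $F$ is a minimal clique transversal iff $F$ is a clique transversal none of whose proper subsets is a clique transversal, which by the first equivalence (applied to $F$ and to each of its proper subsets) is equivalent to $F$ being a minimal edge cover. There is no genuine obstacle once \Cref{lem:clique-line} is available; the only point to watch is that minimality is taken with respect to set inclusion on the same ground set $E(G)$ in both settings, so it transfers automatically.
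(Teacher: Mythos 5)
Your proposal is correct and follows exactly the route the paper intends: the paper's proof consists of the single line ``Immediate from the definitions and \Cref{lem:clique-line},'' and your argument is simply that one-liner spelled out, deriving both equivalences from the identification of the maximal cliques of $H$ with the sets $E_v$ and noting that minimality transfers because the two set families on the common ground set $E(G)$ coincide.
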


\begin{proof}
Immediate from the definitions and~\Cref{lem:clique-line}.
\end{proof}

Using \Cref{thm:ssf-bip-delta-two} and~\Cref{lem:clique-transversal-line}, we can now prove the following.

\begin{theorem}\label{thm:line-bip}
\textsc{Upper Clique Transversal} is {\sf NP}-complete in the class of line graphs of bipartite graphs.
\end{theorem}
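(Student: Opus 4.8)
The plan is to combine two ingredients: membership in {\sf NP} via a bounded number of maximal cliques, and {\sf NP}-hardness via a reduction from {\sc Spanning Star Forest} restricted to bipartite graphs of minimum degree at least $2$ (\Cref{thm:ssf-bip-delta-two}). For membership, note that by \Cref{lem:clique-line} the line graph $H$ of a triangle-free graph $G$ of minimum degree at least $2$ has at most $|V(G)|$ maximal cliques, so \Cref{prop:poly-many-maximal-cliques} applies and UCT is in {\sf NP} on the class of line graphs of bipartite graphs.

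For the hardness part, given a bipartite graph $G$ of minimum degree at least $2$ and an integer $\ell$, I would form its line graph $H$ (a line graph of a bipartite graph) and output the instance $(H,\ell)$. Since $G$ is bipartite it is triangle-free, so \Cref{lem:clique-transversal-line} tells us that the minimal clique transversals of $H$ are exactly the minimal edge covers of $G$, with matching cardinalities. Hence $\tau_c^+(H)$ equals the maximum size of a minimal edge cover of $G$, and the correctness of the reduction reduces to the following statement: \emph{for a graph $G$ without isolated vertices, the maximum number of edges in a minimal edge cover equals the maximum number of edges in a spanning star forest.} Granting this, $H$ has a minimal clique transversal of size at least $\ell$ if and only if $G$ has a spanning star forest with at least $\ell$ edges, which is exactly what the source instance asks.

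The main work, and the step I expect to be the obstacle, is proving this equality of maxima. One inequality is immediate: every minimal edge cover is a spanning subgraph whose components are stars, hence a spanning star forest, so the maximum minimal edge cover size is at most the maximum spanning star forest size, which (as recorded in the proof of \Cref{thm:ssf-bip-delta-two}) equals $|V(G)|-\gamma(G)$, where $\gamma(G)$ denotes the domination number. For the reverse inequality I would start from a minimum dominating set $D$ and try to realize it as the set of centers of a spanning star forest with no isolated vertices; the only obstruction is a ``lonely'' center receiving no leaf. To rule this out, I would set up the bipartite graph between $D$ and $V(G)\setminus D$ induced by the edges of $G$ and look for a matching saturating $D$. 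Assigning each matched center its private leaf and each remaining vertex to an arbitrary neighbor in $D$ then yields a minimal edge cover with $|V(G)|-|D| = |V(G)|-\gamma(G)$ edges, giving the reverse inequality.

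The existence of such a matching is where the argument must be pushed, and I would verify Hall's condition by contradiction: taking a deficient set $S\subseteq D$ with $|N(S)\cap(V(G)\setminus D)| < |S|$, I would replace $S$ by its external neighborhood $T := N(S)\cap(V(G)\setminus D)$ to obtain the strictly smaller set $(D\setminus S)\cup T$. The crucial point that makes this work is that minimality of $D$ forces every vertex of $D$ to have a neighbor outside $D$ (otherwise such a vertex could be deleted from $D$ while remaining dominated), so each vertex of $S$ has a neighbor in $T$; this, together with the observation that every vertex of $(V(G)\setminus D)\setminus T$ still has a dominator in $D\setminus S$, shows that $(D\setminus S)\cup T$ dominates $G$, contradicting the minimality of $D$. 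Thus Hall's condition holds, the required minimal edge cover exists, and we obtain $\tau_c^+(H) = |V(G)|-\gamma(G)$ equal to the maximum spanning star forest size, completing the reduction and the proof.
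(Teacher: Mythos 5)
Your proof is correct and follows the same reduction as the paper: membership in {\sf NP} via a linear bound on the number of maximal cliques of a line graph of a bipartite graph, and hardness by mapping an instance $(G,\ell)$ of \textsc{Spanning Star Forest} on bipartite graphs of minimum degree at least $2$ to $(L(G),\ell)$, using \Cref{lem:clique-transversal-line} to identify minimal clique transversals of $L(G)$ with minimal edge covers of $G$. The one genuine difference is how the bridge between minimal edge covers and spanning star forests is handled: the paper simply cites Hedetniemi and Manlove for the fact that the maximum size of a minimal edge cover equals the maximum number of edges in a spanning star forest, whereas you re-derive it, showing both maxima equal $|V(G)|-\gamma(G)$ by building a minimal edge cover from a minimum dominating set $D$ via a matching of $D$ into $V(G)\setminus D$. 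Your Hall-condition argument is sound: minimality of $D$ (plus the absence of isolated vertices) gives every vertex of $D$ an external neighbor, and swapping a deficient set $S$ for $T=N(S)\cap(V(G)\setminus D)$ produces a strictly smaller dominating set, a contradiction; the resulting star forest with centers $D$ is indeed a minimal edge cover since every edge has a degree-one endpoint. This buys self-containedness at the cost of length. Two small points you should tighten: the inclusion ``every minimal edge cover is a spanning star forest'' deserves a one-line justification (a component containing a $P_4$ or a triangle would have a removable edge), and in the {\sf NP}-membership part you invoke \Cref{lem:clique-line}, whose minimum-degree-$2$ hypothesis need not hold for an arbitrary bipartite root $H$ of a given line graph; the correct statement is only that every maximal clique is of the form $E_v$ (not conversely), which still yields the bound of $|V(H)|\le 2|E(H)|=2|V(G)|$ maximal cliques as in the paper.
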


\begin{proof}
To argue that the problem is in {\sf NP}, we show that every line graph of a bipartite graph has at most polynomially many maximal cliques.
Let $G$ be a line graph of a bipartite graph.
Fix a bipartite graph $H$ such that $G = L(H)$.
Clearly, we may assume that $H$ has no isolated vertices.
Since $H$ is triangle-free, any clique in $G$ corresponds to a set of edges in $H$ having a common endpoint, and consequently any maximal clique in $G$ corresponds to an inclusion-maximal set of edges in $H$ having a common endpoint.
The number of such sets is bounded by the number of vertices in $H$.
Since
\[|V(H)| =  \sum_{v\in V(H)} 1 \le \sum_{v\in V(H)}\deg(v) = 2|E(H)| = 2|V(G)|\,,\]
it follows that the number of maximal cliques in $G$ is at most $2|V(G)|$.
By \Cref{prop:poly-many-maximal-cliques}, the problem is in {\sf NP}.

To prove {\sf NP}-hardness, we make a reduction from {\sc Spanning Star Forest} in the class of bipartite graphs with minimum degree at least $2$.
By \Cref{thm:ssf-bip-delta-two}, this problem is {\sf NP}-complete.
Let $H$ be the line graph of $G$.
By \Cref{lem:clique-transversal-line}, a set $F\subseteq E(G)$ is a minimal clique transversal in $H$ if and only if $F$ is a minimal edge cover in $G$.
Therefore, the graph $G$ contains a minimal edge cover with at least $\ell$ edges if and only if its line graph, $H$, contains a minimal clique transversal with at least $\ell$ vertices.
As observed by Hedetniemi~\cite{hedetniemi1983max}, the maximum size of a minimal edge cover equals the maximum number of edges in a spanning star forest (in fact, a set of edges in a graph without isolated vertices is a minimal edge cover if and only if it is a spanning star forest, see Manlove~\cite{MR1670163}).
Therefore, the graph $G$ contains a minimal edge cover with at least $\ell$ edges if and only if $G$ contains a spanning star forest with at least $\ell$ edges.
The claimed {\sf NP}-hardness result follows from \Cref{thm:ssf-bip-delta-two}.
\end{proof}

We now prove intractability of UCT in the class of chordal graphs.
A graph is \emph{chordal} if it does not contain any induced cycles on at least four vertices.

We first recall a known result on maximal cliques in chordal graphs.

\begin{theorem}[Berry and Pogorelcnik~\cite{MR2816655}]\label{thm:chordal-maximal-cliques}
A chordal graph $G = (V,E)$ has at most $|V|$ maximal cliques, which can be computed in time $\mathcal{O}(|V|+|E|)$.
\end{theorem}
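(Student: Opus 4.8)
The plan is to build the proof on a \emph{perfect elimination ordering} of $G$, that is, an ordering $v_1,\dots,v_n$ of $V$ such that for every $i$ the set $X_i := N(v_i)\cap\{v_{i+1},\dots,v_n\}$ of later neighbors induces a clique. Such an ordering exists precisely because $G$ is chordal: by Dirac's theorem every chordal graph has a simplicial vertex, and deleting a simplicial vertex preserves chordality, so iterating the deletion produces the ordering. For each $i$ I would record the candidate clique $K_i := \{v_i\}\cup X_i$, which is indeed a clique by the defining property of the ordering.

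The cardinality bound then follows from a one-line injection. Let $C$ be any maximal clique of $G$ and let $v_i$ be its vertex of smallest index; every other vertex of $C$ lies in $\{v_{i+1},\dots,v_n\}$ and is adjacent to $v_i$, so $C\subseteq K_i$, and since $K_i$ is a clique and $C$ is maximal we get $C=K_i$. Distinct maximal cliques have distinct minimum-index vertices, so the assignment $C\mapsto v_i$ injects the set of maximal cliques into $V$; hence there are at most $|V|$ of them.

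For the running-time claim I would proceed in three linear-time stages. First, compute a perfect elimination ordering, together with a certificate of chordality, using Lexicographic BFS or Maximum Cardinality Search, each of which runs in $\mathcal{O}(|V|+|E|)$ time. Second, materialize the candidate cliques $K_i$; since each edge $v_iv_j$ with $i<j$ contributes the vertex $v_j$ to exactly one set $X_i$, we have $\sum_i |K_i| = |V| + |E|$, so the whole list can be written down in $\mathcal{O}(|V|+|E|)$ time. Third, discard those $K_i$ that are not maximal. A short case analysis shows that $K_i$ fails to be maximal if and only if some earlier neighbor $u$ of $v_i$ satisfies $\{v_i\}\cup X_i\subseteq X_u$, equivalently $K_i\subseteq K_u$.

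The main obstacle is exactly this third stage: carried out naively it would require up to $\Theta(|V|^2)$ pairwise comparisons of candidate sets. The resolution exploits the tree-like (clique-tree) structure of the family $\{K_i\}$: threading the candidates along the elimination ordering and attaching each $v_i$ to a designated later neighbor reduces the global subsumption test to a single cardinality comparison per vertex, which is the bookkeeping performed by the Tarjan--Yannakakis sweep and amortizes to $\mathcal{O}(|V|+|E|)$ overall. Verifying that this local test is correct — that it discards every non-maximal candidate and retains every maximal one — is the delicate point, and it is here that the clique-tree theory of chordal graphs does the real work.
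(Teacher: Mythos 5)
The paper does not actually prove this statement: it is imported verbatim from Berry and Pogorelcnik, with only a citation, so there is no internal proof to compare yours against. Your argument for the cardinality bound is complete, correct, and self-contained: assigning to each maximal clique $C$ its minimum-index vertex $v_i$ in a perfect elimination ordering and observing that $C=K_i:=\{v_i\}\cup X_i$ is the classical Fulkerson--Gross argument, and the injectivity conclusion is immediate. Your bookkeeping $\sum_i |K_i| = |V|+|E|$ and your characterization of the non-maximal candidates ($K_i$ fails to be maximal if and only if $K_i\subseteq X_u$ for some earlier neighbor $u$ of $v_i$) are also both correct; for the latter, note that the witness $u$ can always be taken to be the minimum-index vertex of a maximal clique properly containing $K_i$, which is necessarily an earlier neighbor of $v_i$.

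The one place where the write-up stops short of a proof is the third stage, and you flag it yourself. The assertion that the subsumption test can be replaced by a single local cardinality comparison per vertex, and that this comparison discards every non-maximal $K_i$ and no maximal one, is not a routine implementation detail --- it is essentially the entire content of the cited Berry--Pogorelcnik result (and of the earlier Tarjan--Yannakakis test, where a new maximal clique is declared exactly when the Maximum Cardinality Search label fails to strictly increase). Its correctness genuinely depends on the ordering being produced by MCS or LexBFS rather than being an arbitrary perfect elimination ordering, so ``compute a PEO somehow, then run the sweep'' would not be justified as stated. If your goal were a self-contained proof of the theorem, this lemma would have to be stated precisely and proved; as a reconstruction of why the cited result is plausible, everything else you wrote is sound.
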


\begin{theorem}\label{sec:NP-c-chordal}
\textsc{Upper Clique Transversal} is {\sf NP}-complete in the class of chordal graphs.
\end{theorem}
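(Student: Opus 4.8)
The plan is to prove {\sf NP}-completeness of UCT on chordal graphs by a reduction from {\sc Spanning Star Forest}, mirroring the strategy used for line graphs of bipartite graphs but now building a chordal gadget directly. Membership in {\sf NP} is immediate: by \Cref{thm:chordal-maximal-cliques}, a chordal graph on $n$ vertices has at most $n$ maximal cliques, which are computable in linear time, so \Cref{prop:poly-many-maximal-cliques} applies and a candidate minimal clique transversal can be verified in polynomial time. For hardness I would start from an instance $(G,\ell)$ of {\sc Spanning Star Forest} and, recalling from \Cref{thm:line-bip} that a minimal edge cover is exactly a spanning star forest, try to encode edges of $G$ as vertices of a chordal graph in such a way that minimal clique transversals correspond to minimal edge covers.

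The key construction idea is to turn each vertex $v$ of $G$ into a clique whose members represent the edges incident with $v$, so that the resulting ``incidence'' structure is chordal. Concretely, I would introduce for each edge $e = uv$ of $G$ a vertex $x_e$, and for each vertex $v$ of $G$ a clique $K_v$ consisting of all $x_e$ with $e$ incident to $v$; to force chordality and to make each $K_v$ a maximal clique I would add for every $v$ a private apex vertex $a_v$ adjacent to all of $K_v$ (and possibly a few further private vertices to control which sets are maximal cliques and to block stray induced cycles). The intended correspondence is that hitting all the cliques $K_v \cup \{a_v\}$ amounts to selecting, for each vertex $v$, at least one incident edge — i.e.\ covering $V(G)$ by edges — so that minimal clique transversals avoiding the apex vertices biject with minimal edge covers of $G$, hence with spanning star forests, with sizes matching up to an additive constant determined by the number of apex/private vertices.

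The main steps, in order, would be: (1) define the gadget precisely and verify it is chordal, most cleanly by exhibiting a perfect elimination ordering (eliminate the apex/private vertices first, then the edge-vertices); (2) identify all maximal cliques of the gadget, using the elimination ordering together with \Cref{thm:chordal-maximal-cliques} to be sure the list is complete; (3) show that one may assume an optimal minimal clique transversal uses no apex vertex (an exchange argument: an apex $a_v$ can be swapped for an incident edge-vertex without decreasing size and while preserving both the transversal and minimality properties); (4) establish the bijection between such transversals and minimal edge covers of $G$, and read off the size equation $|S| = |F| + c$ for an explicit constant $c$; (5) combine with the edge-cover/spanning-star-forest equivalence and \Cref{thm:ssf-bip-delta-two} to conclude.

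The hard part will be step (3) together with pinning down the maximal cliques in step (2): I must rule out ``degenerate'' minimal clique transversals that gain size by including apex vertices or by exploiting cliques I did not intend, and I must ensure the gadget introduces no extra maximal cliques that would distort the correspondence or the size bookkeeping. Getting the exchange argument in step (3) to preserve \emph{minimality} — not merely the transversal property and the cardinality — is the delicate point, since removing a vertex from a minimal set and adding another can destroy the private-clique witnesses certifying minimality; I would handle this by designing the private vertices so that each selected edge-vertex and each non-selected vertex has an unambiguous private maximal clique witnessing its role, making the minimality conditions local and robust under the exchange.
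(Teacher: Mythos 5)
Your membership argument is exactly the paper's (\Cref{thm:chordal-maximal-cliques} plus \Cref{prop:poly-many-maximal-cliques}) and is fine, and reducing from \textsc{Spanning Star Forest} is also the paper's choice. But your hardness gadget has a fatal structural flaw. Making each $K_v=\{x_e: e \ni v\}$ a clique means the edge-vertices induce precisely the line graph of $G$, and the line graph of a bipartite graph with minimum degree at least $2$ is essentially never chordal: if $C$ is a shortest cycle of $G$, of length $2k\ge 4$, its edges induce a chordless $2k$-cycle in $L(G)$ and hence in your gadget. Crucially, your proposed remedy --- adding ``further private vertices to block stray induced cycles'' --- cannot work in principle: an induced cycle survives the addition of any number of new vertices; only inserting chords among its own vertices can destroy it. So step (1) of your plan already fails.

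The natural chordal repair does not save the reduction either. If you add all edges among the $x_e$'s so that the edge-vertices form one clique (the gadget then becomes a split graph, hence chordal, with maximal cliques $K_v\cup\{a_v\}$ and possibly the big clique), the size correspondence collapses: the set consisting of a single edge-vertex $x_e$ (privately witnessed by the big clique) together with all apex vertices $a_w$ for $w$ not an endpoint of $e$ is a \emph{minimal} clique transversal of size $|V(G)|-1$, independent of the spanning-star-forest structure of $G$. Your exchange argument in step (3) cannot repair this: replacing $a_w$ by an incident edge-vertex $x_f$ can destroy the private clique of the apex sitting at the other endpoint of $f$, so minimality is not preserved, and iterating the exchange shrinks the set. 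The paper's construction inverts the roles to avoid both problems: it makes $V(G)$ itself a complete graph and attaches, for each edge $e=\{u,v\}$, a vertex $x^e$ adjacent to $u$, $v$, and to a pendant $y^e$. The resulting graph is trivially chordal, the pendant pairs $\{x^e,y^e\}$ force exactly one of $x^e,y^e$ into every minimal clique transversal (accounting for exactly $|E|$ vertices), and the remaining selected vertices, all inside the clique $V$, encode the star forest via the private maximal cliques $\{u,v,x^e\}$. You would need to redesign your gadget along these lines rather than patch the line-graph-based one.
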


\begin{proof}
Membership in {\sf NP} follows from \Cref{thm:chordal-maximal-cliques} and \Cref{prop:poly-many-maximal-cliques}.

To prove {\sf NP}-hardness, we reduce from {\sc Spanning Star Forest}.
Let $G =  (V,E)$ and $\ell$ be an input instance of {\sc Spanning Star Forest}. We may assume without loss of generality that $G$ has an edge and that $\ell\ge 2$, since if any of these assumptions is violated, then it is trivial to verify if $G$ has a spanning star forest with at least $\ell$ edges.

We construct a chordal graph $G'$ as follows. We start with a complete graph with vertex set $V$.
For each edge $e=\{u,v\}\in E$, we introduce two new vertices $x^e$ and $y^e$, and make $x^e$ adjacent to $u$, to $v$, and to $y^e$. The obtained graph is $G'$. We thus have $V(G') = V\cup X\cup Y$, where $X = \{x^e:e\in E\}$ and $Y = \{y^e:e\in E\}$.
See Fig.~\ref{fig:reduced-1} for an example. Clearly, $G'$ is chordal.
Furthermore, let $k = \ell+|E|$.

\begin{figure}[h!]
	\centering
	\includegraphics[width=0.75\textwidth]{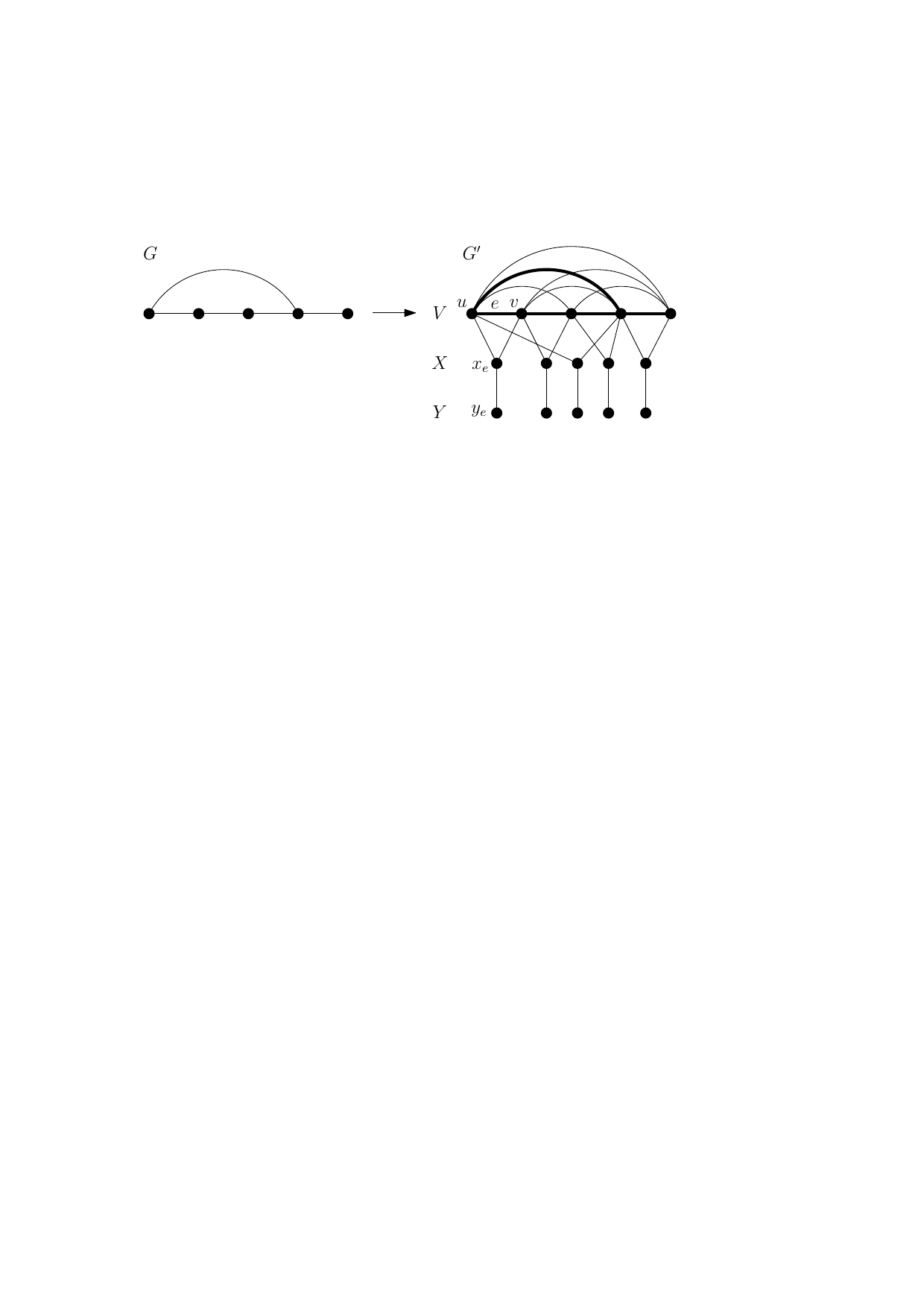}
	\caption{Transforming $G$ to $G'$.}
	\label{fig:reduced-1}
\end{figure}

To complete the proof, we show that $G$ has a spanning star forest of size at least $\ell$ if and only if $G'$ has a minimal clique transversal of size at least $k$.

First, assume that $G$ has a spanning star forest $(V,F)$ such that $|F|\ge \ell$.
Since $(V,F)$ is a spanning forest in which each component is a star, each edge of $F$ is incident with a vertex of degree one in $(V,F)$. Let $S$ be a set obtained by selecting from each edge in $F$ one vertex of degree one in $(V,F)$.
Then every edge of $F$ has one endpoint in $S$ and the other one in $V\setminus S$. In particular, $|S| = |F|\ge \ell$.
Let $S' = S\cup\{x^e:e\in E\setminus F\}\cup \{y^f:f\in F\}$. See Fig.~\ref{fig:reduced-2} for an example.

\begin{figure}[h!]
	\centering
	\includegraphics[width=0.75\textwidth]{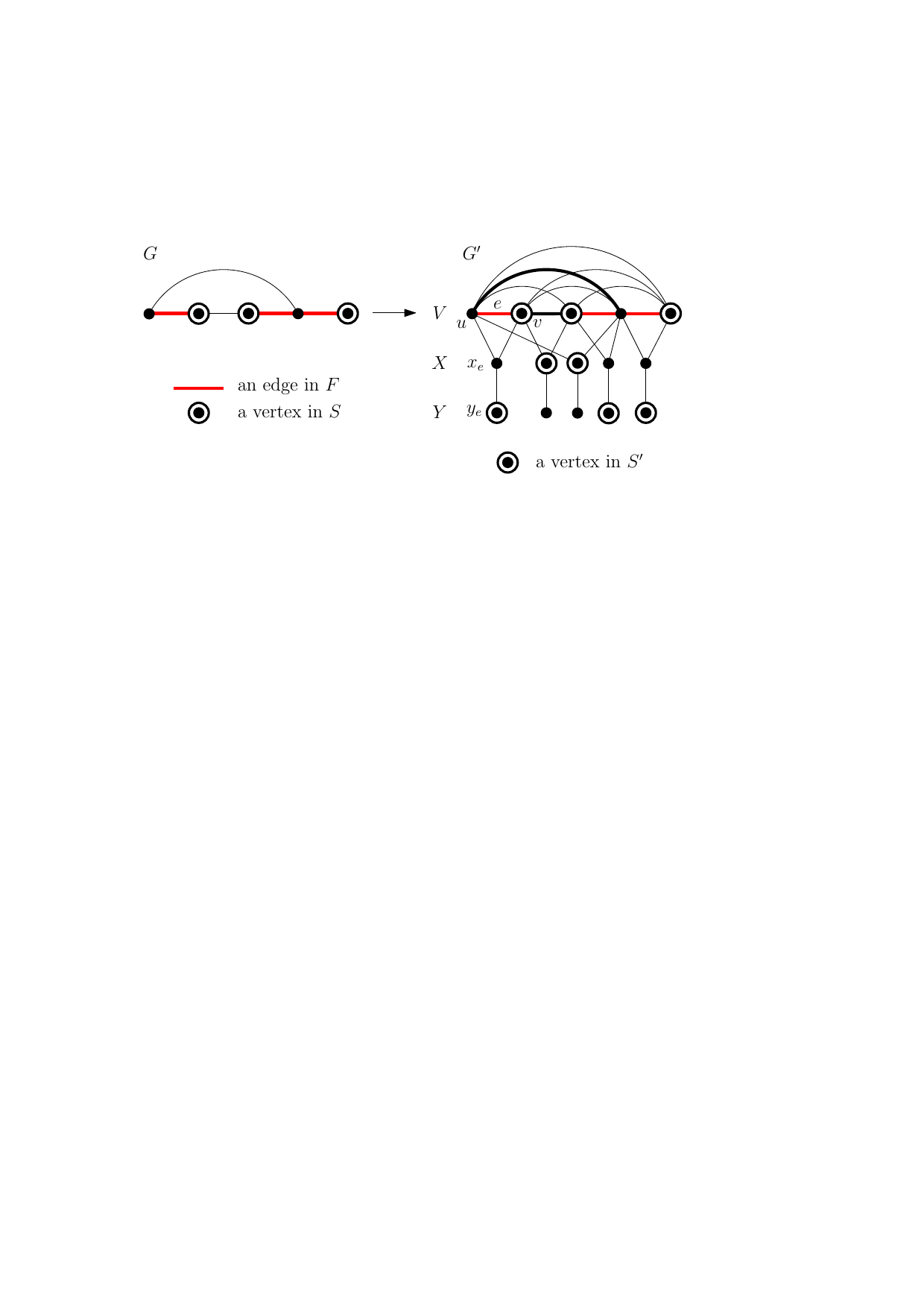}
	\caption{Transforming a spanning star forest $(V,F)$ in $G$ into a minimal clique transversal $S'$ in $G'$.}
	\label{fig:reduced-2}
\end{figure}

Clearly, the size of $S'$ is at least $\ell+|E| = k$. We claim that $S'$ is a minimal clique transversal of $G'$.
There are three kinds of maximal cliques in $G'$: the set $V$, sets of the form $\{u,v,x^e\}$ for all $e = \{u,v\}\in E$, and sets of the form $\{x^e,y^e\}$ for all $e\in E$. Since $|S| = |F|\ge \ell\ge 2$, the set $S$ is non-empty, and thus the set $S'$ intersects $V$.
Furthermore, since $S$ contains one endpoint of each edge in $F$, set $S'$ intersects all cliques of the form $\{u,v,x^f\}$ for all $f = \{u,v\}\in F$. For all $e = \{u,v\}\in E\setminus F$, set $S'$ contains vertex $x^e$ and thus also intersects the clique $\{u,v,x^e\}$, as well as the clique $\{x^e,y^e\}$. Finally, for each $f\in F$, we have that $y^f\in S'$ and hence $S'$ intersects $\{x^f,y^f\}$. Thus, $S'$ is a clique transversal of $G'$.

To argue minimality, we need to show that for every $u\in S'$ there exists a maximal clique in $G'$ missed by $S'\setminus \{u\}$.
Suppose first that $u\in V$. Then $u\in S$
and there is an edge $f\in F$ such that $u$ is an endpoint of $f$. Let $v$ be the other endpoint of $f$. Then $v\not\in S$ and thus also $v\not\in S'$. Note also that $x^f\not\in S'$. In particular, this implies that the set $\{u,v,x^f\}$ is a maximal clique of $G'$ missed by $S'\setminus \{u\}$. Next, suppose that
$u\in X$. Then $u = x^e$ for some edge $e\in E\setminus F$ and $y^e\not\in S'$, hence the set $\{x^e,y^e\}$ is a maximal clique of $G'$ missed by $S'\setminus \{u\}$. Finally, suppose that
$u\in Y$. Then $u = x^f$ for some edge $f\in F$. Then $x^f\not\in S'$, therefore the set $\{x^f,y^f\}$ is a maximal clique of $G'$ missed by $S'\setminus \{u\}$. This shows that $S'$ is a minimal clique transversal of $G'$, as claimed.

For the converse direction, let $S'$ be a minimal clique transversal of $G'$ such that $|S'|\ge k$.
First we show that $S'\cap Y\neq\emptyset$. Suppose for a contradiction that $S'\cap Y= \emptyset$. Then $X\subseteq S'$, since otherwise the maximal clique $\{x^e,y^e\}$ of $G'$ would be missed by $S'$ for every $x^e\in X\setminus S'$. Furthermore, since $V$ is a maximal clique in $G'$, there is a vertex $u\in S'$ such that $u\in V$.
Since the set $X\cup\{u\}$ is a clique transversal in $G'$, the minimality of $S'$ implies that $S' = X\cup\{u\}$.
Using the fact that $k = \ell+|E|$ and $k\le |S'| = |X|+1 = |E|+1$, we then obtain that $\ell\le 1$.
This contradicts our assumption that $\ell\ge 2$ and shows that $S'\cap Y\neq\emptyset$.

Let $S = S'\cap V$. Recall that for every edge $e\in E$ we denote by $x^e$ the unique vertex in $X$ that is adjacent in $G'$ to both endpoints of $e$. We claim that for each vertex $u\in S$ there exists a vertex $v\in V$ such that $e = \{u,v\}\in E$ and
$S'\cap \{u,v,x^e\} = \{u\}$. Let $u\in S$ and suppose for a contradiction that for all vertices $v$
such that $e = \{u,v\}\in E$ we have $S'\cap \{u,v,x^e\} \neq \{u\}$. This implies that the set $S'\setminus\{u\}$
intersects all maximal cliques in $G'$ of the form $\{u,v,x^e\}$ for some $e = \{u,v\}\in E$.
Since $S'$ is a minimal clique transversal of $G'$, we infer that the maximal clique of $G'$ missed by
$S'\setminus\{u\}$ is $V$. In particular, we have $S = S'\cap V = \{u\}$, which in turn implies that
for all vertices $v\in V$ such that $e = \{u,v\}\in E$ we have $S'\cap \{u,v,x^e\} = \{u,x^e\}$.
Since $S'\cap Y\neq\emptyset$, there exists an edge $e=\{w,z\}$ of $G$ such that $y^e\in S'$.
Then $x^e\not\in S'$, and therefore $u$ is not an endpoint of $e$. However, since, $x^e\not\in S'$ but $S'$ intersects the maximal clique $\{w,z,x^e\}$, it follows that an endpoint of $e$ belongs to $S$. This contradicts the fact that
$S = \{u\}$ and $u$ is not an endpoint of~$e$.

By the above claim, we can associate to each vertex $u\in S$ a vertex $v(u)\in V$ such that $e = \{u,v(u)\}\in E$ and $S'\cap \{u,v(u),x^e\} = \{u\}$. For each $u\in S$, let us denote by $e(u)$ the corresponding edge $\{u,v(u)\}$, and let
$F = \{e(u):u\in S\}$ (see Fig.~\ref{fig:reduced-3}). We next claim that the mapping $u\mapsto e(u)$ is one-to-one, that is, for all $u_1,u_2\in S$, if $e(u_1) = e(u_2)$ then $u_1 = u_2$. Suppose that $e(u_1) = e(u_2)$ for some $u_1\neq u_2$. Then $e(u_1) = e(u_2) = \{u_1,u_2\}$, $v(u_1) = u_2$, and $v(u_2) = u_1$.
Furthermore, $\{u_1\} = S'\cap \{u_1,v(u_1),x^{e(u_1)}\} = S'\cap \{u_2,v(u_2),x^{e(u_2)}\} = \{u_2\}$, which is in contradiction
with $u_1\neq u_2$. Since the mapping $u\mapsto e(u)$ is one-to-one, we have $|F| = |S|$.
Furthermore, every vertex in $S$ has degree one in $(V,F)$. Therefore, the graph $(V,F)$ is a spanning star forest of $G$.

\begin{figure}[h!]
 	\centering
\includegraphics[width=0.83\textwidth]{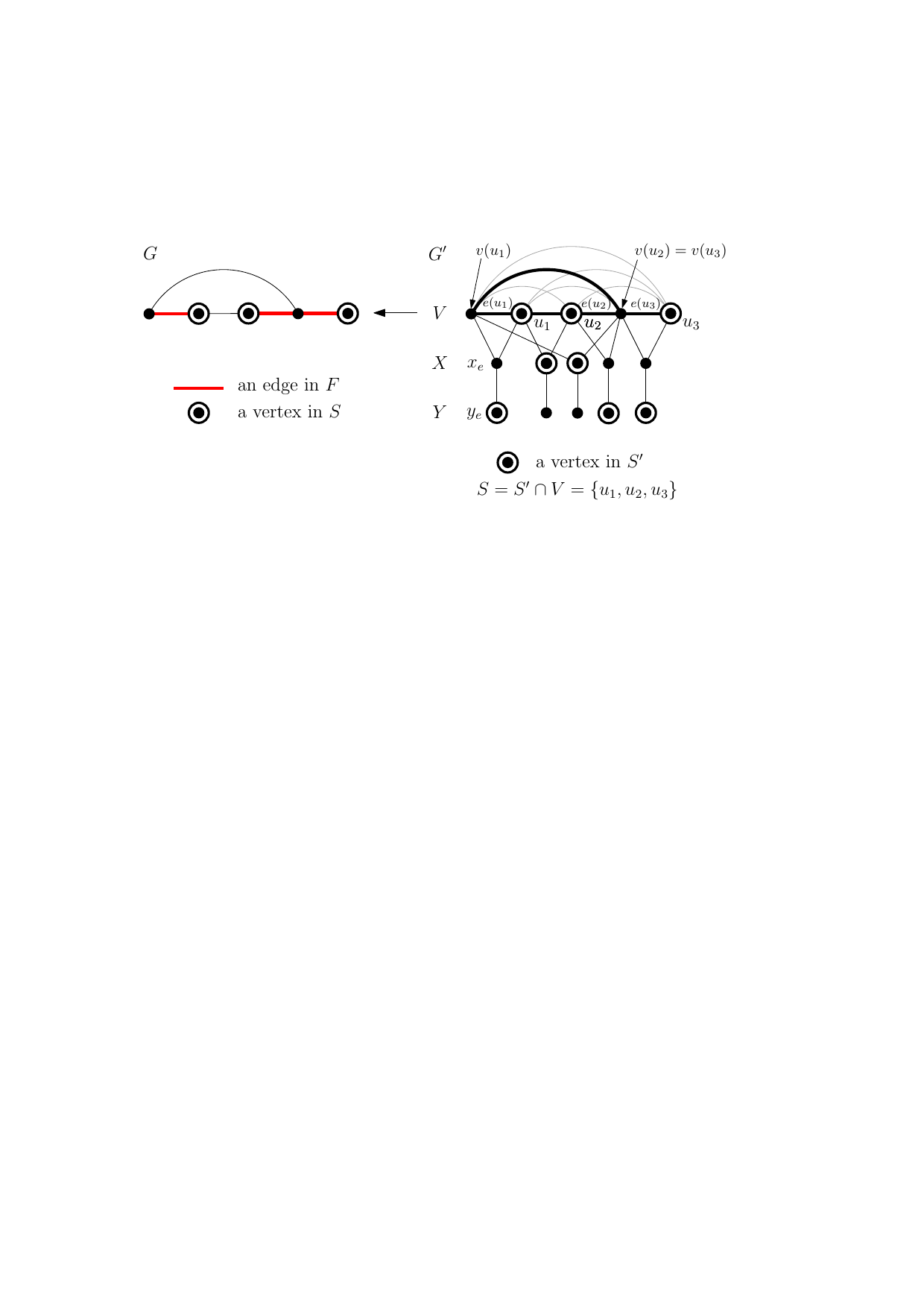}
 	\caption{Transforming a minimal clique transversal $S'$ in $G'$ into a spanning star forest $(V,F)$ in $G$.}
 	\label{fig:reduced-3}
\end{figure}

Since $S'$ is a minimal clique transversal of $G'$, for each edge $e\in E$ exactly one of $x^e$ and $y^e$ belongs to $S'$.
Therefore, $|F| = |S| = |S'|-|E|\ge k-|E| = \ell$. Thus, $G$ has a spanning star forest of size at least $\ell$.
\end{proof}

\section{A linear-time algorithm for UCT in split graphs}
\label{sec:split-graphs}

A \emph{split graph} is a graph that has a \emph{split partition}, that is, a partition of its vertex set into a clique and an independent set.
We denote a split partition of a split graph $G$ as $(K,I)$ where $K$ is a clique, $I$ is an independent set, $K\cap I = \emptyset$, and $K\cup I = V(G)$.
We may assume without loss of generality that $I$ is a maximal independent set.
Indeed, if this is not the case, then $K$ contains a vertex $v$ that has no neighbors in $I$, and $(K\setminus \{v\},I\cup \{v\})$ is a split partition of $G$ such that $I\cup\{v\}$ is a maximal independent set.
In what follows, we repeatedly use the structure of maximal cliques of split graphs. If $G$ is a split graph with a split partition $(K,I)$, then the maximal cliques of $G$ are as follows: the closed neighborhoods $N[v]$, for all $v\in I$, and the clique $K$, provided that it is a maximal clique, that is, every  vertex in $I$ has a non-neighbor in $K$.

Given a graph $G$ and a set of vertices $S\subseteq V(G)$, we denote by $N(S)$ the set of all vertices in $V(G)\setminus S$ that have a neighbor in $S$. Moreover, given a vertex $v\in S$, an \emph{$S$-private neighbor} of $v$ is any vertex $w\in N(S)$ such that $N(w)\cap S = \{v\}$.
The following proposition characterizes minimal clique transversals of split graphs.

\begin{proposition}\label{prop:MCT-split}
Let $G$ be a split graph with a split partition $(K,I)$ such that $I$ is a maximal independent set
and let $S\subseteq V(G)$.
Let $\XK = K\cap S$ and $\XI = I\cap S$.
Then, $S$ is a minimal clique transversal of $G$ if and only if the following conditions hold:
\begin{enumerate}[(i)]
  \item\label{condition:domination-2} $\XK \neq \emptyset$ if $K$ is a maximal clique.
  \item\label{condition:domination-and-minimality}  $\XI = I\setminus N(\XK)$.
  \item\label{condition:minimality-2} Every vertex in $\XK$ has a $\XK$-private neighbor in $I$.
\end{enumerate}
\end{proposition}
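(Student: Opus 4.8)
The plan is to prove both directions of the equivalence by working directly with the explicit description of the maximal cliques of a split graph recalled just before the proposition. Recall that, since $I$ is a maximal independent set, the maximal cliques of $G$ are exactly the closed neighborhoods $N[v]$ for $v\in I$, together with $K$ itself whenever $K$ is a maximal clique. This gives me a concrete finite list of cliques to hit, and minimality will amount to producing, for each $s\in S$, a maximal clique whose only element of $S$ is $s$. I would set up the notation $\XK = K\cap S$, $\XI = I\cap S$ once and phrase everything in terms of these two pieces.

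\medskip
\textbf{Forward direction} ($S$ a minimal clique transversal $\Rightarrow$ (i)--(iii)). For (i), if $K$ is a maximal clique then $S$ must intersect it, and $S\cap K=\XK$, so $\XK\neq\emptyset$. For (ii), I argue two inclusions. First, $\XI\subseteq I\setminus N(\XK)$: if some $v\in\XI$ had a neighbor in $\XK$, I would show $v$ is redundant, because the only maximal clique containing $v$ is $N[v]$ (as $v\in I$ is independent, the cliques through $v$ are subsets of $N[v]$, and $N[v]$ is itself maximal), and $N[v]$ is already hit by the neighbor of $v$ lying in $\XK\subseteq S$; if $K$ is maximal it is hit by $\XK\neq\emptyset$, so $S\setminus\{v\}$ remains a clique transversal, contradicting minimality. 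Conversely $I\setminus N(\XK)\subseteq\XI$: if $v\in I$ has no neighbor in $\XK$, then the maximal clique $N[v]$ can only be hit by $v$ itself (its vertices are $v$ and its neighbors, all of which lie in $K$, and none of those neighbors is in $S$ since $v\notin N(\XK)$), so $v\in S$, i.e. $v\in\XI$. For (iii), minimality applied to $s\in\XK$ yields a maximal clique $C$ with $C\cap S=\{s\}$; since $s\in K$ and $K$ is hit by all of $\XK$, I would argue $C\neq K$ when $|\XK|\ge 2$, and handle the clique $K$ and the closed-neighborhood cliques to locate the required private neighbor in $I$.

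\medskip
\textbf{Reverse direction} ((i)--(iii) $\Rightarrow$ $S$ a minimal clique transversal). I first check $S$ is a transversal: the clique $K$ (if maximal) is hit by (i); each clique $N[v]$ with $v\in I$ is hit either by $v$ itself when $v\in\XI=I\setminus N(\XK)$ by (ii), or otherwise by a neighbor of $v$ lying in $\XK$ when $v\in N(\XK)$. For minimality I must exhibit, for each $s\in S$, a maximal clique meeting $S$ only in $s$. If $s\in\XI=I\setminus N(\XK)$, then $N[s]$ works, since $s$'s neighbors lie in $K$ but none is in $\XK$ (as $s\notin N(\XK)$) and none is in $\XI$ (an independent set). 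If $s\in\XK$, condition (iii) supplies a $\XK$-private neighbor $w\in I$, and I would take $N[w]$: its only neighbor in $\XK$ is $s$, and $w\notin\XI$ because $w\in N(\XK)$ so $w\notin I\setminus N(\XK)=\XI$ by (ii).

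\medskip
\textbf{The main obstacle} I anticipate is bookkeeping in the minimality arguments: I must be careful that a candidate private clique is genuinely \emph{maximal} (not merely a clique) and that it avoids \emph{all} other elements of $S$ simultaneously, across both the $\XK$ and $\XI$ parts. The subtle point is the interaction between conditions (ii) and (iii): (ii) pins down $\XI$ exactly so that vertices of $\XI$ are neither dominated by $\XK$ nor accidentally blocked, while (iii) guarantees each $\XK$-vertex has its own witness clique $N[w]$ disjoint from the rest of $S$. Verifying that these two mechanisms do not collide — in particular that the private neighbor $w$ for $s\in\XK$ indeed lies outside $\XI$ and that $N[w]\cap\XK=\{s\}$ — is where the proof requires the most care, and is exactly where I would lean on the equality in (ii) rather than just an inclusion.
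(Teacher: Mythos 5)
Your proposal is correct and follows essentially the same route as the paper's proof: both directions are argued directly from the explicit list of maximal cliques ($N[v]$ for $v\in I$, plus possibly $K$), with the same two inclusions for condition (\ref{condition:domination-and-minimality}) and the same witness cliques $N[s]$ and $N[w]$ in the reverse direction. The one step your sketch leaves genuinely open is the forward direction of (\ref{condition:minimality-2}) when the witness clique for $s\in \XK$ is $K$ itself (which forces $\XK=\{s\}$): there you must invoke the maximality of $I$ to conclude that $s$ has a neighbor in $I$ at all, and that neighbor is then automatically $\XK$-private --- which is exactly how the paper handles its case $|\XK|=1$.
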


\begin{proof}
Assume first that $S$ is a minimal clique transversal of $G$.
We prove that $S$ satisfies each of the three conditions.
Condition~\eqref{condition:domination-2} follows from the fact that $S$ is a clique transversal.

To show~condition~\eqref{condition:domination-and-minimality}, we first show the inclusion $I\setminus S\subseteq N(\XK)$, which is equivalent to $I\setminus N(\XK) \subseteq I\setminus (I\setminus S) = I\cap S = \XI$.
Consider an arbitrary vertex $v\in I\setminus S$. Since $N[v]$ is a maximal clique in $G$ and $S$ is a clique transversal not containing $v$, set $S$ must contain a neighbor $w$ of $v$. As $N(v)\subseteq K$, we conclude that $w$ belongs to $\XK$.
The converse inclusion, $\XI \subseteq I\setminus N(\XK)$, is equivalent to the condition that there are no edges between $\XI$ and $\XK$.
Suppose for a contradiction that $G$ contains an edge $uv$ with $u\in \XI$ and $v\in \XK$. Since $N[u]$ is the only maximal clique of $G$ containing $u$ and $\{u,v\}\subseteq S\cap N[u]$, it follows that $S\setminus \{u\}$ is a clique transversal of $G$, contradicting the minimality of $S$. This establishes~\eqref{condition:domination-and-minimality}.

To show condition~\eqref{condition:minimality-2}, consider an arbitrary vertex $v\in \XK$. If $\XK = \{v\}$, then any neighbor of $v$ in $I$ is a $\XK$-private neighbor of $v$, and $v$ has a neighbor in $I$ since $I$ is a maximal independent set.
Thus we may assume that $|\XK|\ge 2$.
Suppose for a contradiction that $v$ does not contain any $\XK$-private neighbor in $I$. The maximal cliques of $G$ containing $v$ are $N[w]$ for $w\in N(v)\cap I$ and possibly $K$ (if $K$ is a maximal clique). For every $w\in N(v)\cap I$, the assumption on $v$ implies that there exists a vertex $v'\in \XK\setminus\{v\}$ adjacent to $w$; hence $\{v,v'\}\subseteq S\cap N[w]$. Moreover, we had already justified that $|\XK|\ge 2$. It follows that the set $S\setminus \{v\}$ intersects all maximal cliques in $G$; this contradicts the minimality of $S$ and shows condition~\eqref{condition:minimality-2}.

Assume now that $S$ is a set of vertices satisfying conditions \eqref{condition:domination-2}--\eqref{condition:minimality-2}.
We prove that $S$ is a minimal clique transversal by verifying both conditions in the definition. Consider an arbitrary maximal clique $C$ of $G$. If $C = N[v]$ for some $v\in I$, then either $v\in S$, in which case
$v\in S\cap C$, or $v\in I\setminus S$, in which case condition~\eqref{condition:domination-and-minimality} guarantees that $v$ has a neighbor $w\in \XK$; hence $w\in S\cap C$ and $S$ intersects $C$.
If $C = K$, then $S\cap C \neq \emptyset$ by condition~\eqref{condition:domination-2}. Hence $S$ is a clique transversal.
To show minimality, suppose for a contradiction that $S$ contains a vertex $v$ such that $S\setminus\{v\}$ is also a clique transversal of $G$.
Suppose that $v\in I$. Since the set $S\setminus\{v\}$ intersects the maximal clique $N[v]$, there is a vertex $w\in (S\setminus\{v\})\cap N[v]$.
Since $w\neq v$, we have $w\in N(v)$ and hence $w\in K$.
In particular, $w\in \XK$ and thus $v\in N(\XK)\cap \XI$;  this contradicts condition \eqref{condition:domination-and-minimality}. It follows that $v\not\in I$ and hence $v\in \XK$. Condition~\eqref{condition:minimality-2} implies that $v$ has a $\XK$-private neighbor $w\in I$. Since $N(w)\subseteq K$ and $w$ is a $\XK$-private neighbor of $v$, we have $S \cap N(w) = N(w)\cap S = N(w)\cap \XK = \{v\}$, which implies $(S\setminus \{v\})\cap N(w) = \emptyset$.
Moreover, condition \eqref{condition:domination-and-minimality} implies that $w\not\in S$; hence $(S\setminus \{v\})\cap \{w\} = \emptyset$.
It follows that
$(S\setminus \{v\})\cap N[w] = ((S\setminus \{v\})\cap N(w))\cup ((S\setminus \{v\})\cap \{w\}) = \emptyset$. Since the set $S\setminus\{v\}$ misses the maximal clique $N[w]$, it is not a clique transversal, a contradiction.
\end{proof}

\Cref{prop:MCT-split} leads to the following result about maximum minimal clique transversals in split graphs.
We denote by $\alpha(G)$ the \emph{independence number} of a graph $G$, that is, the maximum size of an independent set in $G$.

\begin{theorem}\label{thm:MMCT-split}
Let $G$ be a split graph with a split partition $(K,I)$ such that $I$ is a maximal independent set.
Then:
\begin{enumerate}
  \item If $K$ is not a maximal clique in $G$, then $I$ is a maximum minimal clique transversal in $G$; in particular, we have $\tau_c^+(G) = \alpha(G)$ in this case.
  \item If $K$ is a maximal clique in $G$, then for every vertex $v\in K$ with the smallest number of neighbors in $I$, the set $\{v\}\cup (I\setminus N(v))$ is a maximum minimal clique transversal in $G$; in particular, we have  $\tau_c^+(G) = \alpha(G)-\delta_G(I,K)+1$ in this case, where $\delta_G(I,K) = \min\{|N(v)\cap I|: v\in K\}$.
\end{enumerate}
Consequently, every split graph $G$ satisfies $\tau_c^+(G)\le \alpha(G)$.
\end{theorem}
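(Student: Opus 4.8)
The plan is to reduce everything to the characterization of minimal clique transversals in \Cref{prop:MCT-split} and then optimize the resulting size expression. First I would record the size formula: if $S$ is a minimal clique transversal with $\XK = K\cap S$ and $\XI = I\cap S$, then condition~\eqref{condition:domination-and-minimality} gives $\XI = I\setminus N(\XK)$, so that $|\XI| = |I| - |N(\XK)\cap I|$ and hence
\[
|S| = |\XK| + |I| - |N(\XK)\cap I|.
\]
I would also note at the outset that $\alpha(G) = |I|$: since $I$ is a maximal independent set we have $|I|\le\alpha(G)$, and conversely any independent set meets the clique $K$ in at most one vertex $v$, contributing at most $1 + |I\setminus N(v)| = 1 + |I| - |N(v)\cap I|\le |I|$, because maximality of $I$ forces every $v\in K$ to have a neighbor in $I$ (equivalently, $\delta_G(I,K)\ge 1$).

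The key step, which drives both the global bound $\tau_c^+(G)\le\alpha(G)$ and the exact values, is to lower-bound $|N(\XK)\cap I|$ using condition~\eqref{condition:minimality-2}. For each $v\in\XK$ I choose a $\XK$-private neighbor $w(v)\in I$, so that $N(w(v))\cap\XK = \{v\}$. The map $v\mapsto w(v)$ is injective (two vertices with a common private neighbor $w$ would both equal the unique element of $N(w)\cap\XK$) and lands in $N(\XK)\cap I$, giving $|N(\XK)\cap I|\ge|\XK|$ and hence $|S|\le|I| = \alpha(G)$ for every minimal clique transversal $S$; this is the ``Consequently'' clause. For part~(1), when $K$ is not a maximal clique, condition~\eqref{condition:domination-2} is vacuous, so I check directly that $S = I$ (i.e.\ $\XK=\emptyset$) satisfies all three conditions of \Cref{prop:MCT-split} and is therefore a minimal clique transversal of size $|I| = \alpha(G)$; combined with the bound just proved, $I$ is maximum and $\tau_c^+(G)=\alpha(G)$.

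For part~(2), when $K$ is maximal, I would first verify that the proposed set $S_0 = \{v\}\cup(I\setminus N(v))$, with $v$ minimizing $|N(v)\cap I|$, satisfies the three conditions: here $\XK=\{v\}$ is nonempty, $\XI = I\setminus N(v)$ matches~\eqref{condition:domination-and-minimality}, and any neighbor of $v$ in $I$ (which exists by maximality of $I$) serves as a $\{v\}$-private neighbor. Thus $S_0$ is a minimal clique transversal of size $1 + |I| - \delta_G(I,K) = \alpha(G) - \delta_G(I,K) + 1$. The remaining obstacle is the matching upper bound, which requires sharpening $|N(\XK)\cap I|\ge|\XK|$ by an extra slack of $\delta_G(I,K)-1$. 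Since condition~\eqref{condition:domination-2} now forces $\XK\neq\emptyset$, I fix a vertex $v_0\in\XK$ and observe that $N(v_0)\cap I$ (of size at least $\delta_G(I,K)$) is disjoint from the private neighbors $\{w(v):v\in\XK\setminus\{v_0\}\}$: indeed each such $w(v)$ satisfies $N(w(v))\cap\XK = \{v\}$ with $v\neq v_0$, hence $w(v)\notin N(v_0)$. Both families lie in $N(\XK)\cap I$, so $|N(\XK)\cap I|\ge\delta_G(I,K) + (|\XK|-1)$, and substituting into the size formula yields $|S|\le|I| - \delta_G(I,K) + 1$, exactly the value attained by $S_0$. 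I expect this disjointness-plus-counting argument to be the delicate point, as it hinges on spending one element of $\XK$ on its degree lower bound and the remaining elements on their private neighbors without any double-counting.
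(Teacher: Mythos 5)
Your proof is correct, and it takes a genuinely different route from the paper's. The paper fixes a maximum minimal clique transversal $S$ that, among all such, minimizes $|K\cap S|$, and runs an exchange argument: if $|K\cap S|\ge 2$, replacing a vertex $v\in K\cap S$ by its (nonempty) set of $(K\cap S)$-private neighbors in $I$ yields another minimal clique transversal that is at least as large but uses fewer vertices of $K$, a contradiction; hence an optimal $S$ satisfies $|K\cap S|\le 1$ and the two cases of the theorem can be read off directly. You instead prove, for \emph{every} minimal clique transversal $S$, the upper bound $|S|\le |I|$ (resp.\ $|S|\le |I|-\delta_G(I,K)+1$ when $K$ is maximal) by pure counting: starting from $|S|=|K\cap S|+|I|-|N(K\cap S)\cap I|$, you inject $K\cap S$ into $N(K\cap S)\cap I$ via private neighbors, and in the case where $K$ is maximal you sharpen this by noting that the at least $\delta_G(I,K)$ neighbors in $I$ of one fixed vertex $v_0\in K\cap S$ are disjoint from the private neighbors of the remaining $|K\cap S|-1$ vertices, since a $(K\cap S)$-private neighbor of $v\neq v_0$ is by definition non-adjacent to $v_0$. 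Both proofs then exhibit the same explicit candidate sets and verify them against \Cref{prop:MCT-split}. Your counting argument is arguably leaner, as it avoids re-verifying that the exchanged set is again a minimal clique transversal; the paper's exchange argument, on the other hand, makes structurally explicit that any optimum can be normalized to use at most one vertex of $K$. Your explicit justification that $\alpha(G)=|I|$ (used only implicitly in the paper) is also correct and welcome.
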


\begin{sloppypar}
\begin{proof}
Let $S$ be a minimal clique transversal of $G$ that is of maximum possible size and, subject to this condition, contains as few vertices from $K$ as possible. Let $\XK = K\cap S$ and $\XI = I\cap S$.
If $\XK = \emptyset$, then $K$ is not a maximal clique in $G$, and we have $S = I$, implying $\tau_c^+(G) = |S| = \alpha(G)$.
Suppose now that $\XK\neq\emptyset$. We first show that $|\XK|= 1$. Suppose for a contradiction that $|\XK|\ge 2$ and let $v\in \XK$.
Let $I_v$ denote the set of $\XK$-private neighbors of $v$ in $I$ and let $S' = (S\setminus \{v\})\cup I_v$.
Let $I_v$ denote the set of $\XK$-private neighbors of $v$ in $I$ and let $S' = (S\setminus \{v\})\cup I_v$.
By \Cref{prop:MCT-split}, conditions~\eqref{condition:domination-2}--\eqref{condition:minimality-2} hold for $S$.
We claim that set $S'$ also satisfies conditions~\eqref{condition:domination-2}--\eqref{condition:minimality-2} from \Cref{prop:MCT-split}.
Since $S'\cap K = \XK\setminus\{v\}$, the assumption $|\XK|\ge 2$ implies that $S'\cap K\neq\emptyset$, thus condition~\eqref{condition:domination-2} holds for $S'$.
Since condition~\eqref{condition:domination-and-minimality} holds for $S$, we have
\[I\cap S' = I'\cup I_v = (I\setminus N(K'))\cup I_v =   I\setminus N(\XK\setminus\{v\}) = I\setminus N(S'\cap K)\,,\]
that is, condition~\eqref{condition:domination-and-minimality} holds for $S'$.
Finally, since $S'\cap K \subseteq \XK$, condition~\eqref{condition:minimality-2} for $S$ immediately implies
condition~\eqref{condition:minimality-2} for $S'$.
It follows that $S'$ is a minimal clique transversal in $G$.
Furthermore, since $v\in \XK$, vertex $v$ has an $\XK$-private neighbor in $I$, that is, the set $I_v$ is nonempty.
This implies that $|S'|\ge |S|$; in particular, $S'$ is a maximum minimal clique transversal in $G$.
However, $S'$ contains strictly fewer vertices from $K$ than $S$, contradicting the choice of $S$.
This shows that $|\XK|= 1$, as claimed.

Let $w$ be the unique vertex in $\XK$.
Since Condition~\eqref{condition:domination-and-minimality} from \Cref{prop:MCT-split} holds for $S$, we have $\XI = I \setminus N(w)$.
Hence $S = \{w\}\cup (I \setminus N(w))$ and
$|S| = 1+ |I|-|N(w)\cap I|$.
Since $w\in K$, we have $|N(w)\cap I|\ge \delta_G(I,K)$ and hence $\tau_c^+(G)= |S| \le \alpha(G)-\delta_G(I,K)+1$.
On the other hand, for every vertex $z\in K$ the set
$X_z:=\{z\}\cup (I \setminus N(z))$ satisfies conditions~\eqref{condition:domination-2}--\eqref{condition:minimality-2}
from \Cref{prop:MCT-split}. Conditions~\eqref{condition:domination-2} and~\eqref{condition:domination-and-minimality} hold by the definition of $X_z$. Since $I$ is a maximal independent set in $G$, vertex $z\not\in I$ has a neighbor in $I$, and any neighbor of $z$ in $I$ is trivially an $(X_z\cap K)$-private neighbor of $z$. Thus Condition~\eqref{condition:minimality-2} holds, too. It follows that $X_z$ is a minimal clique transversal in $G$.
Choosing $z$ to be a vertex in $K$ with the smallest number of neighbors in $I$, we obtain a set $X_z$ of size $\alpha(G)-\delta_G(I,K)+1$.
Thus $\tau_c^+(G)\ge |X_z| = \alpha(G)-\delta_G(I,K)+1$ and since we already proved that $\tau_c^+(G)\le \alpha(G)-\delta_G(I,K)+1$, any such $X_z$ is optimal.

Since $I$ is a maximal independent set and $K$ is nonempty, we have \hbox{$\delta_G(I,K)\ge 1$}.
Thus, $\tau_c^+(G) \le \alpha(G)$.
Suppose that $K$ is not a maximal clique in $G$. Then $I$ is a minimal clique transversal in $G$ and therefore $\tau_c^+(G)\ge |I| = \alpha(G)\ge \tau_c^+(G)$. Hence equalities must hold throughout and $I$ is a maximum minimal clique transversal.
Finally, suppose that $K$ is a maximal clique in $G$. Then every minimal clique transversal $S$ in $G$ satisfies $S\cap K\neq\emptyset$.
In this case, the above analysis shows that for every vertex $v\in K$ with the smallest number of neighbors in $I$, the set $\{v\}\cup (I\setminus N(v))$ is a maximum minimal clique transversal in $G$.
\end{proof}
\end{sloppypar}

\begin{corollary}\label{cor:linear-split}
{\sc Upper Clique Transversal} can be solved in linear time in the class of split graphs.
\end{corollary}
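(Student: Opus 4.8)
The plan is to show that \Cref{thm:MMCT-split} reduces the computation of $\tau_c^+(G)$ to a handful of elementary quantities, each obtainable in linear time. First I would compute a split partition $(K,I)$ of the input split graph $G$; this can be done in linear time using a standard linear-time recognition algorithm for split graphs (a split partition can be read off from the degree sequence). Next, I would ensure that $I$ is a maximal independent set exactly as in the paragraph preceding \Cref{prop:MCT-split}: while some vertex $v\in K$ has no neighbor in $I$, move $v$ from $K$ to $I$. Since the total work is bounded by a single scan of the adjacency lists, this preprocessing also runs in linear time, and afterwards the hypotheses of \Cref{thm:MMCT-split} are met.

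With $(K,I)$ in hand and $I$ maximal independent, \Cref{thm:MMCT-split} yields a closed-form value for $\tau_c^+(G)$, so it remains only to evaluate it. I would first test whether $K$ is a maximal clique. Because every vertex of $I$ has all of its neighbors in $K$, the clique $K$ fails to be maximal precisely when some $v\in I$ is adjacent to all of $K$, i.e.\ when $\deg(v)=|K|$; this is checked in linear time by one pass over $I$. If $K$ is not a maximal clique, then by \Cref{thm:MMCT-split} the set $I$ is a maximum minimal clique transversal, so $\tau_c^+(G)=|I|$. Otherwise, \Cref{thm:MMCT-split} tells us that a maximum minimal clique transversal has the form $\{v\}\cup(I\setminus N(v))$ for a vertex $v\in K$ minimizing $|N(v)\cap I|$, of size $1+|I|-|N(v)\cap I|$. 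Since such a $v$ is adjacent to every other vertex of $K$, we have $|N(v)\cap I|=\deg(v)-(|K|-1)$, so minimizing $|N(v)\cap I|$ over $v\in K$ amounts to finding a minimum-degree vertex of $K$, giving $\tau_c^+(G)=|I|+|K|-\min_{v\in K}\deg(v)$. Both cases are evaluated in linear time.

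Finally, the algorithm returns \textsf{yes} if and only if the computed value $\tau_c^+(G)$ is at least $k$, and if an explicit witness is required, \Cref{thm:MMCT-split} supplies one directly: the set $I$ in the first case, and $\{v\}\cup(I\setminus N(v))$ for a minimum-degree vertex $v\in K$ in the second, each constructible in linear time. The only point requiring any care is the computation of the partition together with the preprocessing that makes $I$ maximal; once that is done, every remaining quantity is a single aggregate over the adjacency structure, so the dominant cost is the $\mathcal{O}(|V(G)|+|E(G)|)$ needed to read the graph and compute the split partition.
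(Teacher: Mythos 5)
Your proposal is correct and follows essentially the same route as the paper: compute a split partition, adjust it so that $I$ is a maximal independent set, test whether $K$ is a maximal clique, and then read off the answer from \Cref{thm:MMCT-split}. The only (harmless) difference is that you obtain $|N(v)\cap I|$ for $v\in K$ via the identity $|N(v)\cap I|=\deg(v)-(|K|-1)$ rather than by directly counting neighbors in $I$, which is an equally valid linear-time implementation.
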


\begin{proof}
Let $G = (V,E)$ be a given split graph. Hammer and Simeone showed that split graphs can be characterized by their degree sequences; furthermore, that characterization yields a linear-time algorithm to compute a split partition $(K,I)$ of $G$ (see~\cite{MR637832}).
If there exists a vertex in $K$ that is not adjacent to $I$, then we move it to $I$.
Thus, in linear time we can compute a split partition $(K,I)$ of $G$ such that $I$ is a maximal independent set.
Clearly, $K$ is a maximal clique if and only if no vertex in $I$ is adjacent to all vertices of $G$.
If $K$ is not a maximal clique, then the algorithm simply returns $I$.
If $K$ is a maximal clique, then the algorithm first computes, for each vertex $v\in K$, the number of neighbors of $v$ in $I$.
For a vertex $v\in K$ with the smallest number of neighbors in $I$, the set $\{v\}\cup (I\setminus N(v))$ is returned.
\end{proof}

\begin{sloppypar}
\begin{remark}
Recall that a \emph{strong independent set} in a graph $G$ is an independent clique transversal.
If $I$ is a strong independent set in $G$, then for every vertex $v\in I$, every maximal clique $K$ containing $v$ satisfies $K\cap I = \{v\}$; it follows that every strong independent set is a minimal clique transversal. \Cref{thm:MMCT-split} implies that every split graph has a maximum minimal clique transversal that is independent, that is, it is a strong independent set.
Consequently, the problem of computing a maximum minimal clique transversal of a split graph $G$ reduces to the problem of computing a maximum strong independent set in $G$. A linear-time algorithm for a more general problem, that of computing a maximum weight strong independent set in a vertex-weighted chordal graph, was developed by Wu~\cite{MR1325490}. This gives an alternative proof of~\Cref{cor:linear-split}.
\end{remark}
\end{sloppypar}

\begin{sloppypar}
\section{A linear-time algorithm for UCT in proper interval graphs}\label{sec:PIGs}
\end{sloppypar}

A graph $G = (V,E)$ is an \emph{interval graph} if it has an \emph{interval representation}, that is, if its vertices can be put in a one-to-one correspondence with a family $(I_v:v\in V)$ of closed intervals on the real line such that two distinct vertices $u$ and $v$ are adjacent if and only if the corresponding intervals $I_u$ and $I_v$ intersect. If $G$ has a \emph{proper interval representation}, that is, an interval representation in which no interval contains another, then $G$ is said to be a \emph{proper interval graph}.

Our approach towards a linear-time algorithm for {\sc Upper Clique Transversal} in the class of proper interval graphs is based on a relation between clique transversals in $G$ and induced matchings in the so-called vertex-clique incidence graph of $G$.
This relation is valid for arbitrary graphs.

\bigskip

\subsection{UCT via induced matchings in the vertex-clique incidence graph}

Given a graph $G=(V,E)$, we denote by $B_G$ the \emph{vertex-clique incidence graph} of $G$, a bipartite graph defined as follows.
The vertex set of $B_G$ consists of two disjoint sets $X$ and $Y$ such that $X = V$ and $Y =  {\mathcal C}_G$, where $ {\mathcal C}_G$ is the set of maximal cliques in $G$.
The edge set of $B_G$ consists of all pairs $x\in X$ and $C\in {\mathcal C}_G$ that satisfy $x\in C$.	
An \emph{induced matching} in a graph $G$ is a set $M$ of pairwise disjoint edges such that the set of endpoints of edges in $M$ induces no edges other than those in $M$.
Given two disjoint sets of vertices $A$ and $B$ in a graph $G$, we say that $A$ \emph{dominates $B$ in $G$} if every vertex in $B$ has a neighbor in $A$.
Given a matching $M$ in a graph $G$ and a vertex $v\in V(G)$, we say that $v$ is \emph{$M\!$-saturated} if it is an endpoint of an edge in $M$.

Clique transversals and minimal clique transversals of a graph $G$ can be expressed in terms of the vertex-clique incidence graph as follows.

\begin{lemma}\label{lem:clique-transversals-via-B_G}
Let $G$ be a graph, let $B_G = (X,Y;E)$ be its vertex-clique incidence graph, and let $S\subseteq V(G)$.
Then:
\begin{enumerate}
\item $S$ is a clique transversal in $G$ if and only if $S$ dominates $Y$ in $B_G$.
\item $S$ is a minimal clique transversal in $G$ if and only if
$S$ dominates $Y$ in $B_G$ and there exists an induced matching $M$ in $B_G$ such that $S$ is exactly the set of
$M\!$-saturated vertices in~$X$.
\end{enumerate}
\end{lemma}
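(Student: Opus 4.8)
The plan is to prove both statements directly from the definitions, translating the combinatorial conditions on $G$ into adjacency conditions in $B_G$. For the first statement, I would observe that a maximal clique $C$ of $G$, viewed as a vertex of $Y$, has as its $B_G$-neighbors exactly the elements of $C$; hence a set $S\subseteq X = V(G)$ meets $C$ (as subsets of $V(G)$) if and only if some element of $S$ is adjacent to $C$ in $B_G$. Quantifying over all $C\in Y$, this says precisely that $S$ is a clique transversal of $G$ if and only if every vertex of $Y$ has a neighbor in $S$, i.e.\ $S$ dominates $Y$.

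For the second statement I would first record the standard reformulation of minimality: a clique transversal $S$ is minimal if and only if for every $u\in S$ there is a maximal clique $C_u$ of $G$ with $C_u\cap S = \{u\}$ (such a $C_u$ witnesses that $S\setminus\{u\}$ fails to be a transversal, using that $S$ itself meets $C_u$). For the forward direction, assume $S$ is a minimal clique transversal; the first statement gives that $S$ dominates $Y$, and for each $u\in S$ I pick a witness clique $C_u$ and set $M = \{\,uC_u : u\in S\,\}$. I would then check that $M$ is an induced matching whose set of $M$-saturated vertices in $X$ is exactly $S$: the $X$-endpoints are the distinct elements of $S$; the map $u\mapsto C_u$ is injective because $C_u\cap S = \{u\}$ forces distinct cliques for distinct $u$; and $M$ is induced because the only element of $S$ adjacent to $C_u$ in $B_G$ is $u$, so no endpoint of $M$ in $X$ other than $u$ is adjacent to $C_u$.

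For the backward direction, assume $S$ dominates $Y$ and $M$ is an induced matching whose $M$-saturated vertices in $X$ are exactly $S$. The first statement already yields that $S$ is a clique transversal, so it remains to certify minimality. Given $u\in S$, let $uC_u\in M$ be the matching edge saturating $u$; then $u\in C_u$, and for any other $u'\in S$ the pair $u'C_u$ cannot be an edge of $B_G$, since $u'$ is an endpoint of $M$ and such an edge would lie in the subgraph induced by the endpoints of $M$ while not belonging to $M$, contradicting that $M$ is induced. Hence $C_u\cap S = \{u\}$, which is exactly the minimality witness for $u$, and $S$ is a minimal clique transversal.

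The only genuinely delicate point, and the one I would state most carefully, is the equivalence between the induced property of the matching and the private-clique condition $C_u\cap S = \{u\}$: extra edges in the subgraph induced by the endpoints of $M$ correspond precisely to elements of $S\setminus\{u\}$ lying in $C_u$, so the induced-matching requirement is exactly what encodes that each saturated vertex has a private maximal clique. Everything else is bookkeeping about which endpoints of $M$ lie in $X$ versus $Y$.
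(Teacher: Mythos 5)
Your proof is correct and follows essentially the same route as the paper's: the first part is the direct translation of ``$S$ meets every maximal clique'' into domination of $Y$, and the second part constructs the induced matching from private witness cliques $C_u$ with $C_u\cap S=\{u\}$ and, conversely, reads off these private cliques from the induced-matching property. The checks you flag (injectivity of $u\mapsto C_u$ and the equivalence between inducedness and privacy) are exactly the ones the paper verifies.
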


\begin{proof}
The first statement follows immediately from the definitions.

For the second statement, we prove each of the two implications separately.
Assume first that $S$ is a minimal clique transversal in $G$.
Since $S$ is a clique transversal in $G$, it dominates $Y$ in $B_G$. Furthermore, the minimality of $S$ implies that for every vertex $s\in S$ there exists
a maximal clique $y_s\in Y (= \mathcal{C}_G)$ such that $y_s\cap S = \{s\}$. Let $M = \{\{s,y_s\}\mid s\in S\}$.
We claim that $M$ is an induced matching $M$ in $B_G$ such that $S$ is exactly the set of $M\!$-saturated vertices in $X$.
First, note that each $s\in S$ is adjacent in $B_G$ to $y_s$, since $s$ belongs to the maximal clique $y_s$. Second, $M$ is a matching in $B_G$ since every $s\in S$ is by construction incident with only one edge in $M$, and if $y_{s_1} = y_{s_2}$ for two vertices $s_1,s_2\in S$, then
$\{s_1\} = y_{s_1}\cap S = y_{s_2}\cap S = \{s_2\}$ and thus $s_1 = s_2$. Third, $M$ is an induced matching in $B_G$, since otherwise $B_G$ would contain an edge of the form $\{s_1,y_{s_2}\}$ for two distinct vertices $s_1,s_2\in S$,
which would imply that $s_1$ belongs to the maximal clique
$y_{s_2}$, contradicting the fact that $y_{s_2}\cap S = \{s_2\}$. Finally, the fact that $S$ is exactly the set of
$M\!$-saturated vertices in $X$ follows directly from the definition of $M$.

For the converse direction, assume that $S$ dominates $Y$ in $B_G$ and there exists an induced matching $M$ in $B_G$ such that $S$ is exactly the set of $M\!$-saturated vertices in $X$.
The fact that $S$ dominates $Y$ in $B_G$ implies that $S$ is a clique transversal in $G$. To see that $S$ is a minimal clique transversal, we will show that for every $s\in S$, the set $S\setminus \{s\}$ misses a maximal clique in $G$. Let $s\in S$. By the assumptions on $M$, vertex $s$ has a unique neighbor $y_s$ in $B_G$ such that $\{s,y_s\}$ is an edge of $M$. Furthermore, since $M$ is an induced matching in $B_G$,
vertex $y_s$ is not adjacent in $B_G$ to any vertex in $S\setminus \{s\}$. Thus, the set $S\setminus \{s\}$ misses the maximal clique $y_s$. We conclude that $S$ is a minimal clique transversal.
\end{proof}

The \emph{induced matching number} of a graph $G$ is the maximum size of an induced matching in~$G$.
\Cref{lem:clique-transversals-via-B_G} immediately implies the following.

\begin{corollary}\label{cor:bound}
For every graph $G$, the upper clique transversal number of $G$ is at most the induced matching number of $B_G$.
\end{corollary}

As another consequence of \Cref{lem:clique-transversals-via-B_G}, we obtain a sufficient condition for a set of vertices in a graph to be a minimal clique transversal of maximum size.

\begin{corollary}\label{cor:optimality}
Let $G$ be a graph, let $B_G = (X,Y;E)$ be its vertex-clique incidence graph, and let $S\subseteq V(G)$.
Suppose that $S$ dominates $Y$ in $B_G$ and there exists a maximum induced matching $M$ in $B_G$ such that $S$ is exactly the set of
$M\!$-saturated vertices in $X$. Then, $S$ is a minimal clique transversal in $G$ of maximum size.
\end{corollary}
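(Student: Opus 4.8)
The plan is to derive \Cref{cor:optimality} directly from the two established facts about the vertex-clique incidence graph, namely \Cref{lem:clique-transversals-via-B_G} (which characterizes minimal clique transversals) and \Cref{cor:bound} (which bounds $\tau_c^+(G)$ by the induced matching number of $B_G$). First I would invoke the second part of \Cref{lem:clique-transversals-via-B_G}: since $S$ dominates $Y$ in $B_G$ and there is an induced matching $M$ whose set of $M\!$-saturated vertices in $X$ is exactly $S$, the lemma immediately certifies that $S$ is a \emph{minimal} clique transversal in $G$. So minimality comes for free; the only remaining task is to show that $S$ has maximum size among all minimal clique transversals.

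For the size bound, the key observation is that the number of $M\!$-saturated vertices in $X$ equals $|M|$, since $M$ is a matching in the bipartite graph $B_G = (X,Y;E)$ and every edge of $M$ has exactly one endpoint in $X$ and one in $Y$. Hence $|S| = |M|$. Because $M$ is by hypothesis a \emph{maximum} induced matching in $B_G$, we have $|S| = |M| = \nu$, where $\nu$ denotes the induced matching number of $B_G$. On the other hand, \Cref{cor:bound} tells us that every minimal clique transversal of $G$ has size at most $\nu$. Combining these, $|S| = \nu \ge \tau_c^+(G) \ge |S|$, where the last inequality holds because $S$ is itself a minimal clique transversal (so its size is a lower bound for the maximum). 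Therefore equality holds throughout, and $S$ is a minimal clique transversal of maximum size.

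There is essentially no hard part: the corollary is a straightforward assembly of the preceding two results, with the only substantive point being the bipartite bookkeeping that each edge of the matching $M$ saturates exactly one vertex of $X$, so that $|S|$ equals $|M|$ rather than something smaller. I would state this explicitly to avoid any ambiguity about whether distinct edges of $M$ could saturate the same $X$-vertex, which cannot happen since $M$ is a matching. The whole argument is a short two-line chain of inequalities pinned at both ends by $|S|$.
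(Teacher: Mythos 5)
Your argument is correct and is exactly the reasoning the paper leaves implicit (the corollary is stated without proof as a direct consequence of \Cref{lem:clique-transversals-via-B_G} and \Cref{cor:bound}): minimality from part 2 of the lemma, and maximality from $|S|=|M|$ being the induced matching number combined with the upper bound of \Cref{cor:bound}. Your explicit remark that each edge of the matching saturates exactly one vertex of $X$ is a worthwhile clarification, but the approach is the same.
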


To apply \Cref{cor:optimality} to proper interval graphs, we first state several characterizations of proper interval graphs in terms of their vertex-clique incidence graphs, establishing in particular a connection with bipartite permutation graphs.

\subsection{Characterizing proper interval graphs via their vertex-clique incidence graphs}

We first recall some concepts and results from the literature.
A bipartite graph $G = (X,Y;E)$ is said to be \emph{biconvex} if there exists a \emph{biconvex ordering} of (the vertex set of) $G$, that is, a pair $(<_X,<_Y)$ where $<_X$ is a linear ordering of $X$ and  $<_Y$ is a linear ordering of $Y$ such that for every $x\in X$, the vertices in $Y$ adjacent to $x$ appear consecutively with respect to the ordering $<_Y$, and, similarly, for every $y\in Y$, the vertices in $X$ adjacent to $y$ appear consecutively with respect to the ordering $<_X$.

We will need the following property of biconvex graphs.
Let $(<_X,<_Y)$ be a biconvex ordering of a biconvex graph $G = (X,Y;E)$.
Two edges $e$ and $f$ of $G$ are said to \emph{cross} (each other) if
there exist vertices $x_1,x_2\in X$ and $y_1,y_2\in Y$ such that
$\{e,f\} = \{\{x_1,y_2\},\{x_2,y_1\}\}$, $x_1<_X x_2$, and $y_1<_Y y_2$.
A biconvex ordering $(<_X,<_Y)$ of a biconvex graph $G = (X,Y;E)$ is said to be
\emph{induced-crossing-free} if for any two crossing edges $e = \{x_1,y_2\}$ and $f = \{x_2,y_1\}$,
either $x_1$ is adjacent to $y_1$ or $x_2$ is adjacent to $y_2$.

\begin{theorem}[Abbas and Stewart~\cite{MR1762199}]\label{thm:induced-crossing-free}
	Every biconvex graph has an induced-crossing-free biconvex ordering.
\end{theorem}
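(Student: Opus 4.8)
The plan is to prove the statement by an extremal (local-search) argument. I would start from an arbitrary biconvex ordering $(<_X,<_Y)$, which exists because $G$ is biconvex, and show that among all biconvex orderings, one that minimizes a suitable defect potential is automatically induced-crossing-free. Call a pair of crossing edges $e=\{x_1,y_2\}$, $f=\{x_2,y_1\}$ with $x_1<_X x_2$ and $y_1<_Y y_2$ a \emph{bad crossing} if neither $x_1$ is adjacent to $y_1$ nor $x_2$ is adjacent to $y_2$; thus an ordering is induced-crossing-free exactly when it has no bad crossing, and the goal is to produce a biconvex ordering whose number of bad crossings is zero.

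A first useful reduction is to connected graphs. Bad crossings can arise between different connected components: if $x_1,y_2$ lie in one component and $x_2,y_1$ in another, then $x_1$ is non-adjacent to $y_1$ and $x_2$ is non-adjacent to $y_2$ automatically, so any such crossing is bad. To avoid this I would order the components as contiguous blocks appearing in the same left-to-right order on the $X$-side and on the $Y$-side; then no two edges from different components cross, and the resulting ordering is biconvex provided each block is. Hence it suffices to find an induced-crossing-free biconvex ordering of each connected component separately, and I may assume $G$ is connected.

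Now I would fix a biconvex ordering of the connected graph $G$ minimizing the number of bad crossings and suppose, for contradiction, that this number is positive. Among all bad crossings I would select an \emph{innermost} one, minimizing the number of vertices of $Y$ strictly between $y_1$ and $y_2$ and, subject to that, the number of vertices of $X$ strictly between $x_1$ and $x_2$. Using that the neighborhoods $N(x_1),N(x_2)\subseteq Y$ and $N(y_1),N(y_2)\subseteq X$ are all intervals in the respective orders, together with the defining inequalities of a bad crossing (for instance $y_1\notin N(x_1)$ but $y_2\in N(x_1)$ forces the whole interval $N(x_1)$ to lie $<_Y$-above $y_1$), I would aim to reduce to the case where $y_1,y_2$ are consecutive in $<_Y$ and $x_1,x_2$ are consecutive in $<_X$. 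The intended move is then the adjacent transposition that exchanges $x_1$ and $x_2$ (and, symmetrically, $y_1$ and $y_2$ if needed), which turns this bad crossing into a non-crossing parallel pair.

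The main obstacle is to show that such a transposition preserves biconvexity and strictly decreases the number of bad crossings. Preservation of biconvexity is delicate: swapping two consecutive elements $x_1,x_2$ destroys the consecutiveness of some $N(y)$ exactly when precisely one of $x_1,x_2$ lies in $N(y)$ and sits at the boundary of that interval. The heart of the proof is to rule out these boundary situations for an innermost bad crossing, by combining the interval property of each $N(y)$ with the edges and non-edges prescribed by the crossing; this is where the consecutiveness structure is used in full, and it is the step I expect to require the most careful case analysis. Granting it, the transposition yields a biconvex ordering with strictly fewer bad crossings, contradicting minimality. An alternative route, which faces the same essential difficulty at the reinsertion step, is induction on $|V(G)|$: delete an extreme vertex of the ordering (whose neighborhood is a prefix or a suffix), obtain an induced-crossing-free biconvex ordering of the smaller graph by induction, and insert the vertex back at the corresponding end without creating a crossing.
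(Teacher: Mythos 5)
The paper does not prove this statement at all; it is imported as a black box from Abbas and Stewart~\cite{MR1762199}, so there is no internal proof to compare against. Judged on its own terms, your proposal has a genuine gap at exactly the step you defer: the claim that the adjacent transposition preserves biconvexity. In fact, as described, the move fails. Suppose $x_1,x_2$ are consecutive with $x_1<_X x_2$ and $\{x_1,y_2\},\{x_2,y_1\}$ is a bad crossing. Then $y_2$ itself is a ``boundary'' witness: $x_1\in N(y_2)$ but $x_2\notin N(y_2)$, so $N(y_2)$ is an interval ending at the position of $x_1$; after swapping $x_1$ and $x_2$, the set $N(y_2)$ occupies the positions $[a,p-1]\cup\{p+1\}$ and is no longer consecutive unless $N(y_2)=\{x_1\}$. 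Symmetrically $N(y_1)$ breaks unless $N(y_1)=\{x_2\}$. So the obstruction you hope to ``rule out by careful case analysis'' is not an exceptional configuration --- it is forced by the very definition of the crossing you are trying to remove, and in a connected graph with $|N(y_1)|\ge 2$ or $|N(y_2)|\ge 2$ the swap destroys biconvexity. Swapping on both sides simultaneously, which you offer as a fallback, does not help either: if $x_1\leftrightarrow x_2$ and $y_1\leftrightarrow y_2$ are both exchanged, the two edges still cross in the new ordering, so the potential does not decrease.

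There are further unproved reductions: that an innermost bad crossing has $x_1,x_2$ consecutive and $y_1,y_2$ consecutive is asserted from the interval structure of the neighborhoods but not derived (a vertex strictly between $x_1$ and $x_2$ need not participate in any smaller bad crossing), and the inductive alternative is acknowledged to hit the same wall at reinsertion. The statement is true, but the known proof is a substantially more involved constructive argument; a single adjacent-transposition local search on a fixed potential does not go through, so the proposal cannot be accepted as a proof.
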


Given a bipartite graph $G = (X,Y;E)$, a \emph{strongly induced-crossing-free ordering} (or simply a \emph{strong ordering}) of $G$ is a pair $(<_X,<_Y)$ of linear orderings of $X$ and $Y$ such that for any two crossing edges $e = \{x_1,y_2\}$ and $f = \{x_2,y_1\}$, vertex $x_1$ is adjacent to $y_1$ and vertex $x_2$ is adjacent to $y_2$.

A \emph{permutation graph} is a graph $G=(V,E)$ that admits a permutation model, that is, vertices of $G$ can be ordered $v_1,\ldots, v_n$ such that there exists a permutation $(a_1,\ldots, a_n)$ of the set $\{1,\ldots, n\}$ such that for all $1\le i< j\le n$, vertices $v_i$ and $v_j$ are adjacent in $G$ if and only if $a_i>a_j$.
A \emph{bipartite permutation graph} is a graph that is both a bipartite graph and a permutation graph.

The following characterization of bipartite permutation graphs follows from Theorem 1 in~\cite{MR917130} and its proof.

\begin{theorem}[Spinrad, Brandst\"{a}dt, and Stewart~\cite{MR917130}]\label{thm:strongly-induced-crossing-free}
The following statements are equivalent for a bipartite graph $G = (X,Y;E)$:
\begin{enumerate}
\item $G$ is a bipartite permutation graph.
\item $G$ has a strong ordering.
\item $G$ has a strong biconvex ordering.
\end{enumerate}
\end{theorem}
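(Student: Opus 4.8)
The plan is to establish the equivalences by proving a cycle of implications, say $(1)\Rightarrow(2)\Rightarrow(3)\Rightarrow(1)$, exploiting the fact that the statement is essentially extracted from Theorem~1 of Spinrad, Brandst\"adt, and Stewart~\cite{MR917130} together with its proof. First I would address the implication $(1)\Rightarrow(2)$: starting from a permutation model of the bipartite permutation graph $G$, I would read off linear orderings $<_X$ and $<_Y$ of the two color classes and verify that any two crossing edges force the two ``diagonal'' pairs to be adjacent. The key observation is that in a permutation diagram, if $x_1<_X x_2$ and $y_1<_Y y_2$ with edges $\{x_1,y_2\}$ and $\{x_2,y_1\}$ present (a crossing), then the line segments representing the vertices are arranged so that the segments for $x_1,y_1$ and for $x_2,y_2$ must also cross, yielding the required edges; this is precisely the geometric content of the ``strong'' condition.

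Next I would prove $(2)\Rightarrow(3)$, which is the step I expect to be the main obstacle. Here one must upgrade an arbitrary strong ordering to a strong ordering that is simultaneously \emph{biconvex}, i.e.\ one in which neighborhoods appear consecutively on both sides. The natural approach is to invoke \Cref{thm:induced-crossing-free}: since $G$ is a bipartite permutation graph, it is in particular biconvex, so it admits an induced-crossing-free biconvex ordering by Abbas and Stewart. The delicate point is to argue that for a bipartite permutation graph this induced-crossing-free biconvex ordering is in fact \emph{strong}, not merely induced-crossing-free. The gap between the two notions is exactly that the induced-crossing-free condition guarantees \emph{at least one} of the two diagonal adjacencies for each pair of crossing edges, whereas the strong condition demands \emph{both}. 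To close this gap I would use the extra structure of bipartite permutation graphs (e.g.\ the absence of an induced structure that would permit exactly one diagonal edge) to promote the disjunctive guarantee into a conjunctive one; alternatively, one can modify the given strong ordering from step $(2)$ by a consecutive-rearrangement argument to make neighborhoods consecutive while preserving strongness. This is the technically subtle part and is where the cited proof of~\cite{MR917130} does the real work.

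Finally, $(3)\Rightarrow(1)$ is the easiest direction: given a strong biconvex ordering $(<_X,<_Y)$, I would reconstruct a permutation model directly from the orderings, placing the vertices of $X$ and $Y$ on the two horizontal lines according to $<_X$ and $<_Y$ and checking that adjacency in $G$ corresponds exactly to crossing of the associated segments. The strongness condition is precisely what guarantees that no ``forbidden'' crossing pattern arises, so the resulting diagram is a legitimate permutation model and $G$ is a permutation graph; being bipartite by hypothesis, it is a bipartite permutation graph. Throughout, I would keep the argument at the level of citing~\cite{MR917130,MR1762199} for the heavy lifting in the $(2)\Rightarrow(3)$ step, since the theorem is explicitly stated as following from that reference and its proof, and the remaining implications are direct translations between the permutation-diagram picture and the combinatorial strong-ordering condition.
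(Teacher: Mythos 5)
The paper offers no proof of this statement: it is quoted as a known result, introduced with the remark that it ``follows from Theorem 1 in~\cite{MR917130} and its proof,'' so there is no in-paper argument to compare against, and your ultimate deferral to that reference is consistent with what the authors do. Your sketches of $(1)\Rightarrow(2)$ and $(3)\Rightarrow(1)$ are the standard translations between permutation diagrams and orderings and are fine.

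The one concrete mechanism you propose for $(2)\Rightarrow(3)$, however, does not work. An induced-crossing-free biconvex ordering of a bipartite permutation graph need \emph{not} be strong, so there is no way to ``promote'' the disjunctive guarantee of \Cref{thm:induced-crossing-free} to the conjunctive one. Take $G=P_4$ with $X=\{x_1,x_2\}$, $Y=\{y_1,y_2\}$, edges $\{x_1,y_1\},\{x_2,y_1\},\{x_2,y_2\}$, ordered $x_1<_X x_2$ and $y_2<_Y y_1$. This ordering is biconvex, and the unique crossing pair $\{x_1,y_1\},\{x_2,y_2\}$ has the diagonal $x_2y_1$ present, so the ordering is induced-crossing-free; but it is not strong, since $x_1y_2\notin E$. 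The point is that ``at least one diagonal'' is a property of the particular ordering, and membership of $G$ in the class gives no leverage to upgrade a badly chosen ordering. The workable route --- essentially your alternative suggestion, and what the proof in~\cite{MR917130} actually does --- goes the other way: a strong ordering of a \emph{connected} bipartite graph is automatically biconvex. Indeed, if $x_1<_X x_2<_X x_3$ with $x_1,x_3\in N(y)$ and $x_2\notin N(y)$, pick a neighbor $y'$ of $x_2$; whichever side of $y$ it lies on, one of the pairs $\{x_1,y\},\{x_2,y'\}$ or $\{x_2,y'\},\{x_3,y\}$ crosses, and strongness forces $x_2\in N(y)$, a contradiction (the symmetric argument handles consecutiveness on the other side, and disconnected graphs are handled by concatenating components). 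So \Cref{thm:induced-crossing-free} is not needed here at all, and routing $(2)\Rightarrow(3)$ through it leads to a dead end.
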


\begin{sloppypar}
\Cref{thm:strongly-induced-crossing-free} implies the following property of bipartite permutation graphs equipped with a strong ordering.
\end{sloppypar}

\begin{sloppypar}
	\begin{corollary}\label{cor:induced-matching-no-crossing}
		Let $G = (X,Y;E)$ be a bipartite permutation graph, let $(<_X,<_Y)$ be a strong ordering of $G$, and let $M$ be an induced matching in $G$.
		Then, no two edges in $M$ cross.
	\end{corollary}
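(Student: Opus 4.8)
The plan is to argue by contradiction: suppose two edges $e = \{x_1,y_2\}$ and $f = \{x_2,y_1\}$ of the induced matching $M$ cross, so that $x_1 <_X x_2$ and $y_1 <_Y y_2$. Because $(<_X,<_Y)$ is a strong ordering, \Cref{thm:strongly-induced-crossing-free} (more precisely, the very definition of a strong ordering applied to these two crossing edges) guarantees that $x_1$ is adjacent to $y_1$ and $x_2$ is adjacent to $y_2$ in $G$.

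Now I would observe that this directly violates the defining property of an induced matching. The four vertices $x_1,x_2,y_1,y_2$ are exactly the endpoints of the two edges $e,f \in M$, and the set of endpoints of $M$ is required to induce no edges other than those in $M$ itself. But $\{x_1,y_1\}$ and $\{x_2,y_2\}$ are edges of $G$ among these endpoints, and neither of them equals $e = \{x_1,y_2\}$ or $f = \{x_2,y_1\}$ (indeed $e$ and $f$ are the ``diagonal'' pairs, while the strong ordering forces the ``straight'' pairs $\{x_1,y_1\}$ and $\{x_2,y_2\}$ to be present). This produces an induced edge outside $M$, contradicting the assumption that $M$ is induced.

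A small bookkeeping point worth checking explicitly is that $x_1,x_2,y_1,y_2$ are genuinely four distinct vertices, so that $\{x_1,y_1\}$ really is a ``new'' edge and not a relabeling of one already in $M$. Since $e$ and $f$ are distinct edges of a matching, they share no endpoint, which forces $x_1 \neq x_2$ and $y_1 \neq y_2$; and as $x_1,x_2 \in X$ while $y_1,y_2 \in Y$ with $X \cap Y = \emptyset$, no $x$ can coincide with any $y$. Thus all four are distinct and the argument goes through cleanly.

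I do not anticipate a serious obstacle here: the statement is essentially an immediate unwinding of the definitions of ``strong ordering,'' ``crossing edges,'' and ``induced matching,'' with \Cref{thm:strongly-induced-crossing-free} invoked only to know that a strong ordering has the stated adjacency-forcing property. If anything, the only care needed is to present the contradiction in the correct direction — a strong ordering \emph{forces} the straight edges to exist, whereas an induced matching \emph{forbids} any chord among its endpoints, and these two requirements are incompatible precisely when a crossing occurs.
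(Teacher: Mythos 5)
Your proof is correct and matches the paper's intent exactly: the paper states this corollary without proof as an immediate consequence of the definition of a strong ordering (via \Cref{thm:strongly-induced-crossing-free}), and your unwinding — a crossing forces the two ``straight'' edges to be present, which violates the induced matching condition on the four distinct endpoints — is precisely the intended argument.
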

\end{sloppypar}

We will also use the following well-known characterization of proper interval graphs (see, e.g., Gardi~\cite{MR2364171}).

\begin{theorem}\label{thm:proper-interval}
A graph $G$ is a proper interval graph if and only if there exists an ordering $\sigma= (v_1,\ldots, v_n)$ of the vertices of $G$
and an ordering $\tau = (C_1,\ldots, C_k)$ of the maximal cliques of $G$ such that
for each $i\in \{1,\ldots, n\}$ the maximal cliques containing vertex $v_i$
appear consecutively in the ordering $\tau$, and
for each $j\in \{1,\ldots, k\}$ clique $C_j$ consists of consecutive vertices with respect to
ordering $\sigma$.
\end{theorem}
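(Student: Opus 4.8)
The plan is to prove both implications by translating between a proper interval representation and the pair of orderings $(\sigma,\tau)$. It is worth noting first that the two stated conditions say exactly that $(\sigma,\tau)$ is a biconvex ordering of the vertex-clique incidence graph $B_G$: for each $x\in X=V$ the neighbors (maximal cliques containing it) are $\tau$-consecutive, and for each $y\in Y=\mathcal C_G$ the neighbors (its vertices) are $\sigma$-consecutive. So the theorem asserts that $G$ is a proper interval graph if and only if $B_G$ is biconvex.

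For the forward direction, I would start from a proper interval representation $(I_v:v\in V)$, normalized so that all $2n$ endpoints are distinct, writing $I_v=[\ell_v,r_v]$. Since no interval contains another, ordering the vertices by increasing left endpoint coincides with ordering them by increasing right endpoint; I take this common order as $\sigma$. Using the Helly property of intervals, each maximal clique is the set of intervals through some point, so I would pick one representative point $p_C$ per maximal clique $C$ (chosen distinct) and let $\tau$ order the cliques by increasing $p_C$. For the $\sigma$-condition, if $v_a<_\sigma v_b<_\sigma v_c$ with $v_a,v_c\in C$, then $\ell_a<\ell_b<\ell_c$ and $r_a<r_b<r_c$, and the chain $\ell_b<\ell_c\le p_C\le r_a<r_b$ forces $p_C\in I_{v_b}$, i.e.\ $v_b\in C$; hence each clique is a $\sigma$-block. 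For the $\tau$-condition, a clique $C$ contains $v_i$ iff $p_C\in[\ell_i,r_i]$, and since $\tau$ sorts cliques by $p_C$, the cliques containing $v_i$ form a $\tau$-block.

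For the backward direction I would use only the $\sigma$-condition. Because every edge of $G$ lies in a maximal clique and each maximal clique is a set of $\sigma$-consecutive vertices, $\sigma$ is an umbrella-free (indifference) ordering: whenever $v_i<_\sigma v_j<_\sigma v_k$ with $v_i\sim v_k$, some maximal clique contains $v_i$ and $v_k$, hence (being a $\sigma$-interval) also $v_j$, so $v_i\sim v_j$ and $v_j\sim v_k$. From this I would show that each closed neighborhood $N[v_i]$ is an index interval $[f(i),g(i)]$ and that $g$ is non-decreasing, both via short applications of the umbrella-free property. Setting $L_i=i$ and $R_i=g(i)+\varepsilon i$ for a fixed $\varepsilon\in(0,1/n)$ then yields intervals whose left and right endpoints are both strictly increasing in $i$; strict monotonicity of both endpoints makes the representation proper, while for $i<j$ one checks $v_i\sim v_j$ iff $j\le g(i)$ iff $L_j\le R_i$, so adjacency is represented correctly and $G$ is a proper interval graph.

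I expect the main obstacle to be properness in the backward direction. The naive representation $I_{v_i}=[i,g(i)]$ already captures adjacency but fails properness exactly at true twins, where equal closed neighborhoods give $g(i)=g(j)$ and nested intervals; the perturbation $R_i=g(i)+\varepsilon i$ is precisely what repairs this, and I would need to verify that $\varepsilon<1/n$ keeps the integer adjacency threshold $j\le g(i)$ intact. The supporting facts that $N[v_i]$ is an interval and that $g$ is non-decreasing, each derived from the umbrella-free condition, are the routine but essential lemmas behind this step.
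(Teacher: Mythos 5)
Your proof is correct. Note that the paper does not actually prove this statement: it is quoted as a known characterization with a pointer to Gardi's paper on the Roberts characterization of proper and unit interval graphs, so there is no in-paper argument to compare against. Your route is the standard self-contained one, and both halves check out. In the forward direction, the key verifications are sound: the representative points $p_C$ of distinct maximal cliques automatically lie in disjoint intersection regions (so ``chosen distinct'' is free), the chain $\ell_b<\ell_c\le p_C\le r_a<r_b$ correctly forces $v_b\in C$ via maximality of $C$, and sorting cliques by $p_C$ makes $\{C: p_C\in I_{v_i}\}$ a $\tau$-block. The one step you wave at is the normalization to $2n$ distinct endpoints while preserving properness and the intersection graph; this is a routine perturbation but would need a sentence in a fully written-out proof. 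In the backward direction, deriving the umbrella-free property from the $\sigma$-condition alone, showing $N[v_i]=[f(i),g(i)]$ with $g$ non-decreasing, and the perturbed representation $[i,\,g(i)+\varepsilon i]$ with $\varepsilon<1/n$ all work as claimed: both endpoint sequences are strictly increasing (properness), and since $0<\varepsilon i<1$ the threshold $L_j\le R_i$ is equivalent to the integer condition $j\le g(i)$, which is exactly adjacency. Your observation that the backward direction uses only the $\sigma$-condition is a genuine (and known) strengthening. The only point of contact with the paper's own text is that your forward construction parallels the argument the paper gives later, in the proof that proper interval graphs have vertex-clique incidence graphs with a strong ordering, where the cliques are ordered by their first vertices rather than by representative points; your version proves the weaker biconvexity statement directly from the geometry.
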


The following theorem gives several characterizations of proper interval graphs in terms of their vertex-clique incidence graphs.

\begin{theorem}\label{thm:pig-characterizations-via-BG}
Let $G$ be a graph. Then, the following statements are equivalent:
\begin{enumerate}
\item\label{statement-1} $G$ is a proper interval graph.
\item\label{statement-2} $B_G$ is a biconvex graph.
\item\label{statement-4} $B_G$ is a bipartite permutation graph.
\item\label{statement-5} $B_G$ has a strong ordering.
\item\label{statement-6} $B_G$ has a strong biconvex ordering.
\item\label{statement-3} $B_G$ has an induced-crossing-free biconvex ordering.
\end{enumerate}
\end{theorem}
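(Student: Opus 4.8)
The plan is to prove the six-way equivalence by first extracting the purely combinatorial content of each statement about $B_G$ and then carrying out the one genuinely geometric step. The backbone is the observation that, for the bipartite graph $B_G = (X,Y;E)$ with $X = V(G)$ and $Y = \mathcal{C}_G$, the neighbourhood in $Y$ of a vertex $x\in X$ is exactly the family of maximal cliques containing $x$, while the neighbourhood in $X$ of a clique $C\in Y$ is exactly the vertex set of $C$. Consequently, a pair $(\sigma,\tau)$ of linear orderings of $V(G)$ and of $\mathcal{C}_G$ is a biconvex ordering of $B_G$ if and only if it satisfies the two consecutiveness conditions in \Cref{thm:proper-interval}. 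This gives the equivalence of statements (1) and (2) directly: $B_G$ is biconvex precisely when such orderings exist, which by \Cref{thm:proper-interval} happens precisely when $G$ is a proper interval graph.

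Next I would dispatch the statements that speak only about the abstract bipartite graph $B_G$. Since $B_G$ is bipartite, \Cref{thm:strongly-induced-crossing-free} applies verbatim and yields the equivalence of statements (3), (4), and (5). Statement (6) is then trapped between (5) and (2) by two trivialities: a strong biconvex ordering is in particular an induced-crossing-free biconvex ordering (the defining ``and'' of strongness implies the ``or'' of the induced-crossing-free condition), so (5)$\Rightarrow$(6); and an induced-crossing-free biconvex ordering is by definition biconvex, so (6)$\Rightarrow$(2). At this point every statement is linked except that I still need one bridge carrying the proper-interval side into the block $\{(3),(4),(5)\}$.

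That bridge, the implication (1)$\Rightarrow$(5), is the crux and the only step that uses more than bookkeeping. Starting from a proper interval representation of $G$, I would order the vertices $v_1,\ldots,v_n$ by their left endpoints and the maximal cliques $C_1,\ldots,C_k$ from left to right along the line. Writing $[a_i,b_i]$ for the set of indices of cliques containing $v_i$ (an interval, by consecutiveness), the key structural fact is \emph{monotonicity}: $i<j$ forces $a_i\le a_j$ and $b_i\le b_j$. This is exactly where the proper-interval (no-containment) hypothesis is essential, since it makes the orderings by left and by right endpoints agree, so both ends of the clique-range move monotonically with the vertex order. Granting monotonicity, checking that $(\sigma,\tau)$ is a \emph{strong} biconvex ordering is short: for crossing edges $\{v_i,C_q\}$ and $\{v_j,C_p\}$ with $i<j$ and $p<q$, the inclusions $a_i\le q\le b_i$ and $a_j\le p\le b_j$ combine with monotonicity to give both $a_i\le p\le b_i$ and $a_j\le q\le b_j$, i.e.\ $v_i\in C_p$ and $v_j\in C_q$, which is precisely the strongness requirement.

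Finally I would close the cycle: (1)$\Rightarrow$(5) feeds into $\{(3),(4),(5)\}$, while (5)$\Rightarrow$(2)$\Rightarrow$(1) (the first trivial, the second from \Cref{thm:proper-interval}) returns to the proper-interval side, and (5)$\Rightarrow$(6)$\Rightarrow$(2) incorporates statement (6); hence all six are equivalent. I expect stating the monotonicity of the clique-ranges cleanly to be the main obstacle. It is worth stressing that ``biconvex $\Rightarrow$ bipartite permutation'' is false for biconvex graphs in general, so this step genuinely exploits that the $Y$-side of $B_G$ consists of \emph{maximal} cliques (as encoded in \Cref{thm:proper-interval}), not any formal property of biconvex orderings. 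If one prefers to route through \Cref{thm:induced-crossing-free}, the very same monotonicity argument upgrades an arbitrary induced-crossing-free biconvex ordering (supplied by \Cref{thm:induced-crossing-free}) to a strong one, giving the alternative bridge (2)$\Rightarrow$(6)$\Rightarrow$(5).
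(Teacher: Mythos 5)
Your proposal is correct and follows essentially the same route as the paper: equivalence of (1) and (2) via \Cref{thm:proper-interval}, the block of bipartite-permutation/strong-ordering statements via \Cref{thm:strongly-induced-crossing-free}, trivial downward implications to close the cycle, and the crux implication from a proper interval representation to a strong ordering established by the same monotonicity phenomenon (the paper tracks first/last vertices of cliques, you track first/last cliques of vertices, which is the same argument dualized). The only cosmetic divergence is that you obtain the induced-crossing-free statement as a weakening of the strong biconvex ordering rather than invoking \Cref{thm:induced-crossing-free}, which is equally valid.
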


\begin{proof}
Theorem~\ref{thm:proper-interval} implies the equivalence between statements~\ref{statement-1} and~\ref{statement-2}.
Equivalence between statements~\ref{statement-2} and~\ref{statement-3} follows from Theorem~\ref{thm:induced-crossing-free}.
Equivalence among statements~\ref{statement-4}, \ref{statement-5}, and~\ref{statement-6} follows from Theorem~\ref{thm:strongly-induced-crossing-free}.
Clearly, statement~\ref{statement-6} implies statement~\ref{statement-2}.

Finally, we show that statement~\ref{statement-1} implies statement~\ref{statement-5}.
Fix a proper interval representation $(I_v:v\in V(G))$ of $G$. Let $B_G = (X,Y;E)$ where $X = V(G)$ and $Y = \mathcal{C}_G$.
Let $<_X$ be the ordering of $X$ corresponding to the left-endpoint order of the intervals. (Note that since no interval properly contains another, the left-endpoint order and the right-endpoint order are the same.)
As shown in~\cite{MR2364171}, every maximal clique $C\in \mathcal{C}_G (= Y)$ consists of consecutive vertices with respect to $<_X$.
Since the cliques are maximal, no two cliques in $Y$ have the same first vertex with respect to $<_X$, hence there is a unique and well defined ordering $<_Y$ of $Y$ that orders the cliques in increasing order of their first vertices in the vertex order.
We claim that the pair $(<_X, <_Y)$ is a strong ordering of $B_G$. Consider any two crossing edges $e = \{x_1,y_2\}$ and $f = \{x_2,y_1\}$.
We may assume that $x_1 <_X x_2$ and $y_1<_Y y_2$.
Since $y_1<_Y y_2$, we have $s_1<_X s_2$, where $s_i$ is the first vertex of $y_i$ for $i\in \{1,2\}$.
Furthermore, since $x_1$ and $y_2$ are adjacent in $B_G$, vertex $x_1$ belongs to $y_2$, and thus
$s_2\le_X x_1$. Consequently, $s_1<_X  s_2 \le_X x_1 <_X x_2$. Thus, since $x_2$ belongs to $y_1$, also $x_1$ belongs to $y_1$. This implies that
$x_1$ and $y_1$ are adjacent in $B_G$.
Finally, since $y_1<_Y y_2$, clique $y_2$ ends strictly after clique $y_1$, and since $x_2$ belongs to $y_1$, we conclude that $x_2$ also belongs to
$y_2$. Thus, $x_2$ and $y_2$ are adjacent in $B_G$.
It follows that the pair $(<_X, <_Y)$ is a strong ordering of $B_G$, as claimed.
\end{proof}

\subsection{Maximum induced matchings in bipartite permutation graphs, revisited}

Our goal is to show that if $G$ is a proper interval graph, then the sufficient condition given by \Cref{cor:optimality} is satisfied, namely, there exists a maximum induced matching $M$ in $B_G$ such that the set $S$ of $M\!$-saturated vertices in $X$ dominates $Y$ in $B_G$.
By \Cref{cor:optimality}, this will imply $\tau_c^+(G) = |S| = |M|$.
We show the claimed property of $B_G$ as follows.
First, by applying \Cref{thm:pig-characterizations-via-BG}, we infer that the graph $B_G$ is a bipartite permutation graph.
Second, by construction, $B_G$ does not have any isolated vertices and no two distinct vertices in $Y$ have comparable neighborhoods in $X$.
It turns out that these properties are already enough to guarantee the desired conclusion.
We show this by a careful analysis of the linear-time algorithm due to Chang from~\cite{MR2024264} for computing a maximum induced matching in bipartite permutation graphs.
The linear time complexity also relies on the following result.

\begin{theorem}[Sprague~\cite{MR1369371} and Spinrad, Brandst\"{a}dt, and Stewart~\cite{MR917130}]\label{thm:bip-perm-strong-ordering}
A strong biconvex ordering of a given bipartite permutation graph can be computed in linear time.
\end{theorem}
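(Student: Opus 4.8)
The plan is to reduce the problem to computing a geometric \emph{permutation model} of the input graph and then to read the two linear orders off that model. By \Cref{thm:strongly-induced-crossing-free}, a bipartite permutation graph is guaranteed to admit a strong biconvex ordering, so the entire content of the statement is algorithmic: I must exhibit such an ordering and argue that the computation runs in time linear in $|V(G)|+|E(G)|$. A first reduction is to connected components: if each component is equipped with a strong biconvex ordering, then concatenating the $X$-orders and the $Y$-orders block by block, one component after another, yields a strong biconvex ordering of $G$, since no edge joins distinct components and hence no new crossing pairs or non-consecutive neighborhoods are created. Thus I may assume $G = (X,Y;E)$ is connected.

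First, I would compute a permutation model of $G$, that is, a representation of $G$ as the intersection graph of segments drawn between two parallel lines in which two vertices are adjacent exactly when their segments cross; equivalently, a pair of linear orders whose inversions are precisely the edges. Reading the top-endpoint order restricted to $X$ gives $<_X$ and restricted to $Y$ gives $<_Y$. Second, I would verify that $(<_X,<_Y)$ is a \emph{strong} ordering: given two crossing edges $e=\{x_1,y_2\}$ and $f=\{x_2,y_1\}$ with $x_1<_X x_2$ and $y_1<_Y y_2$, the geometry of the model forces the segments of $x_1$ and $y_1$, and of $x_2$ and $y_2$, to cross as well, so that $x_1$ is adjacent to $y_1$ and $x_2$ to $y_2$, which is exactly the condition defining a strong ordering. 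Third, I would argue biconvexity: either the model-derived order is already biconvex, which one checks by showing that in a bipartite permutation model the segments crossing a fixed segment occupy a consecutive interval of the opposite part, so that the neighborhood of every vertex is an interval in the corresponding order; or, failing that, I would refine the order to a biconvex one while preserving the strong condition. Since both the strong and the biconvex conditions are read off a single geometric object, producing the model is the crux.

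The main obstacle is exactly the linear-time construction of the model, equivalently, linear-time recognition together with a certificate. For general permutation graphs this rests on computing a transitive orientation via modular decomposition, which is linear but heavy; for the bipartite case one prefers a lighter combinatorial scan, for instance a suitably tie-broken lexicographic breadth-first search that outputs candidate orders $<_X$ and $<_Y$, accompanied by a proof that the scan cannot succeed unless $G$ genuinely is a bipartite permutation graph. The delicate points are (i) proving that the order produced by the scan already satisfies the strong condition, and not merely the weaker biconvex one, and (ii) resolving the ordering ties consistently so that both properties hold simultaneously, all within a single linear-time pass and without an expensive verification step at the end. This is precisely the part carried out in detail in the cited works of Sprague and of Spinrad, Brandst\"{a}dt, and Stewart.
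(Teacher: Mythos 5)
This statement is an attributed result from the literature; the paper supplies no proof of it and simply cites Sprague and Spinrad--Brandst\"adt--Stewart, so there is no in-paper argument to compare yours against. Your outline is sound where it makes concrete claims: the reduction to connected components is correct (a crossing pair of edges必 lies within a single component, so concatenating the component orderings block by block preserves both the strong and the biconvex conditions); the verification that the two orders read off a permutation model satisfy the strong condition is a correct, if slightly tedious, case analysis on top/bottom endpoint positions; and your biconvexity step can even be simplified, since for a connected bipartite graph \emph{every} strong ordering is automatically biconvex (if $x\sim y_1,y_3$, $x\not\sim y_2$ with $y_1<_Y y_2<_Y y_3$, then any neighbor $x'$ of $y_2$ creates a crossing pair violating the strong condition whether $x'<_X x$ or $x<_X x'$), so the ``refine if necessary'' fallback is not needed. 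The one genuine omission is the crux you yourself flag: the linear-time construction of the model (or of the orders directly, e.g.\ via a tie-broken BFS/LexBFS sweep) is not carried out but deferred to the cited works. Since the theorem is itself presented in the paper as a citation, this deference is acceptable, but be aware that as a self-contained proof your write-up establishes only that a strong biconvex ordering \emph{exists} and can be extracted from a permutation model (which is essentially \Cref{thm:strongly-induced-crossing-free}), not that one can be \emph{computed} in linear time.
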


\begin{theorem}\label{thm:MIM-bip-perm}
Given a bipartite permutation graph $G = (X,Y;E)$, there is a linear-time algorithm that computes a maximum induced matching $M$ in $G$ such that, if $G$ has no isolated vertices and no two vertices in $Y$ have comparable neighborhoods in $G$, then the set of $M\!$-saturated vertices in $X$ dominates $Y$.
\end{theorem}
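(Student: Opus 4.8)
The plan is to analyze Chang's linear-time algorithm for maximum induced matching in bipartite permutation graphs and establish the additional domination property under the two stated hypotheses. First I would fix a strong biconvex ordering $(<_X,<_Y)$ of $G$, which exists and is computable in linear time by \Cref{thm:bip-perm-strong-ordering}. By \Cref{cor:induced-matching-no-crossing}, any induced matching relative to this ordering is crossing-free, so the matched edges appear in a nested or sequential pattern along the two orderings. The key structural fact I would exploit is that a crossing-free induced matching $M$ can be described by listing its edges $\{x_{i_1},y_{j_1}\},\dots,\{x_{i_m},y_{j_m}\}$ so that both the $X$-endpoints and the $Y$-endpoints are simultaneously increasing; moreover, the ``induced'' condition forces gaps between consecutive matched $X$-endpoints to be non-adjacent to the relevant cliques. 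I would recall (or reprove) that Chang's algorithm essentially performs a greedy left-to-right sweep producing exactly such a canonical crossing-free maximum induced matching.

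The heart of the argument is to show that for the particular matching $M$ output by Chang's algorithm, every vertex $y\in Y$ has a neighbor among the $M$-saturated vertices of $X$. I would argue by contradiction: suppose some $y\in Y$ is dominated by no $M$-saturated $X$-vertex. Because $G$ has no isolated vertices, $y$ has at least one neighbor in $X$, none of which is matched by $M$. I would then locate the matched edges of $M$ immediately surrounding $y$ in the $<_Y$ order and use the no-comparable-neighborhoods hypothesis together with the crossing-free (strong-ordering) property to derive that $y$, or one of its neighbors, could be used to enlarge $M$ or to replace an edge, contradicting either maximality of $|M|$ or the specific greedy choices of the algorithm. The incomparability of $Y$-neighborhoods is exactly what rules out a degenerate clique that sits ``inside'' another and is thereby skipped by the sweep; it guarantees that each $y$ contributes a genuinely new interval of $X$-vertices.

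I anticipate that the main obstacle is pinning down the precise invariants maintained by Chang's algorithm finely enough to guarantee domination, rather than merely maximum cardinality; the published algorithm is optimized for the size of $M$ and says nothing a priori about unsaturated $Y$-vertices. So the real work is to prove a strengthened correctness statement: at each step the greedy choice not only extends a maximum induced matching but also ``covers'' every clique lying between the previously handled region and the current one. Concretely, I would maintain the invariant that after processing the first $t$ edges, every $y\in Y$ whose first $X$-neighbor precedes the current sweep position already has an $M$-saturated neighbor. Establishing this invariant inductively, using the strong ordering to control adjacencies and the incomparability of $Y$-neighborhoods to prevent cliques from being bypassed, is where the new insight beyond Chang's original analysis is needed. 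Once the invariant is proved for the full run, taking $S$ to be the set of $M$-saturated $X$-vertices yields the desired domination, completing the theorem and, via \Cref{cor:optimality}, the intended application to proper interval graphs.
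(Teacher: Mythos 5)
Your overall strategy coincides with the paper's: fix a strong (biconvex) ordering, run Chang's algorithm, and derive a contradiction from a hypothetical undominated $y\in Y$ using the strong-ordering and incomparability hypotheses. However, the proposal leaves a genuine gap exactly at the decisive step, and one of the two escape routes you offer is a dead end. You say the undominated $y$ would let you ``enlarge $M$ or \ldots replace an edge, contradicting either maximality of $|M|$ or the specific greedy choices of the algorithm.'' The maximality route cannot work: the conclusion is a property of the \emph{particular} matching produced by the algorithm, not of every maximum induced matching. Concretely, take the path $x_1y_1x_2y_2x_3y_3x_4$ (a bipartite permutation graph with no isolated vertices and pairwise incomparable $Y$-neighborhoods). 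The set $\{x_1y_1,\,x_4y_3\}$ is a maximum induced matching, yet its $X$-saturated set $\{x_1,x_4\}$ fails to dominate $y_2$. So no argument that only invokes $|M|$ being maximum can succeed; you are forced into the second route, which means tracing the algorithm's actual behaviour.

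That second route is precisely where the content of the theorem lies, and your proposal defers it (``the real work is to prove a strengthened correctness statement\ldots is where the new insight is needed''). The paper's proof carries this out as a single contradiction rather than a sweep invariant: order $M$ as $\{x_{i_1},y_{j_1}\},\dots,\{x_{i_r},y_{j_r}\}$ with both coordinates increasing (the algorithm adds them from the top down, starting at $\{x_s,y_t\}$, not left to right), let $k$ be smallest with $y<_Y y_{j_k}$, use incomparability of $N(y)$ and $N(y_{j_k})$ to produce $x\in N(y)\setminus N(y_{j_k})$, show $x<_X x_{i_k}$ via the strong ordering, and then argue that in the iteration following the insertion of $\{x_{i_k},y_{j_k}\}$ the loop does not terminate and, among the three branches, only the one producing $\{x_{p-1},y_{\max(x_{p-1})}\}$ with $y_{j_{k-1}}<_Y y$ is possible; the strong ordering applied to the crossing pair $\{x,y\}$, $\{x_{p-1},y_{j_{k-1}}\}$ then forces $x_{p-1}\in N(y)$, contradicting that no neighbour of $y$ is $M$-saturated. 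Your proposed invariant (``every $y$ whose first $X$-neighbour precedes the current position is already dominated'') is plausible as an alternative packaging, but as stated it is not proved, its correctness depends on exactly the same three-branch case analysis, and its orientation would need to be reversed to match the algorithm's top-down sweep. You also omit the routine but necessary reductions (connected components; the vacuous case where the hypotheses fail), which the paper handles explicitly.
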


\begin{proof}
Let $G = (X,Y;E)$ be a bipartite permutation graph.
We consider two cases.
First, assume first that $G$ either contains an isolated vertex or two vertices in $Y$ with comparable neighborhoods in $G$.
In this case, it suffices to show that  there is a linear-time algorithm that computes a maximum induced matching in $G$.
We may assume without loss of generality that $G$ is connected; otherwise, we compute in linear time the connected components of $G$ using breadth-first search, solve the problem on each component, and combine the solutions.
Assuming $G$ is connected, we compute a maximum induced matching $M$ in $G$ in linear time using Chang's algorithm~\cite{MR2024264}.

Assume now that $G$ has no isolated vertices and no two vertices $y,y'\in Y$ have comparable neighborhoods in $G$, that is, $N(y)\subseteq N(y')$ or $N(y')\subseteq N(y)$, if and only if $y = y'$.
Again, we first argue that it suffices to consider the case of connected graphs.
In the general case, we proceed as follows.
First, the connected components of $G$ can be computed in linear time using breadth-first search.
Second, since no two vertices in $Y$ have comparable neighborhoods in $G$, the same is also true for each connected component.
Third, assume that each connected component $C = (X_C,Y_C;E_C)$ has a maximum induced matching $M_C$ such that, if no two vertices in $Y_C$ have comparable neighborhoods in $G$, then the set of $M_C$-saturated vertices in $X_C$ dominates $Y_C$.
Thus, the union of all such maximum induced matchings $M_C$ yields a maximum induced matching $M$ in $G$ such that the set of $M\!$-saturated vertices in $X$ dominates~$Y$.

Assume now that $G$ is connected.
As shown by Chang~\cite{MR2024264}, a maximum induced matching $M$ of $G$ can be computed in linear time.
We show that the set of $M\!$-saturated vertices in $X$ dominates $Y$.
To do that, we first explain Chang's algorithm.
The algorithm is based on a strong biconvex ordering $(<_X,<_Y)$ of $G$, which can be computed in linear time (see \Cref{thm:bip-perm-strong-ordering}).
Let $x_1,\ldots,x_s$ be the ordering of $X$ such that for all $i,j\in \{1,\ldots, s\}$, we have $i<j$ if and only if $x_i<_X x_j$.
Similarly, let $y_1,\ldots,y_t$ be the ordering of $Y$ such that for all $i,j\in \{1,\ldots, t\}$, we have $i<j$ if and only if $y_i<_Y y_j$.
For each vertex $v\in X$, let $\min(v)$ and $\max(v)$ denote the smallest and the largest $i$ such that $y_i$ is adjacent to $v$, respectively; for vertices in $Y$, $\min(v)$ and $\max(v)$ are defined similarly.
The pseudocode is given as Algorithm~\ref{alg:BPG}.

\begin{algorithm}[h!]
	\caption{Computing a maximum induced matching of a connected bipartite permutation graph}\label{alg:BPG}
	
	\KwIn{A connected bipartite permutation graph $G = (X,Y;E)$ with $E\neq\emptyset$.}
	\KwOut{A maximum induced matching $M$ of $G$.}
	\BlankLine
	
	compute a strong biconvex ordering $(<_X,<_Y)$ of $B_G$\;
	
	compute the values $\min(v)$ and $\max(v)$ for all $v\in V(G)$\;
	
    $M \leftarrow \{\{x_s,y_t\}\}$\; \tcp{the vertices $x_s$ and $y_t$ are adjacent in $G$}

    let $i = s$ and $j = t$\;

	\While{$\min(x_i)\neq 1$ and $\min(y_j)\neq 1$} { \nllabel{line:iteration}
		
		let $p = \min(y_j)$ and $q = \min(x_i)$\; \tcp{note that $p\ge 2$ and $q\ge 2$}
		
		\If{$\min(x_p)<q$ and $\min(y_q)<p$}
		{
		    $M\leftarrow M\cup \{\{x_{p-1},y_{q-1}\}\}$\; 		$i \leftarrow p-1$\;
		    $j \leftarrow q-1$\;
		}
		\If{$\min(x_p)=q$ and $\min(y_q)<p$}
		{
		    $M\leftarrow M\cup \{\{x_{\max(y_{q-1})},y_{q-1}\}\}$\;
		    $i \leftarrow \max(y_{q-1})$\;
		    $j \leftarrow q-1$\;
		}
		\If{$\min(x_p)<q$ and $\min(y_q)=p$}
		{
		    $M\leftarrow M\cup \{\{x_{p-1},y_{\max(x_{p-1})}\}\}$\;
		    $i \leftarrow p-1$\;
		    $j \leftarrow \max(x_{p-1})$\;
		}
		\tcp{exactly one the of above three {\bf if} statements is true}
	}
	
	\Return{$M$}\; \nllabel{line:returnM}
\end{algorithm}

Let $M$ be the matching computed by the above algorithm and suppose for a contradiction that there exists a vertex $y\in Y$ that is not adjacent to any $M$-saturated vertex in $X$.
Clearly, $y$ is not an endpoint of a matching edge.
By construction, no two edges of $M$ cross.
Thus, we may order the edges of $M$ linearly as
$M = \{\{x_{i_1},y_{j_1}\},\ldots, \{x_{i_r},y_{j_r}\}\}$
so that $i_1<\dots<i_r = s$ and  $j_1<\dots<j_r = t$.
Note that the algorithm added the edges to $M$ in the order
$\{x_{i_r},y_{j_r}\}, \{x_{i_{r-1}},y_{j_{r-1}}\}, \ldots, \{x_{i_1}y_{j_1}\}$.
Since $i_r = s$ and $j_r = t$, there exists a smallest integer $k\in \{1,\ldots, r\}$ such that $y<_Y y_{j_k}$.
Furthermore, since no two vertices in $Y$ have comparable neighborhoods, there exists a vertex $x\in X$ adjacent to $y$ but not to $y_{j_k}$.
The edge $\{x_{i_k},y_{j_k}\}$ belongs to the matching $M$, and hence the vertex $x_{i_k}$ is adjacent to $y_{j_k}$ but not to $y$, since no neighbor of $y$ is $M$-saturated.
Next, observe that $x<_X x_{i_k}$, since otherwise the presence of the edges $\{x_{i_k},y_{j_k}\}$ and $\{x,y\}$ would imply, using the fact that $(<_X,<_Y)$ is a strong ordering of $G$, that $x_{i_k}$ is adjacent to~$y$.

Consider the iteration of the {\bf while} loop of the algorithm right after the edge $\{x_{i_k},y_{j_k}\}$ was added to $M$.
Then $i = i_k$ and $j = j_k$ at the beginning of that loop.
Since $x<_Xx_{i}$ and $y<_Y y_i$, the facts that $x_i$ and $y_j$ are non-adjacent to $y$ and $x$, respectively, and that $(<_X,<_Y)$ is a strong ordering of $G$, imply that the condition $\min(x_i)\neq 1$ and $\min(y_j)\neq 1$ of the {\bf while} loop is satisfied.
Hence, the algorithm enters the {\bf while} loop.
Let $p = \min(y_j)$ and $q = \min(x_i)$.
Using the fact that $(<_X,<_Y)$ is a strong ordering of $G$, we infer that $x<_Xx_p$ and $y<_Yy_q$.
Since the graph $G$ is connected, exactly one of the conditions of the three {\bf if} statements within the {\bf while} loop will be satisfied and the algorithm adds at least one more edge $e = \{x_{i_{k-1}},y_{j_{k-1}}\}$ to $M$.
In particular, $(i_{k-1},j_{k-1})\in \{(p-1,q-1),(\max(y_{q-1}),q-1),(p-1,\max(x_{p-1}))\}$.
By the definition of $k$, we have $y_{j_{k-1}}<_Yy$.
Since we also have $y<_Yy_q$, we infer that $j_{k-1}<q-1$ and therefore $j_{k-1}= \max(x_{p-1})$ and consequently $i_{k-1} = p-1$.
The vertex $x_{p-1} = x_{i_{k-1}}$ is an endpoint of an edge in $M$ and therefore not adjacent to $y$, since no neighbor of $y$ is $M$-saturated.
In particular, $x_{p-1}\neq x$ and thus $x<_Xx_p$ implies that $x<_Xx_{p-1}$.
But now, the presence of the edges $\{x,y\}$ and $\{x_{p-1},y_{j_{k-1}}\}$ together with $x<_Xx_{p-1}$, $y_{j_{k-1}}<_Yy$, and the fact that $(<_X,<_Y)$ is a strong ordering of $G$, implies that $x_{p-1}$ is adjacent to $y$, a contradiction.

\end{proof}

\subsection{Solving UCT in proper interval graphs in linear time}

The following result is a consequence of \Cref{thm:chordal-maximal-cliques} and the fact that every proper interval graph is a chordal graph.

\begin{corollary}\label{cor:BG}
The vertex-clique incidence graph of a proper interval graph $G$ can be computed in linear time.
\end{corollary}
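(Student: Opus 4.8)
The plan is to bound the total size of the vertex-clique incidence graph $B_G$ and then combine two known linear-time algorithms. First I would invoke \Cref{thm:chordal-maximal-cliques}: since every proper interval graph is chordal, a proper interval graph $G = (V,E)$ has at most $|V|$ maximal cliques, and the list of these maximal cliques can be computed in time $\mathcal{O}(|V|+|E|)$. This immediately bounds the vertex set of $B_G$, since $|X| = |V(G)|$ and $|Y| = |\mathcal{C}_G| \le |V(G)|$, so $B_G$ has at most $2|V(G)|$ vertices.

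The key remaining point is to show that the edge set of $B_G$ can also be produced within the same time bound, since \Cref{thm:chordal-maximal-cliques} only asserts that the maximal cliques can be \emph{computed} quickly, not that the total size of their incidence structure is linear. Here I would argue that the sum of the sizes of the maximal cliques of a chordal graph is linear in $|V|+|E|$. Indeed, the Berry--Pogorelcnik algorithm referenced in \Cref{thm:chordal-maximal-cliques} runs in time $\mathcal{O}(|V|+|E|)$, and in particular it cannot output cliques whose total description length exceeds its running time; hence $\sum_{C\in\mathcal{C}_G}|C| = \mathcal{O}(|V|+|E|)$. Each incidence pair $\{x,C\}$ with $x\in C$ corresponds to one element counted in this sum, so the number of edges of $B_G$ is $\mathcal{O}(|V|+|E|)$. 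Consequently $B_G$ has $\mathcal{O}(|V|+|E|)$ vertices and edges in total, and once the maximal cliques are listed, assembling the bipartite graph $B_G$ explicitly (creating a vertex for each maximal clique and an edge for each membership) takes time linear in this total size.

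The main obstacle I anticipate is precisely this edge-counting step: one must be careful that the linear running time of clique enumeration really does certify a linear bound on the aggregate clique size, rather than merely on the number of cliques. If one wanted to avoid leaning on the algorithm's running time as a proxy for output size, an alternative is to note that for proper interval graphs one can directly read off a clique-consecutive ordering (as in \Cref{thm:proper-interval}) from a proper interval representation, which is computable in linear time, and in that representation each vertex lies in a contiguous block of cliques, making the incidences easy to enumerate block by block. Either route yields the claim. Putting the pieces together, the maximal cliques are computed in $\mathcal{O}(|V|+|E|)$ time, the incidences are generated in time proportional to their number, which is also $\mathcal{O}(|V|+|E|)$, and therefore the entire vertex-clique incidence graph $B_G$ is constructed in linear time, establishing the corollary.
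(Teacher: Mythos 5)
Your proposal is correct and follows essentially the same route as the paper, which simply derives the corollary from \Cref{thm:chordal-maximal-cliques} together with the fact that proper interval graphs are chordal. The extra care you take in bounding the total incidence size $\sum_{C\in\mathcal{C}_G}|C|$ (via the running time of the enumeration algorithm) is a detail the paper leaves implicit, and your argument for it is sound.
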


We now have everything ready to prove the announced result.

\begin{sloppypar}
\begin{theorem}
{\sc Upper Clique Transversal} can be solved in linear time in the class of proper interval graphs.
\end{theorem}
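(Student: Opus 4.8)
The plan is to assemble the machinery developed in this section into a single pipeline. Given a proper interval graph $G$ together with an integer $k$, the strategy is to build the vertex-clique incidence graph $B_G$, recognize it as a bipartite permutation graph satisfying the two structural hypotheses of \Cref{thm:MIM-bip-perm}, extract from the resulting maximum induced matching a minimal clique transversal of maximum size via \Cref{cor:optimality}, and then compare its cardinality against $k$.

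First I would invoke \Cref{cor:BG} to construct $B_G = (X,Y;E)$ in time $O(|V(G)|+|E(G)|)$; in particular, this guarantees that $B_G$ has size linear in that of $G$. By the equivalence of statements~\ref{statement-1} and~\ref{statement-4} in \Cref{thm:pig-characterizations-via-BG}, the graph $B_G$ is a bipartite permutation graph.

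Next I would check the two hypotheses required by \Cref{thm:MIM-bip-perm}. Since $X = V(G)$ and every vertex of $G$ lies in at least one maximal clique, while $Y = \mathcal{C}_G$ and every maximal clique is nonempty, the graph $B_G$ has no isolated vertex. Moreover, the neighborhood in $B_G$ of a vertex $C \in Y$ is precisely the set of vertices of $G$ contained in $C$; because distinct maximal cliques are pairwise incomparable under inclusion, no two vertices of $Y$ have comparable neighborhoods in $B_G$. Hence \Cref{thm:MIM-bip-perm} applies, and in linear time it yields a maximum induced matching $M$ of $B_G$ whose set $S$ of $M\!$-saturated vertices in $X$ dominates $Y$.

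Finally, \Cref{cor:optimality} certifies that this $S$ is a minimal clique transversal of $G$ of maximum size, so that $\tau_c^+(G) = |S| = |M|$; the algorithm then outputs ``yes'' if and only if $|S| \ge k$. Each step runs in time linear in $|V(G)|+|E(G)|$, which establishes the claimed bound. I do not anticipate any genuine obstacle here, since the technical heart has already been handled inside \Cref{thm:MIM-bip-perm}: the only points needing care are verifying that $B_G$ meets the matching theorem's two hypotheses (both immediate from the maximality of cliques) and confirming that the size of $B_G$, and hence the overall running time, is linear, which is exactly what \Cref{cor:BG} provides.
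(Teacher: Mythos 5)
Your proposal is correct and follows essentially the same route as the paper's proof: construct $B_G$ via \Cref{cor:BG}, identify it as a bipartite permutation graph via \Cref{thm:pig-characterizations-via-BG}, verify the no-isolated-vertices and incomparable-neighborhoods hypotheses, apply \Cref{thm:MIM-bip-perm}, and conclude optimality through \Cref{cor:optimality}. No differences of substance.
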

\end{sloppypar}

\begin{proof}
The algorithm proceeds in three steps.
In the first step, we compute from the input graph $G=(V,E)$
its vertex-clique incidence graph $B_G$, with parts $X = V$ and $Y = {\mathcal C}_G$.
By Theorem~\ref{thm:pig-characterizations-via-BG}, the graph $B_G$ is a bipartite permutation graph.
In the second step of the algorithm, we compute a maximum induced matching $M$ of $B_G$, using \Cref{thm:MIM-bip-perm}.
Finally, the algorithm returns the set of $M$-saturated vertices in $X$.
The pseudocode is given as Algorithm~\ref{alg:PIG}.

\begin{algorithm}[h!]
	\caption{Computing a maximum minimal clique transversal of a proper interval graph}\label{alg:PIG}
	
	\KwIn{A proper interval graph $G = (V,E)$.}
	\KwOut{A maximum minimal clique transversal of $G$.}
	\BlankLine
	
	compute the vertex-clique incidence graph $B_G$, with parts $X = V$ and $Y = {\mathcal C}_G$\; \nllabel{line:BG}

	compute a maximum induced matching $M$ of $B_G$;
 \nllabel{line:mim}
	
	compute the set $M_X$ of $M$-saturated vertices in $X$\;
	\Return{$M_X$}\; \nllabel{line:return}
\end{algorithm}

\textit{Correctness.}
By construction, the set $M_X$ returned by the algorithm is a subset of $X$, and thus a set of vertices of $G$.
Since every vertex of $G$ belongs to a maximal clique, and every maximal clique contains a vertex, $B_G$ does not have any isolated vertices.
Furthermore, since the vertices of $Y$ are precisely the maximal cliques of $G$, no two vertices in $Y$ have comparable neighborhoods in $B_G$.
Therefore, by \Cref{thm:MIM-bip-perm}, the set $M_X$ dominates $Y$.
By Corollary~\ref{cor:optimality}, $M_X$ is a maximum minimal clique transversal in $G$.

\medskip
\textit{Time complexity.}
Computing the vertex-clique incidence graph $B_G$ can be done in linear time by Corollary~\ref{cor:BG}.
Since $B_G$ is a bipartite permutation graph, a maximum induced matching of $B_G$ can be computed in linear time, see Theorem~\ref{thm:MIM-bip-perm}.
The set of $M$-saturated vertices in $X$ can also be computed in linear time.
Thus, the overall time complexity of the algorithm is $\mathcal{O}(|V|+|E|)$.
\end{proof}

The above proof also shows the following.

\begin{theorem}\label{thm:PIG-uctn=imn}
For every proper interval graph $G$, the upper clique transversal number of $G$ is equal to the induced matching number of $B_G$.
\end{theorem}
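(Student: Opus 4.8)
The plan is to obtain the claimed equality by combining the general upper bound of \Cref{cor:bound} with the explicit optimal construction already produced in the proof of the preceding theorem. Write $\mu$ for the induced matching number of $B_G$. \Cref{cor:bound} immediately gives $\tau_c^+(G)\le \mu$, so the whole task reduces to exhibiting a minimal clique transversal of $G$ of size exactly $\mu$; establishing $\tau_c^+(G)\ge \mu$ then forces equality.

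For the reverse inequality I would simply reuse the maximum induced matching $M$ of $B_G$ supplied by \Cref{thm:MIM-bip-perm}. First I would note that since $G$ is a proper interval graph, \Cref{thm:pig-characterizations-via-BG} guarantees that $B_G$ is a bipartite permutation graph, and that by construction $B_G$ has no isolated vertices (every vertex lies in a maximal clique and every maximal clique has a vertex) and no two vertices of $Y$ have comparable neighborhoods (distinct maximal cliques are incomparable under inclusion). These are exactly the hypotheses of \Cref{thm:MIM-bip-perm}, which therefore yields a \emph{maximum} induced matching $M$ whose set $M_X$ of $M$-saturated vertices in $X$ dominates $Y$. By \Cref{cor:optimality}, the set $M_X$ is then a minimal clique transversal of $G$ of maximum size, i.e.\ $|M_X| = \tau_c^+(G)$. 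The one elementary observation to make explicit is that, since $B_G$ is bipartite with parts $X$ and $Y$, each edge of the matching $M$ has exactly one endpoint in $X$; hence $M_X$ contains precisely one vertex per edge of $M$, giving $|M_X| = |M| = \mu$. Chaining these equalities yields $\tau_c^+(G) = |M_X| = |M| = \mu$, as desired.

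There is essentially no new obstacle here: all the substantive work was carried out earlier, namely the careful analysis of Chang's algorithm establishing the domination property in \Cref{thm:MIM-bip-perm} (the genuinely hard step, already discharged) and the sufficient optimality criterion of \Cref{cor:optimality}. The only point requiring a moment's care is the counting identity $|M_X| = |M|$, which relies on $B_G$ being bipartite so that no edge of $M$ contributes two vertices to the same side; everything else is a direct citation. Consequently the proof is a short assembly of the facts already proved, and indeed it is exactly what the argument for the preceding linear-time algorithm establishes along the way.
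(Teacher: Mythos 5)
Your proposal is correct and matches the paper exactly: the paper derives \Cref{thm:PIG-uctn=imn} as an immediate byproduct of the correctness argument for the linear-time algorithm, namely by combining \Cref{cor:bound} with the fact that the set of $M$-saturated vertices in $X$ (for the maximum induced matching $M$ from \Cref{thm:MIM-bip-perm}) is, by \Cref{cor:optimality}, a maximum minimal clique transversal of size $|M|$. No further comment is needed.
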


We conclude the section by showing that the result of \Cref{thm:PIG-uctn=imn} does not generalize to the class of interval graphs.

\begin{observation}\label{obs:interval-graphs-gap}
There exist interval graphs such that the difference between the induced matching number of their vertex-clique incidence graph and the upper clique transversal number of the graph is arbitrarily large.
\end{observation}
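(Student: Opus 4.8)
The plan is to exhibit an explicit family of interval graphs for which the gap between the induced matching number of $B_G$ and $\tau_c^+(G)$ grows without bound. By \Cref{cor:bound} we always have $\tau_c^+(G)\le \operatorname{im}(B_G)$, so the task is to construct, for each $n$, an interval graph whose vertex-clique incidence graph supports a large induced matching while the upper clique transversal number stays small. The obstruction to equality in the proper interval case came from allowing one interval to properly contain another; so the natural construction should exploit exactly this, using a few ``long'' intervals that overlap many short pairwise-disjoint intervals.

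A concrete candidate I would try first is a ``double star'' or ``spider'' type interval graph: take $n$ pairwise-disjoint short intervals and two (or a constant number of) long intervals, each overlapping all the short ones but arranged so that the two long intervals themselves overlap each other. Here the maximal cliques are numerous --- essentially one for each short interval together with the long ones covering it --- so that $B_G$ contains an induced matching of size roughly $n$ (each short interval can be matched to a distinct maximal clique in which it is the private element). On the other hand, I would argue that any minimal clique transversal is forced to be small: because the long intervals meet every maximal clique, a single long interval (or a bounded number of them) already forms a clique transversal, and minimality then prevents a transversal from containing many of the short intervals simultaneously. The key computation is to pin down $\tau_c^+(G)$ exactly, or at least bound it by a constant, while $\operatorname{im}(B_G)=\Omega(n)$.

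The steps I would carry out, in order, are: (i) describe the interval representation precisely and read off the list $\mathcal{C}_G$ of maximal cliques, verifying it is an interval graph; (ii) exhibit an induced matching $M$ in $B_G$ of size linear in $n$ by pairing each short interval with a maximal clique in which it appears uniquely among the matched vertices, checking the ``no extra edges'' induced condition against the bipartite adjacency; (iii) bound $\tau_c^+(G)$ from above by analyzing which sets $S\subseteq V(G)$ can be minimal clique transversals, using the domination characterization of \Cref{lem:clique-transversals-via-B_G}(1) together with minimality to show that the long intervals dominate $Y$ and hence block most short intervals from belonging to a minimal transversal; (iv) conclude that $\operatorname{im}(B_G)-\tau_c^+(G)$ is unbounded as $n\to\infty$.

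The main obstacle I anticipate is step (iii): proving a genuinely \emph{small} upper bound on $\tau_c^+(G)$, since upper (maximum minimal) parameters are subtle --- one must show that \emph{every} minimal clique transversal, not merely some natural one, is small, which requires ruling out clever large minimal transversals that mix short and long intervals. I would handle this by a careful case analysis on how many long intervals lie in $S$, repeatedly invoking the private-neighbor viewpoint: if a short interval belongs to a minimal transversal it needs a private maximal clique, but the presence of the long intervals (which sit in nearly all maximal cliques) makes such private cliques scarce, forcing $|S|$ down. Getting this forcing argument tight enough to yield an unbounded gap, rather than merely a constant one, is where the real work lies.
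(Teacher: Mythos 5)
There is a genuine gap: the concrete candidate you propose does not work, and the key claim in your step (iii) is false. Take your graph $G$ with short intervals $s_1,\ldots,s_n$ (pairwise disjoint) and two long intervals $a,b$ overlapping each other and all the $s_i$. Its maximal cliques are exactly the triangles $\{a,b,s_i\}$, $i=1,\ldots,n$. The set $\{s_1,\ldots,s_n\}$ meets every one of these cliques, and it is a \emph{minimal} clique transversal because each $s_i$ has the private maximal clique $\{a,b,s_i\}$. Hence $\tau_c^+(G)\ge n$, while the induced matching number of $B_G$ equals $n$ as well (any induced matching using $a$ or $b$ has size $1$, since $a$ and $b$ are adjacent to all of $Y$), so the gap is zero. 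The flaw in your reasoning is the claim that ``the long intervals dominate $Y$ and hence block most short intervals from belonging to a minimal transversal'': minimality of $S$ only constrains $S$ once a long interval is \emph{in} $S$; nothing forces a minimal clique transversal to contain a long interval at all, and the all-short-intervals set is a large minimal transversal. Private maximal cliques are not scarce for the short intervals here --- each one owns a triangle.

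For comparison, the paper's construction is triangle-free: take two copies of the star $K_{1,q}$ and join their centers by an edge. Since the graph is triangle-free without isolated vertices, clique transversals are vertex covers, and the central edge forces every (minimal) clique transversal $S$ to contain a center $u$; minimality then excludes all $q$ leaves of $u$, so $|S|\le q+1$. Meanwhile $B_G$ is the subdivision of $G$ and the $2q$ pendant edges give an induced matching of size $2q$, yielding a gap of $q-1$. The structural feature that makes this work --- and that your example lacks --- is that the ``many'' vertices (the $2q$ leaves) cannot all lie in a single minimal clique transversal, because an unavoidable vertex (a center) already covers half of their private cliques. If you want to repair your interval-graph construction, you need an obstruction of this kind rather than vertices that sit in every maximal clique.
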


\begin{proof}
Let $q\ge 2$ and let $G$ be the graph obtained from two disjoint
copies of the star graph $K_{1,q}$ by adding an edge between the two vertices of degree $q$.
It is easy to see that $G$ is an interval graph.

We claim that the upper clique transversal number of $G$ is at most $q+1$, while the induced matching number of $B_G$ is at least $2q$.
To see that the upper clique transversal number of $G$ is at most $q+1$, consider an arbitrary minimal
clique transversal $S$ of $G$. Then $S$ must contain at least one of the vertices of degree $q+1$; let
$u$ be such a vertex. Then, since $S$ is minimal, it cannot contain any of the $q$ neighbors of $u$ that are of degree $1$ in $G$.
Thus, $S$ either consists of the two vertices of degree $q+1$ in $G$, or contains $u$ and all its non-neighbors in $G$.
In either case, $S$ is of size at most $q+1$.

It remains to show that the induced matching number of the vertex-clique incidence graph of $G$ is at least $2q$.
As usual, let $B_G = (X,Y;E)$, with $X = V(G)$ and $Y = \mathcal{C}_G$.
Since $G$ is triangle-free and has no isolated vertices, the maximal cliques of $G$ are exactly the edges of $G$, and
the edges of $B_G$ are the pairs $\{x,e\}$ where $x\in V(G)$, $e\in E(G)$, and $x$ is an endpoint of $e$.
Thus, $B_G$ is isomorphic to the graph obtained from $G$ by subdividing each edge.
Let $M$ be the set of edges of $B_G$ of the form $\{x,e\}$ where $x$ is a vertex in $B_G$ of degree $1$ and $e$ is the unique edge incident with it.
Then $M$ is an induced matching in $B_G$ of size $2q$ and hence the induced matching number of $B_G$ is at least $2q$.
\end{proof}

\section{A linear-time algorithm for UCT for cographs}\label{sec:cographs}

In this section, we discuss UCT in the class of cographs and apply results from the literature to obtain a linear-time algorithm.
The class of cographs is defined recursively as follows.
\begin{itemize}
    \item The one-vertex graph $K_1$ is a cograph.
\item Given two cographs $G_1$ and $G_2$, their disjoint union $G_1+G_2$ is a cograph.
\item Given two cographs $G_1$ and $G_2$, their join  $G_1\ast G_2$ (that is, the graph obtained from the disjoint union of $G_1$ and $G_2$ by adding all the edges having one endpoint in $G_1$ and the other one in $G_2$) is a cograph.
\item There are no other cographs.
\end{itemize}

The class of cographs has many equivalent characterizations (see, e.g.,~\cite{MR619603,MR0441560}); in particular, a graph is a cograph if and only if it is $P_4$-free.
As shown by Gurvich~\cite{MR0441560} and also by Karchmer, Linial, Newman, Saks, Wigderson~\cite{MR1217758} (see also Gurvich~\cite{MR2755907} and Golumbic and Gurvich~\cite[Chapter 10]{MR2742439}), if $G$ is a $P_4$-free graph, then the minimal clique transversals of $G$ are exactly its maximal independent sets.
This implies, in particular, that every cograph $G$ satisfies $\tau_c^+(G) = \alpha(G)$ and that the problem of computing an upper clique transversal in a given cograph $G$ is equivalent to the problem of computing a maximum independent set in $G$.
This problem is known to be solvable in linear time in the class of cographs; see McConnell and Spinrad~\cite{zbMATH01334546} for a linear-time algorithm for maximum independent set problem in the more general class of cocomparability graphs.
We therefore obtain the following result.

\begin{theorem}\label{thm:cographs}
{\sc Upper Clique Transversal} can be solved in linear time in the class of cographs.
\end{theorem}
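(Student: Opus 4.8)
The plan is to reduce the problem entirely to computing a maximum independent set, using the structural result already cited in the excerpt. The key fact, attributed to Gurvich and to Karchmer, Linial, Newman, Saks, and Wigderson, is that in any $P_4$-free graph $G$ (that is, any cograph), the minimal clique transversals coincide exactly with the maximal independent sets. I would begin by invoking this fact directly, observing that it immediately yields $\tau_c^+(G) = \alpha(G)$ for every cograph $G$: the maximum-size minimal clique transversal is the same as the maximum-size maximal independent set, and a maximum independent set is in particular maximal, so the largest maximal independent set has size $\alpha(G)$.

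With this identification in hand, the algorithmic content of the theorem collapses to the claim that a maximum independent set can be computed in linear time in the class of cographs. Here I would appeal to the linear-time algorithm of McConnell and Spinrad for the maximum independent set problem, which works even in the more general class of cocomparability graphs (and cographs are cocomparability graphs). Since the output of such an algorithm is a maximum independent set $I$, and $I$ is automatically a minimal clique transversal by the structural equivalence, the algorithm returns a valid upper clique transversal. For the decision version UCT, one simply compares $\alpha(G) = |I|$ against the input integer $k$.

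The main obstacle, to the extent there is one, is purely expository rather than mathematical: the theorem is essentially a one-line corollary of results stated in the surrounding text, so the real work lies in correctly citing the structural characterization and the maximum independent set algorithm, and in making explicit the chain $\tau_c^+(G) = \alpha(G)$. One subtlety worth noting is that the equivalence ``minimal clique transversals $=$ maximal independent sets'' is what makes this clean; without it one would have to argue separately that the computed set is minimal. Since the excerpt already records this equivalence and its consequence $\tau_c^+(G) = \alpha(G)$, I would keep the proof short: state that by the cited characterization the upper clique transversal problem on cographs is equivalent to the maximum independent set problem, which is solvable in linear time on cographs by the result of McConnell and Spinrad, and conclude.
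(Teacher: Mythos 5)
Your proposal is correct and is essentially the same argument the paper gives in Section~6: it invokes the Gurvich/Karchmer--Linial--Newman--Saks--Wigderson characterization of minimal clique transversals of $P_4$-free graphs as maximal independent sets, concludes $\tau_c^+(G)=\alpha(G)$, and then applies the linear-time maximum independent set algorithm of McConnell and Spinrad. (The paper also supplies, in an appendix, an alternative self-contained proof via dynamic programming on the cotree, but that is presented only for completeness.)
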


Note also that while, by~\Cref{thm:MMCT-split}, every split graph $G$ satisfies $\tau_c^+(G)\le \alpha(G)$, in the class of cographs this inequality is satisfied with equality.

For completeness, we give in \Cref{sec:appendix} a direct proof of \Cref{thm:cographs} based on the recursive structure of cographs.

\section{UCT for graphs with bounded cliquewidth}\label{sec:bounded-cliquewidth}

In this section, we show that UCT can be solved in polynomial time in any class of graphs with bounded cliquewidth.
The term ``cliquewidth'' was introduced in 2000 by Courcelle and Olariu~\cite{MR1743732}, although the concept has been defined earlier, in the context of graph grammars in 1993 by Courcelle, Engelfriet, and Rozenberg~\cite{MR1217156}.
Cliquewidth is a graph complexity measure that is bounded whenever treewidth is bounded (see~\cite{MR1743732,MR2148860}) but, unlike treewidth, can also be bounded on classes of dense graphs, such as complete graphs and complete bipartite graphs.
The \emph{cliquewidth} of a graph $G$ is defined as the minimum number $k$ such that $G$ admits a \emph{$k$-expression}, that is, a construction of a graph isomorphic to $G$ in which each vertex is equipped with a label $\ell(v)$ from the set $L = \{1,\ldots, k\}$ of labels using the following operations:
\begin{enumerate}[(1)]
    \item Creation of a new graph with a single vertex $v$ having label $i\in L$. (This operation is denoted by $i_v$.)
    \item Disjoint union $G_1\oplus G_2$ of two already constructed labeled graphs $G_1$ and $G_2$.
\item For any two distinct labels $i,j\in L$, the addition of all edges between every vertex with label $i$ and every vertex with label $j$ (denoted by $\eta_{i,j}$).
\item For any two distinct labels $i,j\in L$, relabeling of every vertex with label $i$ to have label $j$  (denoted by $\rho_{i\to j}$).
\end{enumerate}

For any positive integer $k$, many {\sf NP}-hard decision and optimization problems on graphs can be solved in linear time for a graph $G$ given with a $k$-expression.
In particular, as shown by Courcelle, Makowsky, and Rotics in~\cite{MR1739644}, this is the case for any graph problem that can be defined in {\sf MSO$_1$}, the fragment of monadic second order logic where quantified relation symbols are permitted on relations of arity 1 (such as vertices), but not of arity 2 (such as edges) or more, which means that, with graphs, one can quantify over sets of vertices.
Furthermore, as shown by Fomin and Korhonen~\cite{MR4490048}, for every fixed positive integer $k$ there is an algorithm running in time $2^{2^{\mathcal{O}(k)}}n^2$ that takes as input an $n$-vertex graph with cliquewidth at most $k$ and computes a $(2^{2k+1}-1)$-expression of $G$.
Cliquewidth is closely related to some other graph width parameters, in particular to rankwidth~\cite{MR2232389} and Boolean-width~\cite{MR2857670,MR3126918}, since bounded cliquewidth is equivalent to bounded rankwidth or bounded Boolean-width.
For further information about cliquewidth and related width parameters, we refer to the surveys~\cite{MR3967291,DBLP:journals/cj/HlinenyOSG08}.

The recursive structure of cographs implies that cographs have cliquewidth at most~$2$.
Thus, the next theorem is a generalization of \Cref{thm:cographs}.

\begin{theorem}\label{thm:bdd-cwd}
For every positive integer $k$, {\sc Upper Clique Transversal} can be solved in linear time in the class of graphs of cliquewidth at most $k$ if the input graph is given with a $k$-expression.
\end{theorem}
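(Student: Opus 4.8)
The plan is to express the property ``$G$ has a minimal clique transversal of size at least $k$'' as a formula in the logic {\sf MSO$_1$}, and then invoke the metatheorem of Courcelle, Makowsky, and Rotics~\cite{MR1739644}, which guarantees that any {\sf MSO$_1$}-expressible graph property can be decided in linear time on a graph supplied with a $k$-expression. Since the upper clique transversal number is the largest $k$ for which such a set exists, solving {\sc Upper Clique Transversal} amounts to evaluating this formula (or, in the optimization version, searching for the optimal $k$, which the metatheorem also handles for {\sf MSO$_1$}-definable optimization objectives).

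The substantive work is therefore to write down the {\sf MSO$_1$} formula. The key obstacle is that {\sf MSO$_1$} permits quantification only over vertices and \emph{sets of vertices}, not over edges or relations of higher arity; in particular we cannot directly quantify over ``the set of maximal cliques'' as objects. The workaround is to express everything in terms of vertex sets. First I would write an auxiliary formula $\mathrm{clique}(C)$ asserting that a vertex set $C$ is a clique, namely $\forall u\,\forall v\,\bigl((u\in C \wedge v\in C \wedge u\neq v)\rightarrow \mathrm{adj}(u,v)\bigr)$, where $\mathrm{adj}$ is the edge relation available in {\sf MSO$_1$}. Then $\mathrm{maxclique}(C)$ says $C$ is a clique and $\forall w\,\bigl(w\notin C \rightarrow \exists v\,(v\in C \wedge \neg\,\mathrm{adj}(w,v))\bigr)$, i.e.\ no vertex can be added. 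A set $S$ is then a clique transversal iff $\forall C\,\bigl(\mathrm{maxclique}(C)\rightarrow \exists s\,(s\in C \wedge s\in S)\bigr)$, and $S$ is a \emph{minimal} clique transversal iff it is a clique transversal and every vertex of $S$ is \emph{private} to some maximal clique: $\forall u\,\bigl(u\in S \rightarrow \exists C\,(\mathrm{maxclique}(C)\wedge u\in C \wedge \forall t\,((t\in C\wedge t\in S)\rightarrow t=u))\bigr)$. All of these quantifiers range over vertices and vertex sets, so each formula lies in {\sf MSO$_1$}.

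Having $\mathrm{minCT}(S)$ in hand, the decision problem for a fixed $k$ is captured by $\exists S\,\bigl(\mathrm{minCT}(S)\wedge |S|\ge k\bigr)$; the cardinality constraint $|S|\ge k$ is handled within the Courcelle--Makowsky--Rotics framework, which applies to the optimization of the size of an {\sf MSO$_1$}-definable vertex set. Concretely, the theorem of~\cite{MR1739644} states that the maximum cardinality of a vertex set $S$ satisfying a fixed {\sf MSO$_1$} predicate can be computed in time linear in the size of the input when a $k$-expression is given, and this maximum is exactly $\tau_c^+(G)$. This yields the linear-time algorithm and completes the proof. I would close by noting that the same formula, combined with the result of Fomin and Korhonen~\cite{MR4490048} for computing an approximate expression, gives a polynomial-time algorithm even when the $k$-expression is not supplied, thereby justifying the entry for bounded cliquewidth in \Cref{fig:graphclass}.
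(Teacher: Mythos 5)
Your proposal is correct and follows essentially the same route as the paper: express ``$X$ is a minimal clique transversal'' in {\sf MSO$_1$} via nested formulas for clique, maximal clique, and clique transversal, then invoke the optimization version of the Courcelle--Makowsky--Rotics metatheorem to maximize $|S|$. Your phrasing of minimality (every vertex of $S$ has a private maximal clique) is trivially equivalent to the paper's (deleting any vertex destroys the transversal property), so the two proofs coincide in substance.
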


\begin{proof}
It suffices to show that {\sc Upper Clique Transversal} can be defined in {\sf MSO$_1$}, as the theorem will then follow from the result of Courcelle, Makowsky, and Rotics (\cite[Theorem 4]{MR1739644}).

To show that {\sc Upper Clique Transversal} can be defined in {\sf MSO$_1$}, we construct a fixed {\sf MSO$_1$}-formula $\varphi$ such that for any graph $G$ and a set $X\subseteq V(G)$, $(G,X)\models \varphi$ if and only if $X$ is a minimal clique transversal in $G$.
The graph $G$ is represented with the universe $V$ (the set of vertices), with one variable per vertex, and the binary adjacency relation $E$.
Set variables are represented with capital letters, and the membership relation $v\in X$ is written as a unary relation $X(v)$.
The formula $\varphi$ is composed from the following simpler formulas, as follows.
\begin{itemize}
    \item First, we have a formula $\varphi_1$ such that $(G,X)\models \varphi_1$ if and only if $X$ is a clique in $G$, that is, any two distinct vertices in $X$ are adjacent.
    In formulae: \[\varphi_1(G,X) \equiv (\forall x)(\forall y)(x\neq y \wedge X(x) \wedge X(y) \Rightarrow E(x,y))\,.\]
    \item Next, formula $\varphi_2$ is such that $(G,X)\models \varphi_2$ if and only if $X$ is a maximal clique in $G$, that is, $X$ is a clique and every vertex not in $X$ is nonadjacent to some vertex in $X$.
     In formulae:
     \[\varphi_2(G,X) \equiv\varphi_1(G,X) \wedge (\forall x)(\neg X(x) \Rightarrow (\exists y)(X(y) \wedge \neg E(x,y))\,.\]
    \item Next, formula $\varphi_3$ is such that $(G,X)\models \varphi_3$ if and only if $X$ is clique transversal in $G$, that is, $X$ contains a vertex of every maximal clique in $G$.
    In formulae:
     \[\varphi_3(G,X)\equiv (\forall Y)(
    \varphi_2(G,Y) \Rightarrow (\exists x)(X(x)\wedge Y(x))\,.\]
    \item Finally, we have the desired formula $\varphi$ such that $(G,X)\models \varphi$ if and only if $X$ is minimal clique transversal in $G$, that is, $X$ is clique transversal in $G$ and for every $x\in X$, the set $X\setminus \{x\}$ is not a clique transversal in $G$.
    In formulae:
     \[\varphi(G,X)\equiv \varphi_3(G,X) \wedge (\forall x)(X(x)\Rightarrow (\neg \varphi_3(G,X\setminus\{x\})))\,.\]
\end{itemize}
This completes the proof.
\end{proof}

Since every problem that can be defined in {\sf MSO$_1$} can also be expressed in the more general logic {\sf MSO$_2$} (where one can quantify over sets of vertices as well as sets of edges), a result by Arnborg, Lagergren, and Seese~\cite{MR1105479} applies, stating that in any class of graphs with bounded treewidth, any optimization problem expressible in {\sf MSO$_2$} can be solved in linear time.
The result assumes that the graph is equipped with a tree decomposition of bounded width; however, as shown by Bodlaender~\cite{MR1417901}, such a tree decomposition can be computed in linear time.
Therefore, {\sc Upper Clique Transversal} can be solved in linear time in any class of graphs with bounded treewidth (which is not surprising, given \Cref{thm:bdd-cwd} and the fact that bounded treewidth implies bounded cliquewidth).

\section{Conclusion}\label{sec:conclusion}

We performed a systematic study of the complexity of \textsc{Upper Clique Transversal} in various graph classes, showing, on the one hand, {\sf NP}-completeness of the problem in the classes of chordal graphs, chordal bipartite graphs, and line graphs of bipartite graphs, and, on the other hand, linear-time solvability in the classes of split graphs and proper interval graphs.

Our work leaves open several questions.

\begin{question}
What is the complexity of computing a minimal clique transversal in a given graph?
\end{question}

\begin{sloppypar}
UCT can be solved in polynomial time in classes of graphs with bounded cliquewidth.
It is an interesting question whether similar results can be derived for other width parameters  generalizing treewidth, such as tree-independence number~\cite{MR3775804,MR4664382}, mim-width~\cite{vatshelle2012new}, their common generalization sim-width~\cite{MR3721445}, and twin-width~\cite{MR4402362}.
Our {\sf NP}-completeness result for UCT in chordal graphs (\Cref{sec:NP-c-chordal}) implies that UCT is {\sf NP}-hard for graphs with tree-independence number at most one (see~\cite{MR4664382}) and consequently for graphs with sim-width at most one (see~\cite[Lemma 5]{MR4563598}).
Similarly, the {\sf NP}-completeness result for UCT in bipartite planar graphs (\Cref{thm:NP-c-chordal-bip}) implies that UCT is {\sf NP}-hard for graphs with twin-width at most~$6$ (see~\cite{MR4612945}).
However, the question regarding the complexity of UCT for graph classes with bounded mim-width remains open, even in the following special case.
\end{sloppypar}

\begin{question}
What is the complexity of \textsc{Upper Clique Transversal} in the class of interval graphs?
\end{question}

Let us note, however, that due to the connection with \textsc{Independent Dominating Set} (cf.~the proof of \Cref{thm:NP-c-chordal-bip}) and known algorithmic results for graphs of bounded mim-width (see~\cite{MR3126918,MR3126917}), UCT is polynomial-time solvable in any class of triangle-free graphs in which mim-width is bounded and quickly computable, for example, for circular convex graphs (see~\cite{MR4668325}).

By \Cref{cor:bound}, the upper clique transversal number of any graph $G$ is bounded from above by the induced matching number of its vertex-clique incidence graph $B_G$.
\Cref{thm:PIG-uctn=imn} shows that this upper bound is attained with equality if $G$ is a proper interval graph.
This motivates the following.

\begin{question}
For what graphs $G$ is the upper clique transversal number equal to the induced matching number of the vertex-clique incidence graph?
\end{question}

While not all interval graphs have the stated property (by \Cref{obs:interval-graphs-gap}), the property is satisfied by graphs other than proper interval graphs; for example, all cycles have the property.

The upper clique transversal number is a trivial upper bound for the clique transversal number; however, the ratio between these two parameters can be arbitrarily large in general.
For instance, in the complete bipartite graph $K_{1,q}$ the former one has value $q$ while the latter one has value $1$.
This leads to the following.

\begin{question}
For which graph classes is the ratio (or even the difference) between the clique transversal number and the upper clique transversal number bounded?
\end{question}

\begin{sloppypar}
The focus of our paper was on classical complexity, in the sense that the aim was to understand which restrictions on the input graphs result in polynomially solvable cases.
It would be natural to explore the complexity of the UCT problem also in terms of other measures, for example with respect to parameterized complexity and approximability.
Regarding parameterized complexity, \Cref{thm:bdd-cwd} leads to an FPT algorithm for the UCT problem when parameterized by the cliquewidth of the graph.
It may be interesting to study the question with respect to other parameters, including the upper clique transversal number.
\end{sloppypar}

\begin{question}
What is the parameterized complexity of \textsc{Upper Clique Transversal} with respect to its natural parameterization?
\end{question}

Using hypergraph techniques, it can be shown that the problem is in {\sf XP}, see~\cite{boros2023dually}.

\begin{question}
How well can the upper clique transversal number of a graph be approximated in polynomial time?
\end{question}

\subsection*{Acknowledgements}

We are grateful to Nikolaos Melissinos and Haiko M\"uller for their helpful comments.
The work of the first named author is supported in part by the Slovenian Research and Innovation Agency (I0-0035, research program P1-0285 and research projects N1-0102, N1-0160, J1-3001, J1-3002, J1-3003, J1-4008, and J1-4084) and by the research program CogniCom (0013103) at the University of Primorska.
Part of the work was done while the author was visiting Osaka Prefecture University in Japan, under the operation Mobility of Slovene higher education teachers 2018--2021, co-financed by the Republic of Slovenia and the European Union under the European Social Fund.
The second named author is partially supported by JSPS KAKENHI Grant Number JP17K00017, 20H05964, and 21K11757, Japan.

\appendix
\section{A direct proof of \texorpdfstring{\Cref{thm:cographs}}{Theorem 6.1}}
\label{sec:appendix}

Given a cograph $G = (V,E)$, the recursive procedure building $G$ from smaller cographs can be represented with a full binary tree called a \emph{cotree} of $G$.
A \emph{full binary tree} is a rooted tree $T$ such that each node of $T$ has either $0$ or exactly $2$ children; nodes without any children are the \emph{leaves} of $T$ and nodes with exactly $2$ children are the \emph{internal nodes} of $T$.
A cotree of $G$ is a full binary tree $T$ such that the leaves of $T$ are bijectively labeled with the vertices of $G$ and each internal node corresponds to either the disjoint union or the join operation.
Note that each node of $T$ naturally corresponds to an induced subgraph of $G$: the leaves correspond to the one-vertex subgraphs, and each internal node corresponds to the induced subgraph of $G$ obtained by either the disjoint union or the join operation from the two subgraphs corresponding to the two children of the node.
As shown by Corneil, Perl, and Stewart~\cite{MR807891}, the cotree of a given cograph $G$ can be computed in linear time.
In their definition, the cotree doe not need to be binary, but we can assume that it is, since otherwise we can binarize it in time linear in the size of the tree (cf.~\cite{MR1341429}).

We now show that we can efficiently compute an upper clique transversal in a given cograph $G$ by using a dynamic programming approach traversing the cotree of $G$ bottom-up.
To develop the recurrence relations, we need some preliminary observations.

\begin{lemma}\label{lem:join-cliques}
Let $G_1$ and $G_2$ be two graphs, let $G$ be their join, and let $C\subseteq V(G)$.
Then, $C$ is a maximal clique in $G$ if and only if $C_i:= C\cap V(G_i)$ is a maximal clique in $G_i$ for $i = 1,2$.
\end{lemma}

\begin{proof}
Assume first that $C$ is a maximal clique in $G$.
For $i = 1,2$, the set $C_i:= C\cap V(G_i)$ is a clique in $G_i$, since $G_i$ is an induced subgraph of $G$.
Furthermore, $C_i$ is a maximal clique in $G_i$ since otherwise, for any clique $C_i'$ in $G_i$ properly containing $C_i$, the set $C_i'\cup C_{3-i}$ would be a clique in $G$ properly containing $C$, contradicting the maximality of $C$.

Conversely, assume that the set $C_i:= C\cap V(G_i)$ is a maximal clique in $G_i$ for $i = 1,2$.
Since $G$ is the join of $G_1$ and $G_2$, the set $C$ is a clique in $G$.
Furthermore, it is a maximal clique.
Suppose for a contradiction that there exists a clique $C'$ in $G$ properly containing $C$.
Then, there exists some $i\in \{1,2\}$ such that the set $C_i':= C\cap V(G_i)$ properly contains $C_i$.
It follows that $C_i'$ is a clique in $G_i$ properly containing $C_i$, a contradiction with the maximality of $C_i$.
\end{proof}

\begin{lemma}\label{lem:join-transversals}
Let $G_1$ and $G_2$ be two graphs, let $G$ be their join, and let $S\subseteq V(G)$.
Then, $S$ is a minimal clique transversal in $G$ if and only if $S\subseteq V(G_i)$ is a minimal clique transversal in $G_i$ for some $i\in \{1,2\}$.
\end{lemma}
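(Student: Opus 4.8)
The plan is to reduce the statement to a clean characterization of (not-yet-minimal) clique transversals of the join, and then layer minimality on top. Writing $S_i := S \cap V(G_i)$ for $i \in \{1,2\}$, I would first establish the following claim: $S$ is a clique transversal of $G$ if and only if $S_1$ is a clique transversal of $G_1$ or $S_2$ is a clique transversal of $G_2$. This follows directly from \Cref{lem:join-cliques}: the maximal cliques of $G$ are exactly the sets $C_1 \cup C_2$ with $C_i$ a maximal clique of $G_i$, and since $C_i \subseteq V(G_i)$ we have $S \cap (C_1 \cup C_2) = (S_1 \cap C_1) \cup (S_2 \cap C_2)$. Hence $S$ fails to be a clique transversal precisely when some maximal clique of $G$ avoids $S$, i.e. precisely when there exist a maximal clique $C_1$ of $G_1$ with $S_1 \cap C_1 = \emptyset$ and a maximal clique $C_2$ of $G_2$ with $S_2 \cap C_2 = \emptyset$ simultaneously; that is, precisely when neither $S_1$ nor $S_2$ is a clique transversal of its respective graph. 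The contrapositive is exactly the claim.

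For the forward direction, suppose $S$ is a minimal clique transversal of $G$. By the claim we may assume without loss of generality that $S_1$ is a clique transversal of $G_1$. I would then show $S_2 = \emptyset$: if some $v \in S_2$ existed, then $(S \setminus \{v\})$ still has $G_1$-part equal to $S_1$, which is a clique transversal of $G_1$, so by the claim $S \setminus \{v\}$ remains a clique transversal of $G$, contradicting minimality. Thus $S = S_1 \subseteq V(G_1)$. Minimality of $S$ in $G_1$ then follows from the claim again: if $S \setminus \{v\}$ were a clique transversal of $G_1$ for some $v \in S$, the claim would make it a clique transversal of $G$, contradicting minimality of $S$ in $G$.

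For the reverse direction, assume without loss of generality that $S \subseteq V(G_1)$ is a minimal clique transversal of $G_1$, so that $S_1 = S$ and $S_2 = \emptyset$. The claim immediately yields that $S$ is a clique transversal of $G$. To verify minimality, fix $v \in S$ and apply the claim to $S \setminus \{v\}$: its $G_1$-part is $S \setminus \{v\}$, which is not a clique transversal of $G_1$ by minimality there, while its $G_2$-part is $\emptyset$. Since $G_1$ and $G_2$ are nonempty (as they always are in the recursive cograph construction to which this lemma is applied), $G_2$ possesses at least one maximal clique, so $\emptyset$ is not a clique transversal of $G_2$. Both alternatives of the claim therefore fail, giving that $S \setminus \{v\}$ is not a clique transversal of $G$, which proves minimality.

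Given \Cref{lem:join-cliques}, the argument is essentially routine; the one genuine subtlety, and the step I would be most careful about, is the empty-part edge case in the reverse direction, where one must explicitly rule out $\emptyset$ being a clique transversal of $G_2$. This is the only place where the nonemptiness of the two graphs is used, and I would flag it as such.
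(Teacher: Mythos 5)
Your proof is correct and follows essentially the same route as the paper's: both arguments rest on \Cref{lem:join-cliques} and the observation that $S$ is a clique transversal of the join if and only if at least one of its two parts is a clique transversal of the corresponding factor. The only differences are organizational --- you extract that observation as a standalone biconditional and deduce $S\cap V(G_2)=\emptyset$ by a direct one-vertex-removal argument, whereas the paper reaches the same conclusion by taking a minimal clique transversal $S_1'\subseteq S\cap V(G_1)$ of $G_1$ and invoking the already-proved converse direction; your explicit flagging of the nonemptiness of $G_1$ and $G_2$ (needed so that $\emptyset$ is not a clique transversal of $G_2$) is a point the paper leaves implicit.
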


\begin{proof}
Assume first that $S\subseteq V(G_i)$ is a minimal clique transversal in $G_i$ for some $i\in \{1,2\}$.
By symmetry, we may assume without loss of generality that $i = 1$.
For an arbitrary maximal clique $C$ in $G$, the set $C_1:= C\cap V(G_1)$ is a maximal clique in $G_1$ by \Cref{lem:join-cliques}; hence $S\cap C_1\neq \emptyset$ and consequently $S\cap C\neq \emptyset$.
It follows that $S$ is a clique transversal in $G$.
To argue minimality, suppose for a contradiction that there exists a proper subset $S'$ of $S$ that is a clique transversal in $G$.
Then $S'$ is a clique transversal in $G_1$, since otherwise we could choose any maximal clique $C_1'$ in $G_1$ missed by $S'$ and any maximal clique $C_2$ in $G_2$, and their union $C_1'\cup C_2$ would be a maximal clique in $G$ (by \Cref{lem:join-cliques}) missed by $S'$.
But now, the fact that $S'$ is a clique transversal in $G_1$ contradicts the assumption that $S$ is a minimal clique transversal in $G_1$.
This shows that $S$ is a minimal clique transversal in $G$.

Conversely, assume that $S$ is a minimal clique transversal in $G$.
Observe first that one of the two sets $S_i:= S\cap V(G_i)$ for $i = 1,2$, is a clique transversal in $G_i$.
This is because if each $S_i$ misses a maximal clique $C_i$ in $G_i$, then the set $C_1\cup C_2$ would be a maximal clique in $G$ (by \Cref{lem:join-cliques}) missed by $S$.
By symmetry, we may assume without loss of generality that $S_1$ is a clique transversal in $G_1$.
For an arbitrary maximal clique $C$ in $G$, the set $C_1:= C\cap V(G_1)$ is a maximal clique in $G_1$ by \Cref{lem:join-cliques}; hence $S_1\cap C_1\neq \emptyset$ and consequently $S_1\cap C\neq \emptyset$.
It follows that $S_1$ is a clique transversal in $G$.
Let $S_1'$ be a minimal clique transversal in $G_1$ such that $S_1'\subseteq S_1$.
As shown in the previous paragraph, any minimal clique transversal in $G_1$ is a minimal clique transversal in $G$.
Therefore, $S_1'$ is a minimal clique transversal in $G$.
Since $S_1'\subseteq S$ and $S$ is a minimal clique transversal in $G$, we must have $S = S_1'$.
This shows that $S\subseteq V(G_1)$ and $S$ is a minimal clique transversal in $G_1$.
\end{proof}

\begin{sloppypar}
\begin{corollary}\label{cor:join}
Let $G_1$ and $G_2$ be two graphs, let $G$ be their join.
Then $\tau_c^+(G) = \max\{\tau_c^+(G_1),\tau_c^+(G_2)\}$.
\end{corollary}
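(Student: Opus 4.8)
The plan is to derive this corollary directly from \Cref{lem:join-transversals}, which already establishes the key structural fact: the minimal clique transversals of the join $G$ are precisely the minimal clique transversals of $G_1$ (viewed as subsets of $V(G_1)\subseteq V(G)$) together with the minimal clique transversals of $G_2$. Since $\tau_c^+$ is defined as the maximum size over all minimal clique transversals, the argument reduces to a bookkeeping observation about maxima over a union of two families.

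First I would recall that, by \Cref{lem:join-transversals}, a set $S\subseteq V(G)$ is a minimal clique transversal in $G$ if and only if $S\subseteq V(G_i)$ and $S$ is a minimal clique transversal in $G_i$ for some $i\in\{1,2\}$. This means the family of minimal clique transversals of $G$ is exactly the disjoint-as-collections union of the families of minimal clique transversals of $G_1$ and of $G_2$ (an intersection is possible only in degenerate cases, but this does not affect the maximum). Consequently, every minimal clique transversal of $G$ has size at most $\max\{\tau_c^+(G_1),\tau_c^+(G_2)\}$, giving the upper bound $\tau_c^+(G)\le\max\{\tau_c^+(G_1),\tau_c^+(G_2)\}$.

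For the matching lower bound, I would take an index $i\in\{1,2\}$ achieving the maximum and a minimal clique transversal $S_i$ of $G_i$ with $|S_i|=\tau_c^+(G_i)=\max\{\tau_c^+(G_1),\tau_c^+(G_2)\}$. By the ``if'' direction of \Cref{lem:join-transversals}, this same set $S_i$ is a minimal clique transversal of $G$, so $\tau_c^+(G)\ge |S_i|=\max\{\tau_c^+(G_1),\tau_c^+(G_2)\}$. Combining the two inequalities yields the claimed equality.

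I do not anticipate any real obstacle here, since \Cref{lem:join-transversals} does all the structural work; the only point requiring a word of care is that the equality $\tau_c^+(G)=\max\{\tau_c^+(G_1),\tau_c^+(G_2)\}$ depends on both graphs being nonempty so that each $\tau_c^+(G_i)$ is well defined, which is implicit in the cograph recursion where $G_1$ and $G_2$ are themselves cographs (hence have at least one vertex and at least one maximal clique). Thus the proof is essentially a two-sentence invocation of the preceding lemma.
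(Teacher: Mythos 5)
Your proposal is correct and follows exactly the route the paper intends: \Cref{cor:join} is stated as an immediate consequence of \Cref{lem:join-transversals}, whose characterization identifies the family of minimal clique transversals of the join with the union of the two families for $G_1$ and $G_2$, so taking maxima of sizes gives the claimed equality. The paper gives no separate proof, and your two-inequality bookkeeping is precisely the intended (and correct) argument.
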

\end{sloppypar}

We will also need the following simple observation.

\begin{observation}\label{obs:disjoint-union-transversals}
Let $G_1$ and $G_2$ be two graphs, let $G$ be their disjoint union and let $S\subseteq V(G)$.
Then, $S$ is a minimal clique transversal in $G$ if and only if $S_i:= S\cap V(G_i)$ is a minimal clique transversal in $G_i$ for $i = 1,2$.
\end{observation}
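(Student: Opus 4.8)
The statement to prove is \Cref{obs:disjoint-union-transversals}: for the disjoint union $G$ of two graphs $G_1$ and $G_2$, a set $S\subseteq V(G)$ is a minimal clique transversal in $G$ if and only if $S_i := S\cap V(G_i)$ is a minimal clique transversal in $G_i$ for $i=1,2$.

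\textbf{Approach.} The plan is to reduce everything to the single structural fact that, in a disjoint union, the maximal cliques of $G$ are precisely the maximal cliques of $G_1$ together with the maximal cliques of $G_2$; no clique can have vertices in both parts, since there are no edges between $V(G_1)$ and $V(G_2)$. This is the disjoint-union counterpart of \Cref{lem:join-cliques}, and once it is recorded the rest of the argument is bookkeeping. First I would state this clique decomposition explicitly, so that I can freely identify a maximal clique $C$ of $G$ with a maximal clique of exactly one of the $G_i$.

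\textbf{Key steps.} For the ``if'' direction, I would assume each $S_i$ is a minimal clique transversal in $G_i$ and verify the two defining properties of a minimal clique transversal in $G$. To see that $S$ is a clique transversal: any maximal clique $C$ of $G$ lies entirely in some $G_i$ and is a maximal clique there, so $S_i\cap C\neq\emptyset$ and hence $S\cap C\neq\emptyset$. For minimality, I would show that removing any $s\in S$ destroys the transversal property: say $s\in V(G_i)$; since $S_i$ is a \emph{minimal} clique transversal in $G_i$, the set $S_i\setminus\{s\}$ misses some maximal clique $C$ of $G_i$, and because $C$ is also a maximal clique of $G$ contained in $V(G_i)$, the set $S\setminus\{s\}$ (whose trace on $V(G_i)$ is exactly $S_i\setminus\{s\}$) misses $C$ as well. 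For the ``only if'' direction, I would assume $S$ is a minimal clique transversal in $G$. Each $S_i$ is a clique transversal in $G_i$: every maximal clique of $G_i$ is a maximal clique of $G$, so it is hit by $S$, and since it sits inside $V(G_i)$ it is hit by $S_i$. Minimality of each $S_i$ then follows from the minimality of $S$: for $s\in S_i$, the set $S\setminus\{s\}$ misses some maximal clique $C$ of $G$; that clique lies in some $V(G_j)$, and since $s\in V(G_i)$ witnesses a failure of the transversal property only on the side containing $s$, one checks that $C\subseteq V(G_i)$ and hence $S_i\setminus\{s\}$ misses the maximal clique $C$ of $G_i$.

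\textbf{Main obstacle.} There is no real obstacle here; the observation is genuinely routine once the clique decomposition is in place, which is why the authors labelled it an \emph{observation} rather than a lemma. The only point requiring a moment's care is the last step of the ``only if'' direction: when $S\setminus\{s\}$ misses a maximal clique $C$ of $G$ with $s\in V(G_i)$, one must confirm that $C\subseteq V(G_i)$ rather than $V(G_{3-i})$. This holds because for a clique $C\subseteq V(G_{3-i})$ we have $(S\setminus\{s\})\cap C = S\cap C = S_{3-i}\cap C$, so removing $s$ from the $G_i$-side cannot newly expose a clique on the other side; thus the exposed clique must lie in $V(G_i)$, giving the desired failure of $S_i\setminus\{s\}$. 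With this verified, the equivalence is complete.
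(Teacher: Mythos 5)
Your proof is correct. The paper states this observation without proof, treating it as immediate; your argument --- decomposing the maximal cliques of the disjoint union as the union of the maximal cliques of the two parts and then checking the transversal and minimality conditions componentwise, including the careful point that the maximal clique exposed by deleting $s$ must lie on the same side as $s$ --- is exactly the routine verification the authors intended, mirroring their explicit proof of \Cref{lem:join-transversals} for the join operation.
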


\begin{corollary}\label{cor:disjoint-union}
Let $G_1$ and $G_2$ be two graphs, let $G$ be their disjoint union.
Then $\tau_c^+(G) = \tau_c^+(G_1)+\tau_c^+(G_2)$.
\end{corollary}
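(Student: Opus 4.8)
The final statement to prove is Corollary~\ref{cor:disjoint-union}: for the disjoint union $G$ of two graphs $G_1$ and $G_2$, we have $\tau_c^+(G) = \tau_c^+(G_1) + \tau_c^+(G_2)$. The plan is to derive this directly from Observation~\ref{obs:disjoint-union-transversals}, which already establishes the structural correspondence between minimal clique transversals of $G$ and pairs of minimal clique transversals of $G_1$ and $G_2$. Since the observation is stated just above and can be assumed, the corollary is essentially a counting consequence of a bijection, so the proof should be short.

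First I would invoke Observation~\ref{obs:disjoint-union-transversals}: a set $S \subseteq V(G)$ is a minimal clique transversal in $G$ if and only if $S_i := S \cap V(G_i)$ is a minimal clique transversal in $G_i$ for $i = 1,2$. Since $V(G_1)$ and $V(G_2)$ are disjoint and partition $V(G)$, we have $S = S_1 \cup S_2$ with $S_1 \cap S_2 = \emptyset$, whence $|S| = |S_1| + |S_2|$. This sets up a size-preserving correspondence between minimal clique transversals $S$ of $G$ and pairs $(S_1, S_2)$ of minimal clique transversals of $G_1$ and $G_2$.

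To prove the equality, I would show the two inequalities. For the direction $\tau_c^+(G) \ge \tau_c^+(G_1) + \tau_c^+(G_2)$, take a maximum minimal clique transversal $S_i$ in each $G_i$, so that $|S_i| = \tau_c^+(G_i)$; by Observation~\ref{obs:disjoint-union-transversals} the union $S = S_1 \cup S_2$ is a minimal clique transversal in $G$ of size $\tau_c^+(G_1) + \tau_c^+(G_2)$, giving the bound. For the reverse direction $\tau_c^+(G) \le \tau_c^+(G_1) + \tau_c^+(G_2)$, take a maximum minimal clique transversal $S$ in $G$; by the observation, each $S_i$ is a minimal clique transversal in $G_i$, so $|S_i| \le \tau_c^+(G_i)$, and summing gives $\tau_c^+(G) = |S| = |S_1| + |S_2| \le \tau_c^+(G_1) + \tau_c^+(G_2)$. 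Combining the two inequalities yields the claimed equality.

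Honestly, there is no real obstacle here: all the work has been front-loaded into Observation~\ref{obs:disjoint-union-transversals}, and the corollary is a routine additivity argument. The only point worth stating carefully is that the decomposition $S = S_1 \cup S_2$ is a disjoint union, so that cardinalities add; this is immediate from the fact that the disjoint union operation partitions the vertex set. I would therefore keep the proof to just a few lines, essentially the two inequalities above.
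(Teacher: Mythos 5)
Your proof is correct and follows exactly the route the paper intends: the corollary is stated as an immediate consequence of Observation~\ref{obs:disjoint-union-transversals}, and your two-inequality argument with the disjoint decomposition $S = S_1 \cup S_2$ is precisely the routine additivity step the paper leaves implicit.
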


We now have everything ready to give a proof of \Cref{thm:cographs}.

\begin{proof}[Proof of \Cref{thm:cographs}.]
Let $T$ be the cotree of a given cograph $G = (V,E)$.
We use a dynamic programming approach, traversing the cotree $T$ from the leaves to the root.
Every node $x$ of the tree $T$ represents an induced subgraph $G_x$ of $G$, namely the subgraph of $G$ induced by the vertices labeling the leaves of $T$ that are descendants of $x$.
For each node $x$ of $T$, the algorithm will compute the upper clique transversal number $\tau_c^+(G_x)$.
In particular, when $x = r$ is the root of $T$, the graph $G_x$ equals to the whole graph $G$ and hence $\tau_c^+(G)= \tau_c^+(G_r)$.

If $x$ is a leaf of $T$, then $G_x$ is a one-vertex graph and hence $\tau_c^+(G_x) = 1$.
If $x$ is an internal node corresponding to the disjoint union operation, with children $y$ and $z$, then the graph $G_x$ is the disjoint union of graphs $G_y$ and $G_z$ and
$\tau_c^+(G_x) = \tau_c^+(G_y) + \tau_c^+(G_z)$.
Finally, if $x$ is an internal node corresponding to the join operation, with children $y$ and $z$, then the graph $G_x$ is the join of graphs $G_y$ and $G_z$ and
$\tau_c^+(G_x) = \max\{\tau_c^+(G_y),\tau_c^+(G_z)\}$.

The correctness of the algorithm follows from the correctness of the recurrence relations for computing the value of $\tau_c^+(G_x)$ for an internal node $x$ from the already computed values of $\tau_c^+(G_y)$ and $\tau_c^+(G_z)$, where $y$ and $z$ are the children of $x$.
These follow from \Cref{cor:disjoint-union,cor:join} for the case of join and disjoint union, respectively.

The cotree of $G$ can be computed in linear time~\cite{MR807891} and the recursive computation of the value $\tau_c^+(G_x)$ takes constant time at each node $x$ of the cotree $T$.
Therefore, the algorithm runs in linear time.
\end{proof}

\end{document}